\documentclass[11pt]{amsart}
\usepackage{amsmath,amsfonts,amsthm,amssymb,amscd}
\usepackage{mathrsfs, textcomp}

\usepackage{textgreek}
\usepackage{color}
\usepackage{enumitem}

\newtheorem{theorem}{Theorem}[section]
\newtheorem{lemma}[theorem]{Lemma}
\newtheorem{proposition}[theorem]{Proposition}
\newtheorem{corollary}[theorem]{Corollary}

\theoremstyle{definition}

\newtheorem{definition}[theorem]{Definition}

\newtheorem{example}[theorem]{Example}

\newtheorem{remark}[theorem]{Remark}

\newcommand{\Chi}{{\raise2pt\hbox{$\chi$}}}
\newcommand{\ssm}{\smallsetminus}

\newcommand{\ra}{\rightarrow}

\newcommand{\N}{\mathbb{N}}

\newcommand{\R}{\mathbb{R}}

\newcommand{\Z}{\mathbb{Z}}

\newcommand{\fJ}{{\mathfrak J}}
\newcommand{\fN}{{\mathfrak N}}

\newcommand{\cI}{\mathcal{I}}

\newcommand{\cP}{\mathcal{P}}

\newcommand{\Prufer}{Pr\"{u}fer }

\newcommand{\ol}{\overline}
\DeclareMathOperator{\Spec}{Spec}
\DeclareMathOperator{\Max}{Max}

\DeclareMathOperator{\Min}{Min}
\DeclareMathOperator{\Ann}{Ann}

\DeclareMathOperator{\reg}{reg}
\DeclareMathOperator{\areg}{areg}

\newcommand\medcap{{\,\raise1.5pt\hbox{{$\scriptstyle{\bigcap}$}}\,}}
\newcommand\medcup{{\,\raise1.5pt\hbox{{$\scriptstyle{\bigcup}$}}\,}}

\usepackage{stackengine}

\title{Complemented Elements in Commutative Rings}

\author[P. Bhattacharjee]{Papiya Bhattacharjee}
\address{Department of Mathematics and Statistics, Florida Atlantic University, Boca Raton, FL 33431, USA}
\email{pbhattacharjee@fau.edu}

\author[W.Wm. McGovern]{Warren Wm. McGovern}
\address{H. L. Wilkes Honors College, Florida Atlantic University, 5353 Parkside Dr., Jupiter, FL 33458}
\email{warren.mcgovern@fau.edu }

\author[Y. Zhou]{Yiqiang Zhou}
\address{Department of Mathematics and Statistics, Memorial University of Newfoundland, St.John's, NL A1C 5S7, Canada}
\email{zhou@mun.ca}

\keywords{commutative ring, complemented rings, trivial extension}

\subjclass[2020]{Primary: 13F99, 13A99; Secondary: 13D99.}

\begin{document}

\begin{abstract}
In what follows we generalize the notion of a complemented ring to rings that are not necessarily reduced. We then determine how our concepts fit in with other well-known classes of rings.

\end{abstract}
\maketitle
\thispagestyle{empty}

\section{Introduction}

Throughout, we assume that all rings are commutative with identity different than 0. For a ring $R$, the classical (aka total) ring of quotients of $R$ is denoted by $q(R)$. We denote the space of prime ideals, minimal prime ideals, and maximal prime ideals of $R$ by $\Spec(R), \Min(R)$, and $\Max(R)$, respectively. When equipped with the hull-kernel topology, $\Min(R)$ becomes a zero-dimensional Hausdorff space, while $\Max(R)$ is a compact space.

We denote the set of units and regular elements by $U(R)$ and $\reg(R)$, respectively. We let $\fN(R)$ and $\fJ(R)$ denote the nilradical and Jacobson radical of $R$. Recall that $\fN(R)$ equals the intersection of all minimal prime ideals.  There has been a lot of interest over the last couple of decades in determining when $q(R)$ is a von Neumann regular ring. It is well-known that this occurs if and only if $R$ has the property that for each $a\in R$ there is a $b\in R$ such that $ab=0$ and $a+b\in \reg(R)$. Such an element $a\in R$ is said to be {\bf complemented}, and the pair $a,b$ is called a complementary pair. The ring $R$ is said to be complemented when every element of $R$ is complemented. We observe that complemented rings are necessarily reduced, due to the fact that the only nilpotent complemented element is 0.

The structure of the paper is as follows. In the remainder of this section we share some results and examples related to complemented elements and rings. In Section 2, we define the class of semi-complemented rings and compare this class to some other classes of rings that have been studied before. The third section is for studying products of these different kinds of rings. In the fourth section, we explore how to construct rings having the different properties.

Here is the main theorem that characterizes complemented rings.

\begin{theorem}\cite[Theorem 4.5]{huckaba}\label{complemented}
Suppose $R$ is a commutative ring with identity. The following statements are equivalent.
\begin{enumerate}[label={\rm \arabic*.}, nolistsep]
\item $R$ is complemented.
\item $q(R)$ is von Neumann regular.
\item $\Min(R)$ is compact and $R$ has Property $A$.
\item $\Min(R)$ is compact and $R$ satisfies the a.c.
\end{enumerate}
\end{theorem}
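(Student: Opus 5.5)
Note first that conditions 3 and 4, as stated, say nothing explicit about reducedness, while condition 1 forces $R$ to be reduced. So I would read all four conditions under the standing hypothesis that $R$ is reduced, which is the setting of Huckaba's theorem. In that setting Property A is: every finitely generated ideal consisting of zero-divisors has a nonzero annihilator. The a.c. (annihilator condition) is: for all $a,b$ there is $c$ with $\Ann(a,b)=\Ann(c)$. The plan is the cycle $1\Rightarrow 2\Rightarrow 3\Rightarrow 4\Rightarrow 1$. Throughout I use the standard facts for reduced rings: $\Ann(a)=\bigcap\{P\in\Min(R): a\notin P\}$, and $a\in P\in\Min(R)$ iff $\Ann(a)\not\subseteq P$.

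For $1\Rightarrow 2$, take $x=a/s\in q(R)$ and a complement $b$ of $a$. Put $e=a/(a+b)$. Then $e^2=a^2/(a+b)^2=a(a+b)/(a+b)^2=e$, using $ab=0$. Also $ae=a$, so $x$ is a unit times an idempotent, namely $x=(a/s+(1-e))\cdot e$ after a small check. Concretely, $a/s+(1-e)$ is invertible because its products with $e$ and $1-e$ are units in the respective corners; this last point uses $a+b$ regular. Hence $q(R)$ is von Neumann regular.

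For $2\Rightarrow 3$, first use that $\Min(q(R))=\Spec(q(R))$ is compact when $q(R)$ is von Neumann regular. Contraction gives a homeomorphism $\Min(q(R))\to\Min(R)$, since the zero-divisors form a union of minimal primes in a reduced ring. So $\Min(R)$ is compact. For Property A, let $I=(a_1,\dots,a_n)\subseteq Z(R)$. Each $a_i$ equals $e_i$ times a unit in $q(R)$ with $e_i$ idempotent, and the idempotent $e=1-\prod(1-e_i)$ generates $Iq(R)$. If $e=1$, then $I$ contains an element regular in $q(R)$, namely some $R$-combination $\sum r_ia_i$ that is a unit of $q(R)$; that element is regular in $R$, a contradiction. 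Hence $1-e\neq 0$, and clearing denominators yields a nonzero element of $R$ killing $I$.

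For $3\Rightarrow 4$ and $4\Rightarrow 1$ I would work through compactness. The key lemma (Mewborn/Henriksen–Jerison style) is that $\Min(R)$ is compact iff for each $a$ there is $b$ with $ab=0$ and $\Ann(a,b)$ contained in no minimal prime. Equivalently, $V(a)\cap\Min(R)=D(b)\cap\Min(R)$ is clopen with $a,b$ "complementary on $\Min$". Given this, $\Ann(a+b)=\Ann(a)\cap\Ann(b)$: if $r(a+b)=0$ then $ra^2=0$, so $ra=0$ by reducedness. That intersection is contained in no minimal prime, and hence is $0$ since $R$ is reduced. So $a+b$ is regular, giving $4\Rightarrow 1$ and even $3\Rightarrow 1$ directly. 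I would therefore reorganize as $3\Rightarrow 1$ and $4\Rightarrow 1$ via this lemma, plus $2\Rightarrow 4$: in a von Neumann regular $q(R)$, $\Ann(a,b)$ is generated by $1-e$ for the idempotent $e$ of $(a,b)$, and $\Ann(a,b)=\Ann(c)$ holds for $c=$ the numerator of $e$.

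The main obstacle is the compactness lemma itself, specifically producing $b$ from compactness. I would cover the clopen set $V(a)\cap\Min(R)$ by basic sets $D(b_j)\cap\Min(R)$, with $b_j\in\Ann(a)$, and extract a finite subcover. I would then need a single $b$, and here Property A or the a.c. enters. Using the a.c., pick $c$ with $\Ann(b_1,\dots,b_k)=\Ann(c)$; then $b=c$ works, after noting $\Ann(a)$ and $c$ can be arranged compatibly. Using Property A, the ideal $(a,b_1,\dots,b_k)$ has no nonzero annihilator, since it lies in no minimal prime. Hence it contains a regular element, which can be written as $ra+\sum s_jb_j$, and I take $b=\sum s_jb_j$.
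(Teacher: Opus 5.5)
The paper gives no proof of this statement. It quotes Huckaba and points to the notes there, so your proposal has to stand on its own.

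Your reading is the right one. Conditions 3 and 4 only imply 1 and 2 for reduced rings, which is the setting of the cited result, and as printed the statement is false. For example, $\Z/4\Z$ has a one-point $\Min$. It satisfies Property $A$, since the only dense ideal is the ring itself, and the a.c., since every ideal is principal. Yet it is not complemented, and it equals its own non-reduced classical ring of quotients.

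Your arguments for $1\Rightarrow 2$, $2\Rightarrow 3$ and $2\Rightarrow 4$ are correct, and the overall scheme is the standard one. For $1\Rightarrow 2$ there is a quicker route: $x^2\cdot\frac{s}{a+b}=x$ for $x=\frac{a}{s}$, because $a^2=a(a+b)$.

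The part that needs repair is your key lemma.

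\textbf{Wording.} Read literally, ``$\Ann(a,b)$ is contained in no minimal prime'' forces $a=b=0$ in a reduced ring. What you mean is that $(a,b)$ lies in no minimal prime. Equivalently, $\Ann(a,b)$ lies in \emph{every} minimal prime and hence in $\fN(R)=0$. The same reversal occurs in ``contained in no minimal prime, and hence is $0$''.

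\textbf{Substance.} Even with the wording corrected, the lemma is false. Combined with your computation $\Ann(a+b)=\Ann(a)\cap\Ann(b)$, it would say that every reduced ring with compact minimal spectrum is complemented. Quentel's example, cited in the introduction, refutes this. So the sentence ``giving $4\Rightarrow1$ and even $3\Rightarrow1$ directly'' is wrong as written.

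What compactness alone yields is the characterization the paper quotes in Section 4. For each $a$ there is a finitely generated $J=(b_1,\dots,b_k)\subseteq\Ann(a)$ with $aR+J$ dense, and this is exactly what your finite-subcover argument produces. Collapsing $J$ to a single element is precisely where Property $A$ or the a.c. is needed, and your last paragraph does this correctly:
\begin{itemize}
\item With the a.c., extended to finitely many elements by induction, $a\in\Ann(b_1,\dots,b_k)=\Ann(c)$ gives $ac=0$ automatically, so there is nothing to ``arrange''. Also $\Ann(a)\cap\Ann(c)=\Ann(a,b_1,\dots,b_k)=0$.
\item With Property $A$, let $ra+b$ be regular where $b=\sum s_jb_j$. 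Then $ab=0$ and $\Ann(a)\cap\Ann(b)\subseteq\Ann(ra+b)=0$.
\end{itemize}
In both cases $\Ann(a+b)=\Ann(a)\cap\Ann(b)=0$.

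So state the lemma in the finitely-generated form and obtain the single complement from Property $A$ or the a.c. With that change, your proofs of $3\Rightarrow1$ and $4\Rightarrow1$ are complete.
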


In the notes for Section 4 of \cite{huckaba}, a nice account is given of the proofs of Theorem \ref{complemented}.

\begin{definition}
(1) $R$ is said to have {\bf Property $A$} if every finitely generated dense ideal contains a regular element.

(2) $R$ is said to satisfy the {\bf annihilator condition} (or a.c.) if for every $a,b\in R$ there is a $c\in R$ such that $\Ann(a,b)=\Ann(c)$.
\end{definition}

Observe that it follows that a von Neumann regular ring is complemented. It is well-known that the von Neumann regular rings are precisely the zero-dimensional reduced rings. Element-wise we have the following definition.

\begin{definition}
Let $x\in R$. We say that $x$ is a {\bf von Neumann regular} element if there is an $a\in R$ such that $x^2a=x$.
\end{definition}

Of course, the ring $R$ is von Neumann regular when every element is von Neumann regular. Now, to understand zero-dimensionality we recall that an element $x\in R$ is {\bf $\pi$-regular} if some power of $x$ is von Neumann regular. We recall a characterization of  zero-dimensional rings.

\begin{theorem}\cite[Theorem 6]{arapovic} The following statements are equivalent for a ring $R$.
\begin{enumerate}[label={\rm \arabic*.}, nolistsep]
\item The ring $R$ is zero-dimensional.
\item Every element of $R$ is $\pi$-regular.
\item $R/\fN(R)$ is von Neumann regular.
\end{enumerate}
\end{theorem}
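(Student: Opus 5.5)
I will prove the cycle $1\Rightarrow 3\Rightarrow 2\Rightarrow 1$, using only that prime ideals of $R/\fN(R)$ correspond to primes of $R$ containing $\fN(R)$, i.e.\ to all primes, since $\fN(R)$ lies in every prime. The fact that von Neumann regular rings are exactly the reduced zero-dimensional rings is taken as known.

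For $1\Rightarrow 3$, note first that $R/\fN(R)$ is reduced. It is zero-dimensional because its spectrum is homeomorphic to $\Spec(R)$ via this correspondence, and it preserves inclusions, so chain lengths agree. Hence $R/\fN(R)$ is von Neumann regular.

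For $3\Rightarrow 2$, take $x\in R$. Regularity of $R/\fN(R)$ gives $a\in R$ with $x^2a-x\in\fN(R)$. Put $e=xa$. Modulo $\fN(R)$ the element $\bar e$ is idempotent, since $\bar x^2\bar a=\bar x$ gives $\bar e^2=\bar x^2\bar a^2=\bar x\bar a=\bar e$. Rather than lifting idempotents, I will argue directly with the nilpotent $n=x-x^2a=x(1-xa)$, say $n^k=0$. Then
\[
x^k(1-xa)^k=0.
\]
Expand $(1-xa)^k=1-xa\,p(x,a)$, where $p$ is a polynomial. This gives $x^k=x^{k+1}a\,p$. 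Setting $c=ap$, we have $x^k=x^{k+1}c$, and iterating yields $x^k=x^{2k}c^k$. So $x^k$ is von Neumann regular, hence $x$ is $\pi$-regular.

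For $2\Rightarrow 1$, suppose $P\subsetneq Q$ are primes and pick $x\in Q\setminus P$. By hypothesis $x^n=x^{2n}b$ for some $n$ and $b$, so
\[
x^n(1-x^nb)=0\in P.
\]
Since $x\notin P$, this forces $1-x^nb\in P\subseteq Q$. But $x^nb\in Q$ as well, so $1\in Q$, a contradiction. Hence every prime is both minimal and maximal, and $R$ is zero-dimensional.

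The only step needing care is $3\Rightarrow 2$, where a nilpotent error term has to be absorbed. The factorization $x^k(1-xa)^k=0$ handles this cleanly.
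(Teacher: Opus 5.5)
Your cycle $1\Rightarrow 3\Rightarrow 2\Rightarrow 1$ is correct. The paper gives no proof of its own here; it only cites Arapovi\'c.

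Your $3\Rightarrow 2$ step is the same computation the paper later imports from Huckaba's Theorem~3.1, in its proof of the product criterion for zero-dimensionality. There, $(r-r^2x)^n=0$ gives $r^{n+1}y=r^n$, and then $r^n=r^{2n}y^n$. Your $2\Rightarrow 1$ is the standard direct check and is fine as written. The idempotent $e=xa$ you introduce is never used and can be dropped.

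The one place you rely on outside input is $1\Rightarrow 3$. Apart from the formal transfer of dimension to $R/\fN(R)$, all the content sits in the quoted fact that reduced zero-dimensional rings are von Neumann regular. That fact is just the reduced case of the implication being proved. Quoting it is acceptable, since the paper records it as well known, but a short argument makes the proof self-contained.

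Let $R$ be zero-dimensional and $x\in R$. The set $\{x^n(1-xy): n\ge 0,\ y\in R\}$ is multiplicatively closed, because $(1-xy)(1-xy')=1-x(y+y'-xyy')$. If it missed $0$, some prime $P$ would be disjoint from it. Then $x\notin P$, and since $P$ is maximal there is $y$ with $1-xy\in P$, a contradiction.

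Hence $x^n(1-xy)=0$ for some $n\ge 1$ and some $y$; multiply by $x$ if the witness has $n=0$. This already gives $x^n=x^{n+1}y$, and hence $\pi$-regularity by your iteration. If moreover $R$ is reduced, then $(x(1-xy))^n=x^n(1-xy)^n=0$ forces $x(1-xy)=0$, that is, $x=x^2y$. Applying this to $R/\fN(R)$ proves the fact you quoted.
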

\vspace{.1in}

\begin{lemma}
Let $R$ be a ring and $x\in R$. If $x$ is a von Neumann regular element, then $x$ is complemented.
\end{lemma}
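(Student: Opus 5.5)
The plan is to manufacture an idempotent from the von Neumann regular equation and use its complement as the partner of $x$. Suppose $x^2a=x$. Put $e=xa$. Then
\[
e^2=x^2a^2=(x^2a)a=xa=e,
\]
so $e$ is idempotent. Moreover $ex=x^2a=x$, so $x\in eR$. The candidate complement is $b=1-e$.

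First, $xb=x-xe=x-x=0$, so the pair $x,b$ satisfies the annihilation requirement.

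Next we show that $x+b=x+1-e$ is regular. Take $r\in R$ with $r(x+1-e)=0$ and multiply by $e$. Since $ex=x$ and $e(1-e)=0$, this gives $rx=0$. Then $re=rxa=0$. Substituting back, $0=rx+r-re=r$. Hence $x+b\in\reg(R)$, and $x,1-e$ is a complementary pair.

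The only point needing any care is this regularity check. It reduces to the two identities $ex=x$ and $e=xa$, which let us split the equation along the idempotent $e$. One could even verify that $x+1-e$ is a unit, with inverse $a e+1-e$ (check: $(x+1-e)(ae+1-e)=xae+(1-e)=e+1-e=1$). Regularity is all that is needed, however, so the argument above suffices and there is no real obstacle.
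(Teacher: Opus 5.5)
Your proof is correct and is essentially the paper's argument: the same idempotent $e=xa$, the same complement $1-e$, and regularity of $x+1-e$ obtained by multiplying the annihilation equation through by $e$. Your extra remark that $x+1-e$ is in fact a unit, with inverse $ae+1-e$, is a slight strengthening that the paper does not state.
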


\begin{proof}
Choose $a\in R$ such that $x^2a=x$. Then $ax$ is idempotent, and hence so is $1-ax$. We claim that $1-ax$ is a complement of $x$.
Clearly,
$$x(1-ax)= x-x^2a=0.$$
Next, we show that $x+(1-ax)\in \reg(R)$. Suppose that $s(x+1-ax)=0$. Then $sx=s(ax-1)$, and so
$$sx = sx(ax)=s(ax-1)(ax) = 0.$$
This also means that $s(1-ax)=0$ and $s(ax)=0$. But
$$s=s(ax+ 1- ax) =0.$$

\end{proof}

Now, as we pointed out before, a non-zero nilpotent element cannot be complemented and therefore, since nilpotents are $\pi$-regular it follows that not every $\pi$-regular element is complemented.

We end the section with some examples of complemented rings in the hopes of demonstrating the importance of this class of rings.

\begin{example}
Let $X$ be a completely regular Hausdorff space and let $C(X)$ denote the ring of real-valued continuous functions equipped with pointwise addition and multiplication. The ring $C(X)$ is always reduced and in two independent articles (\cite{ls} and \cite{hw}) the question of when $C(X)$ is a complemented ring was answered. The ring $C(X)$ is complemented if and only if $X$ is a cozero-complemented space. We don't feel the need to get into the topological discussion, but we would like to add that any metric space is cozero-complemented. In particular, $C(\R)$ is a complemented ring. On the other hand, if $D$ is an uncountable discrete space and $\alpha D$ is its one-point compactification, then $C(\alpha D)$ is not complemented. For more information on $C(X)$ see \cite{gj}.
\end{example}

\begin{example}
In \cite{epstein}, the author studies the $A+B$ construction and characterizes when this ring is complemented. Recall that one starts with a collection of prime ideals of $A$, say $\cP$, with the property that $\cap \cP=\{0\}$. Taking the index set $\cI=\cP\times \N$ and for each $i=(P,n)$ we let $P_i=P$ and let $B$ be the direct sum $\oplus_{i\in I} A/P_i$. We take the ring $A$ and view it as a subring of $\Pi_{i\in I} A/P_i$ and then $A+B$ is the ring whose elements are of the form $a+b$ for some $a\in A$ and $b\in B$. Then $A+B$ is complemented if and only if $A$ is complemented and $\cP\subseteq \Min(A)$. One can also form $A+Q(B)$ where $Q(B)=\oplus_{i\in I} q(A/P_i)$, and the result extends to $A+Q(B)$.

\end{example}

\begin{example}
Recall that McCoy's Theorem states that for any commutative ring $R$, the polynomial ring $R[x]$ always satisfies Property $A$. Therefore, $R[x]$ is complemented if and only if $\Min(R)$ is compact.

There are rings that have Property $A$ that do not satisfy the a.c., and vice-versa (see \cite{lucas86}). Quentel's example \cite[Example 6, Section 27]{huckaba} has a compact minimal prime spectrum while not satisfying the a.c. Hence, Quentel's example $Q$ does not have Property $A$, nor is its classical ring of quotients von Neumann regular. But $Q[x]$ is complemented.
\end{example}
\vspace{.3in}

\section{Complemented Elements}

So far it should be apparent that all regular elements as well as von Neumann regular elements are complemented, and that non-zero nilpotent elements are not complemented. It is this latter fact that is one of the main motivations for studying semi-complemented rings. Here are some ways to create complemented elements.

\begin{lemma}\label{MainLemma}
Let $R$ be a ring, $a,a',b\in R$, and $s\in \reg(R)$.
\begin{enumerate}
\item Suppose $x\in\fN(R)$. Then $x$ is complemented if and only if $x=0$.
\item The sum and difference of a regular and a nilpotent is regular.
\item The element $a$ is complemented with complement $b$ if and only if $sa$ is complemented with complement $b$.
\item If $a, a'\in R$ are complemented with a common complement $b$, then $aa'$ is complemented with complement $b$.
\item If $a$ is complemented with complement $b$, then, for all $n\ge 1$ and $m\ge 1$, $a^n$ is complemented with  complement $b^m$.
\item If $b$ is a complement of $a$ and $a+bb'$ is complemented with $bb'\in {\rm nil}(R)$, then $bb'=0$.
\end{enumerate}
\end{lemma}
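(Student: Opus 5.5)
We prove the six items in turn. The one tool used throughout is the elementary fact that if $r\in\reg(R)$ and $rt$ is nilpotent then $t$ is nilpotent. Indeed, if $(rt)^n=r^nt^n=0$, then $t^n=0$ because $r^n$ is regular. We also use that a product of regular elements is regular.

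For (2), let $s\in\reg(R)$ and $n\in\fN(R)$. Suppose $t(s+n)=0$. Then $ts=-tn$, and multiplying by powers of $s+n$ and expanding binomially gives $ts^k\in t\fN(R)$ for every $k$. A cleaner route is to write $t s^{k}=(-1)^k t n^{k}$, which follows by induction from $ts=-tn$. Taking $k$ with $n^k=0$ gives $ts^k=0$, so $t=0$ because $s^k$ is regular. The difference is handled by replacing $n$ with $-n$.

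For (1), suppose $x$ is nilpotent with complement $b$. Then $x+b$ is regular and $xb=0$. By (2), $b=(x+b)-x$ is regular. Then $xb=0$ forces $x=0$. The converse holds because $0$ has complement $1$.

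For (3), note that $ab=0$ if and only if $sab=0$, since $s$ is regular. Put $r=a+b$ and $r'=sa+b$. If $r$ is regular, suppose $tr'=0$. Multiplying by $b$ and using $ab=0$ gives $tb^2=0$. Multiplying instead by $a$ gives $tsa^2=0$, hence $ta^2=0$. Then $tr^2=t(a^2+b^2)=0$, so $t=0$. Thus $r'$ is regular. The converse is the same computation with the roles reversed: if $tr=0$, multiplying by $b$ and by $a$ again gives $tb^2=0$ and $ta^2=0$, so $tr'^2=t(s^2a^2+b^2)=0$. This "square-kills" trick is the main point of the proof, and I expect it to handle (4) and (5) as well.

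For (4), we have $aa'b=0$. Suppose $t(aa'+b)=0$. Multiplying by $b$ gives $tb^2=0$. Multiplying by $aa'$ gives $ta^2a'^2=0$. Since $(a+b)(a'+b)=aa'+b^2$ is regular, we get $t(aa'+b)^2\cdot$ something; more directly, $t(aa'+b^2)^2=t(a^2a'^2+b^4)=0$, so $t=0$.

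For (5), we have $a^nb^m=0$. If $t(a^n+b^m)=0$, then multiplying by $a^n$ and by $b^m$ gives $ta^{2n}=0$ and $tb^{2m}=0$. Raising the regular element $a+b$ to a high power $N$, we get $(a+b)^N=a^N+b^N$ because $ab=0$. Take $N$ to be a common multiple of $2n$ and $2m$. Then $t(a+b)^N=0$, so $t=0$.

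For (6), let $c$ be a complement of $a+bb'$, so $c(a+bb')=0$ and $a+bb'+c$ is regular. By (2), $a+c$ is regular. We want to show $bb'=0$. From $ab=0$ we get $(a+bb')bb'=(bb')^2$. I would instead argue that $bb'c=-ac$ and multiply by $b$ to get $b^2b'c=0$. Choose $k$ so that $(bb')^k=0$, and attempt induction downward on the power of $bb'$ in order to show that $(bb')^j(a+c)=0$. This step is the main obstacle. Concretely, $(bb')^j a=0$ for $j\ge1$, and $(bb')^jc=-(bb')^{j-1}ac=0$ for $j\ge2$. So $(bb')^j(a+c)=0$ for $j\ge 2$, and hence $(bb')^2=0$. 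For $j=1$ we have $bb'(a+c)=bb'c=-ac$. Moreover $(bb')^2=(a+bb')bb'$, so $(a+bb')$ annihilates $c$ and the expected conclusion is that $bb'\cdot(a+bb'+c)=-ac+(bb')^2=-ac$. To finish, one needs $ac=0$. It follows from $c(a+bb')=0$ after multiplying by $a$: $a^2c=0$. Then, with $u=a+bb'+c$ regular, $acu=a^2c+abb'c+ac^2=ac^2$, and iterating the same idea should force $ac=0$, hence $bb'u=0$, hence $bb'=0$.
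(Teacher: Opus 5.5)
Items (1)--(5) are correct. In (3) you actually prove the statement as written ($sa+b$ regular), whereas the paper's forward direction only exhibits $sb$ as a complement of $sa$. In (5) your direct argument via $(a+b)^N=a^N+b^N$ replaces the paper's appeal to repeated use of (4).

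There is a genuine gap at the end of (6). Everything you derive there ($a+c$ regular, $(bb')^2=0$, $bb'c=-ac$, $a^2c=0$, $acu=ac^2$) uses only four facts: $ab=0$, $(a+bb')c=0$, regularity of $u=a+bb'+c$, and nilpotence of $bb'$. These facts do not imply $ac=0$. Iterating only gives $ac^ju=ac^{j+1}$, which never terminates, since $c$ need not be nilpotent.

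Concretely, take $R=k\times k[t]/(t^2)$ with $k$ a field and $\varepsilon$ the image of $t$. Put $a=(1,-\varepsilon)$, $b=(0,\varepsilon)$, $b'=1$, $c=(0,1)$. Then:
\begin{itemize}
\item $ab=0$ and $(a+bb')c=(1,0)(0,1)=0$;
\item $a+bb'+c=1$, and $bb'$ is nilpotent;
\item $a+c$ is a unit, $(bb')^2=0$, $bb'c=-ac$ and $a^2c=0$.
\end{itemize}
Yet $ac=(0,-\varepsilon)\neq 0$ and $bb'\neq 0$. The only hypothesis that fails is regularity of $a+b=(1,0)$. So no argument that, like yours, never uses this hypothesis can reach $ac=0$ or $bb'=0$.

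The missing idea is to use that $b$ is a complement of $a$, i.e.\ that $a+b\in\reg(R)$. From $a^2c=0$ and $ab=0$ you get $(a+b)(ac)=a^2c+abc=0$, hence $ac=0$. Then $bb'c=-ac=0$, so $bb'(a+c)=bb'a+bb'c=0$. Regularity of $a+c$ gives $bb'=0$. This is exactly the paper's argument, and it also makes your detour through $(bb')^2=0$ unnecessary.
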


\begin{proof}
(1) Clearly, $0$ is complemented. Let $x\in \fN(R)$ and suppose that $y\in R$ satisfies $xy=0$ and $x+y\in\reg(R)$. Choose $n\in \N$ such that $x^n=0$. Then, by the Binomial Theorem and using that $xy=0$,
$$(x+y)^n = x^n+y^n=y^n,$$
whence $y\in \reg(R)$. It follows by regularity that $x=0$.
\vspace{.1in}

(2) Let $r\in \reg(R)$ and $x\in \fN(R)$. Let $n\in \N$ be such that $x^n=0$. Observe that
$$(r-x)(r^{n-1}+xr^{n-2}+x^2r^{n-3}+\cdots + x^{n-2}r+x^{n-1}) = r^n-x^n =r^n.$$
Since any factor of a regular ring is regular, we gather that $r-x\in\reg(R)$.
\vspace{.1in}

(3) Suppose $a$ is complemented with complement $b$. Then since $(sa)(sb)=0$ and $sa+sb=s(a+b)\in\reg(R)$ it follows that $sa$ is complemented with $sb$ as its complement.
\vspace{.1in}

Conversely, suppose that $sa$ is complemented with complement $b$. We show that $a$ is complemented with $b$ as a complement. Now, $s(ab)=(sa)b=0$ and so by regularity $ab=0$. Now, let $y\in R$ satisfy $y(a+b)=0$; $ya=-yb$ and so $ya^2=0=yb^2$.
Then,
$$y(sa+b)^2= ys^2a^2+2ysab+yb^2=0;$$
whence $y=0$ since $(sa+b)^2\in \reg(R)$.
\vspace{.1in}

(4) Suppose that $b$ is a common complement for $a,a'$. Then $(aa')b=0$ so it suffices to show that $aa'+b\in \reg(R)$. To that end, suppose that $(aa'+b)z=0$. Then $aa'z=-bz$, and so multiplying both sides by $b$ yields that $b^2z=0$. Since $(a+b)bz=b^2z=0$ it follows that $bz=0$, and hence also $aa'z=0$. Now, $aa'+b^2=(a+b)(a'+b)\in \reg(R)$, and so $(aa'+b^2)z=0$. Consequently, $aa'+b\in\reg(R)$
\vspace{.1in}

(5) This follows from repeated uses of (4).
\vspace{.1in}

(6) Let $z$ be a complement of $a+bb'$; so $(a+bb')z = 0$ and $a+bb'+z\in \reg(R)$. Since $bb' \in\fN(R)$, $a+z\in \reg(R)$ by (2). From $az = -bb'z$, we have $a^2z = 0$. Thus, $(a + b)az = 0$. By regularity, $az = 0$ and so also $bb'z = 0$. Hence $(a + z)bb' = 0$. Once again by regularity, $bb' = 0$.
\end{proof}
\vspace{.1in}

In \cite{branca}, the author, under the direction of the second author of the current paper, began the study of a generalization of complemented rings to rings with non-zero nilpotent elements. The idea behind the generalization had its roots in the article \cite{bmz} where generalized unit fusible rings were studied, a concept that deals with looking at elements outside of the Jacobson radical. Here are two variants of complemented rings.
\vspace{.1in}

\begin{definition}
1) The ring $R$ is said to be {\bf semi-complemented} if every non-nilpotent element is complemented. In other words, for every $x\in R\ssm \fN(R)$ there exists a $y\in R$ such that $xy=0$ and $x+y\in \reg(R)$.
\vspace{.1in}

2) The ring $R$ is said to be {\bf almost complemented} if $R/\fN(R)$ is a complemented ring.

\end{definition}
\vspace{.1in}

It follows that the ring $R$ is complemented if and only if it is a reduced semi-complemented ring if and only if it is a reduced almost complemented ring. It should be apparent that we shall not assume that our rings are reduced. Recall that an ideal $I$ of $R$ is said to be {\bf nil} if it is contained in the nilradical, and {\bf nilpotent} if some power of it is 0.

The proof of the following is straightforward. We shall use the ``bar" notation to denote the coset of an element in the factor ring $R/\fN(R)$.

\begin{lemma}\label{lem-a.c.}
The ring $R$ is almost complemented if and only if for every $x\in R\ssm\fN(R)$, there is a $y\in R$ such that $xy\in\fN(R)$ and $\overline{x+y}\in \reg(\overline{R})$. Moreover, if $R$ is almost complemented, then the only if part is true for all $x$.
\end{lemma}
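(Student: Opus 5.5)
The plan is to unwind the definition of complemented in $\overline{R}=R/\fN(R)$ and observe that the whole statement is a translation of that definition. Two facts about the reduction map will be used throughout. First, $\overline{xy}=\overline{x}\,\overline{y}$. Second, $\overline{xy}=\overline{0}$ if and only if $xy\in\fN(R)$. Since $\overline{R}$ is reduced, the nilradical of $\overline{R}$ is zero. Every element of $\overline{R}$ has the form $\overline{x}$ for some $x\in R$, and $\overline{x}=\overline{0}$ exactly when $x\in\fN(R)$.

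For the forward direction, assume $R$ is almost complemented, so $\overline{R}$ is complemented. Take any $x\in R$, not merely a non-nilpotent one; this will give the ``moreover'' clause at the same time. Since $\overline{x}$ is complemented in $\overline{R}$, there is an element of $\overline{R}$, which we lift to some $y\in R$, such that $\overline{x}\,\overline{y}=\overline{0}$ and $\overline{x}+\overline{y}\in\reg(\overline{R})$. The first condition translates to $xy\in\fN(R)$. The second condition reads $\overline{x+y}\in\reg(\overline{R})$.

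For the converse, assume the stated condition holds for every $x\in R\ssm\fN(R)$, and let $\overline{x}\in\overline{R}$ be arbitrary. If $\overline{x}=\overline{0}$, then $\overline{0}$ is complemented with complement $\overline{1}$; this is also Lemma~\ref{MainLemma}(1) applied in $\overline{R}$. Otherwise $x\notin\fN(R)$, and the hypothesis supplies $y$ with $xy\in\fN(R)$ and $\overline{x+y}$ regular. Then $\overline{y}$ is a complement of $\overline{x}$ in $\overline{R}$. Hence every element of $\overline{R}$ is complemented, so $\overline{R}$ is complemented and $R$ is almost complemented.

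There is no real obstacle here. The only points needing care are the representative-independence of the translation and the separate treatment of $\overline{0}$ in the converse. Note that regularity is taken in $\overline{R}$, not in $R$, so no lifting of regular elements is needed.
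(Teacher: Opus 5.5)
Your proposal is correct and is exactly the direct unwinding of ``$\overline{R}$ is complemented'' that the paper has in mind when it calls the proof straightforward (the paper gives no further argument). Taking arbitrary $x$ in the forward direction to get the ``moreover'' clause, and disposing of $\overline{0}$ with the complement $\overline{1}$ in the converse, are the only points needed.
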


\begin{proposition}
If $R$ is semi-complemented, then $R$ is almost complemented.
\end{proposition}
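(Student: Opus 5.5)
Our plan is to verify the criterion of Lemma \ref{lem-a.c.}. Take $x\in R\ssm\fN(R)$. Since $R$ is semi-complemented, there is a $y\in R$ with $xy=0$ and $x+y\in\reg(R)$. Then $xy=0\in\fN(R)$, so the first half of the condition holds trivially. What remains is to show that $\overline{x+y}\in\reg(\overline{R})$. More generally, we will show that the image of any regular element $r$ of $R$ is regular in $\overline{R}=R/\fN(R)$.

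\textbf{Regular elements stay regular modulo the nilradical.} Suppose $r\in\reg(R)$ and $z\in R$ with $rz\in\fN(R)$. Then $(rz)^n=r^nz^n=0$ for some $n$. Since $r^n$ is regular (a product of regular elements is regular), we get $z^n=0$, so $z\in\fN(R)$. Hence $\overline{r}\,\overline{z}=0$ forces $\overline{z}=0$, which means $\overline{r}\in\reg(\overline{R})$.

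Applying this to $r=x+y$ shows that $\overline{x+y}$ is regular in $\overline{R}$. Lemma \ref{lem-a.c.} then gives that $R$ is almost complemented.

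\textbf{Alternative: verify the definition directly.} One could instead check that $\overline{R}$ is complemented. For $\overline{x}\neq 0$, the pair $\overline{x},\overline{y}$ is a complementary pair by the step above, since $\overline{x}\,\overline{y}=\overline{xy}=0$. For $\overline{x}=0$, the element $1$ serves as a complement. This avoids relying on Lemma \ref{lem-a.c.}, whose proof was omitted in the paper.

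There is no real obstacle here. The only point needing care is the passage from $rz$ nilpotent to $z$ nilpotent, which uses that powers of regular elements are regular.
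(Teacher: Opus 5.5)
Your proposal is correct and is essentially the paper's proof: the paper verifies directly that $\overline{R}$ is complemented using the pair $\overline{r},\overline{s}$ coming from a complement $s$ of a non-nilpotent $r$, which is your ``alternative'' (your main route through Lemma \ref{lem-a.c.} amounts to the same check). The only difference is that the paper takes for granted that a regular element of $R$ stays regular in $\overline{R}$ (it calls this obvious in Remark \ref{remark-reg}), whereas you prove it explicitly via $r^nz^n=0$.
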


\begin{proof}
Suppose $R$ is semi-complemented. In order to show that $R$ is almost complemented, let $\overline{r}\in \ol{R}$. Without loss of generality, we assume that $r\notin \fN(R)$. By hypothesis, there is an $s\in R$ such that $rs=0$ and $r+s\in \reg(R)$. It follows that
$$\ol{r} \cdot \ol{s}=\ol{rs}=\ol{0} $$
and
$$\ol{r}+\ol{s}=\ol{r+s} \in \reg(\ol{R}).$$
Therefore, $\ol{R}$ is complemented, i.e. $R$ is almost complemented.
\end{proof}

Our next result generalizes what is known for complemented rings.
\begin{proposition}\label{q(R)}
The ring $R$ is semi-complemented if and only if its classical ring of quotients $q(R)$ is semi-complemented.
\end{proposition}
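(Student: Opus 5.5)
We prove both directions using the standard facts about $q(R)$: every element of $q(R)$ has the form $a/s$ with $a\in R$ and $s\in\reg(R)$. Moreover, $\reg(q(R))=U(q(R))$, and an element $a\in R$ is regular in $R$ if and only if $a/1$ is regular (equivalently, a unit) in $q(R)$. We also use that $\fN(q(R))=\{x/s: x\in\fN(R),\ s\in \reg(R)\}$, so that $a/s$ is nilpotent in $q(R)$ if and only if $a\in\fN(R)$. This holds because $(a/s)^n=0$ means $a^n/s^n=0$, which forces $a^n=0$ since $R\to q(R)$ is injective.

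($\Rightarrow$) Suppose $R$ is semi-complemented and let $a/s\in q(R)$ be non-nilpotent. Then $a\notin\fN(R)$, so there is $b\in R$ with $ab=0$ and $a+b\in\reg(R)$. In $q(R)$ we have $(a/1)(b/1)=0$. Also $a/1+b/1=(a+b)/1$ is regular in $q(R)$: if $(y/t)(a+b)/1=0$ then $y(a+b)=0$ in $R$, so $y=0$. Thus $a/1$ is complemented in $q(R)$ with complement $b/1$. Since $1/s\in U(q(R))\subseteq\reg(q(R))$, Lemma \ref{MainLemma}(3), applied in the ring $q(R)$, shows that $a/s=(1/s)(a/1)$ is complemented with complement $b/1$.

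($\Leftarrow$) Suppose $q(R)$ is semi-complemented and let $a\in R\ssm\fN(R)$. Then $a/1$ is non-nilpotent in $q(R)$, so there is $b/t\in q(R)$ with $(a/1)(b/t)=0$ and $a/1+b/t\in\reg(q(R))$. Multiplying the complement by the unit $t/1$, set $c=b\in R$. Then $ac/1=0$, so $ac=0$ in $R$ by injectivity. Furthermore, $(a+c)/1=(t/1)\bigl(a/1+b/t\bigr)+(1-t)(a/1)$. This expression is awkward, so we argue directly with Lemma \ref{MainLemma}(3) instead. Since $t\in\reg(R)$, the element $t/1$ is a unit of $q(R)$. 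By Lemma \ref{MainLemma}(3) in $q(R)$, $a/1$ has complement $b/t$ if and only if $(t/1)(a/1)=ta/1$ has complement $b/t$, and then, by the forward direction of (3), $ta/1$ has complement $(t/1)(b/t)=b/1$. Hence $ta\cdot b=0$ in $R$ and $(ta+b)/1$ is a unit of $q(R)$, so $ta+b\in\reg(R)$. Thus $ta$ is complemented in $R$ with complement $b$, and Lemma \ref{MainLemma}(3), now applied in $R$ with $s=t$, gives that $a$ is complemented in $R$ with complement $b$.

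The only delicate point is this clearing of denominators in the reverse direction. It is handled by two applications of Lemma \ref{MainLemma}(3): once in $q(R)$ to move the denominator from the complement onto $a$, and once in $R$ to remove the regular factor $t$. What remains is the routine verification that regularity and nilpotence transfer correctly along $R\to q(R)$.
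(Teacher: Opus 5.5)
Your proof is correct and follows essentially the paper's route: nilpotence and regularity transfer along $R\to q(R)$, and in the reverse direction you clear the denominator $t$ to see that $ta$ is complemented by $b$ in $R$, then finish with Lemma \ref{MainLemma}(3). The paper does the denominator-clearing in one line via $(ta+b)/t=a/1+b/t$, and in the forward direction it simply takes $b/s$ as the complement of $a/s$; you could adopt both and drop the abandoned ``awkward expression'' false start and the muddled chaining of (3) inside $q(R)$.
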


\begin{proof}
Recall that if $\frac{a}{s}\in q(R)$ with $a\in R$ and $s\in \reg(R)$, then $\frac{a}{s}\in\fN(q(R))$ if and only if $a\in \fN(R)$.
\vspace{.1in}

Suppose that $R$ is semi-complemented and let $x\in q(R)\ssm \fN(q(R))$ and write $x=\frac{a}{s}$. Then $a\in R\ssm \fN(R)$ so that by hypothesis there is some $b\in R$ such that $ab=0$ and $a+b\in\reg(R)$. Set $y=\frac{b}{s}\in q(R)$. Then $xy=0$ and $x+y=\frac{a+b}{s}\in \reg(q(R))$. Thus, $q(R)$ is semi-complemented.
\vspace{.1in}

Conversely, suppose that $q(R)$ is semi-complemented and let $r\in R\ssm\fN(R)$. Then $\frac{r}{1}\in q(R)\ssm\fN(q(R))$ and so there is a complement $\frac{b}{s}$. Since $\frac{r}{1}\frac{b}{s}=0$, it follows that $rb=0$. Since $\frac{sr+b}{s}=r+\frac{b}{s}\in\reg(q(R))$ we gather that $sr+b\in \reg(R)$. Thus, $sr$ is complemented in $R$. Then (3) of Lemma \ref{MainLemma} assures us that $r$ is complemented in $R$. Consequently, $R$ is semi-complemented.
\end{proof}

\vspace{.1in}

\begin{remark}\label{remark-reg}
Now it should be obvious that if $r\in \reg(R)$, then $\ol{r}\in \reg(\ol{R})$. We find it interesting that the converse need not hold. This opens up some possibilities. In our research, we found that others have considered similar ideas; we put some of the previous work into its proper context.
\end{remark}
\vspace{.2in}

In the article \cite{ab}, the authors were interested in classifying when $q(R)$ is a zero-dimensional ring. We would like to add some new ways to characterize the situation. We would also like to put this class into its proper place between semi-complemented rings and almost complemented rings. We need to develop some theory first. In studying the difference between almost complemented and semi-complemented we have found the following type of element plays a pivotal role.

\begin{definition}
We let
$$\areg(R)=\{x\in R: \bar{x}\in \reg(\bar{R})\},$$ the set of elements whose factors are regular in $R/\fN(R)$. We call any element belonging to $\areg(R)$ {\bf aregular}. Notice that any aregular element is not nilpotent. Internally, an element $x\in R$ belongs to $\areg(R)$ precisely if $xa\in\fN(R)$ implies $a\in\fN(R)$. Observe that $\areg(R)$ is a saturated multiplicative subset of $R$.

By Lemma \ref{lem-a.c.}, we have the following reformulation: $R$ is almost complemented if and only if for every $x\in R$, there is a $y\in R$ such that $xy\in\fN(R)$ and $x+y\in\areg(R)$. This also leads to the observation, that we can define an element $a\in R$ to be {\bf almost complemented} if there is a $b\in R$ such that $ab\in\fN(R)$ and $a+b\in \areg(R)$. Right away, we can point out that all aregular elements and all nilpotent elements are almost complemented. With this definition in hand, a ring $R$ is almost complemented if and only if every element is almost complemented if and only if every non-nilpotent element is almost complemented.
\end{definition}

\begin{remark}
It has been pointed out to us that in a current project by Dube and Ighetedo \cite{di}, the authors define an element $x$ to be {\it prime to an ideal $I$} if $xa\in I$ implies $a\in I$. It can be shown that $\reg(R)$ and $\areg(R)$ are the elements that are prime to $\{0\}$ and $\fN(R)$, respectively.
\vspace{.1in}

We can describe the elements of $\areg(R)$ topologically. Recall that on the prime spectrum $\Spec(R)$, a basic open set for hull-kernel topology has the form $U(a)=\{P\in\Spec(R): a\notin P\}$. The collection $\{U(a):a\in R\}$ forms a base for this topology on $\Spec(R)$. Furthermore, we leave it to the interested reader to show that $x\in\areg(R)$ if and only if $U(a)$ is dense subset of $\Spec(R)$.
\end{remark}

Here is the main theorem characterizing when the classical ring of quotients of a ring is zero-dimensional. Condition 5. is new.

\begin{theorem}\cite[Theorem 2.2]{ab}\label{Thm-pi.c.}
Let $R$ be a commutative ring.  The following statements are equivalent.
\begin{enumerate}[label={\rm \arabic*.}, nolistsep]
\item The classical ring of quotients of $R$ is zero-dimensional.
\item For each $x\in R$ there is a $y\in R$ such that $xy\in \fN(R)$ and $x+y\in \reg(R)$.
\item For each $x\in R$ there is a $y\in R$ and a $n\in \N$ such that
$x^ny=0$ and $x^n + y$ is a regular element of $R$.
\item For each $x\in R$ some power of $x$ is complemented.
\item $R$ is almost complemented and $\reg(R)=\areg(R)$.
\end{enumerate}
\end{theorem}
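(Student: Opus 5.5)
Since 1.–4. are the equivalences of \cite[Theorem 2.2]{ab}, I would take those as known and add condition 5 to the chain, say by proving 2.$\Rightarrow$5. and 5.$\Rightarrow$2. For completeness I would also check the easy internal links. For 3.$\Leftrightarrow$4.: an element $x^n$ with complement $y$ is exactly the data in 3. For 3.$\Rightarrow$2.: if $x^ny=0$ then $(xy)^n=x^ny^n=0$, so $xy\in\fN(R)$. Also $(x+y)^n = x^n + y\cdot(\text{stuff})$, and I would argue regularity of $x+y$ as follows. Suppose $s(x+y)=0$. Then $sx=-sy$, so $sx^n=(-1)^nsy^n$. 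Multiplying by $y$ gives $sx^ny=0$, hence $sy^{n+1}=0$. Applying the same reasoning to $sy(x^n+y)=sx^ny+sy^2$ lets me shuffle powers until $s(x^n+y)^N=0$ for suitable $N$, which forces $s=0$. This is routine bookkeeping and I would not dwell on it.

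\textbf{2.$\Rightarrow$5.} Given $x$, condition 2 supplies $y$ with $xy\in\fN(R)$ and $x+y\in\reg(R)\subseteq\areg(R)$. The inclusion $\reg(R)\subseteq\areg(R)$ holds because if $r$ is regular and $ra$ is nilpotent, then $r^na^n=0$, so $a^n=0$. Hence $R$ is almost complemented, by the reformulation of Lemma \ref{lem-a.c.}. It remains to show $\areg(R)\subseteq\reg(R)$. Let $x\in\areg(R)$ and pick $y$ as in 2. Then $xy\in\fN(R)$, and aregularity of $x$ gives $y\in\fN(R)$. By Lemma \ref{MainLemma}(2), $x=(x+y)-y$ is a regular element minus a nilpotent, hence regular.

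\textbf{5.$\Rightarrow$2.} Given $x$, almost complementedness gives $y$ with $xy\in\fN(R)$ and $x+y\in\areg(R)$. The hypothesis $\areg(R)=\reg(R)$ then makes $x+y$ regular, which is exactly 2.

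The only mildly delicate point is the inclusion $\areg(R)\subseteq\reg(R)$ in 2.$\Rightarrow$5. The key observation there is that the partner $y$ of an aregular element is forced to be nilpotent, after which Lemma \ref{MainLemma}(2) finishes the argument. Everything else is definition-chasing.
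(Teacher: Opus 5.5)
Your proof is correct and is essentially the paper's. The direction 5.$\Rightarrow$2.\ is the same, and in 2.$\Rightarrow$5.\ you use the same key point, namely that the partner of an aregular element must be nilpotent. You apply it directly to condition 2 together with Lemma \ref{MainLemma}(2), whereas the paper applies it to the power form in condition 4 and concludes via $(x^n+a)^j=x^{nj}$.

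Your optional sketch of 3.$\Rightarrow$2.\ can be made precise with $N=n+1$: from $sx=-sy$ and $x^ny=0$ one gets $sy^{n+1}=0=sx^{n+1}$, while $(x^n+y)^{n+1}=x^{n(n+1)}+y^{n+1}$. As in the paper, though, it is enough to cite the reference for 1.--3.
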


\begin{proof}
Proof of the equivalence of 1. through 3. is in the cited reference. That 3. and 4. are equivalent is definitional.
\vspace{.1in}

2. $\Rightarrow$ 5. Clearly, 2. implies that $R$ is almost complemented. We show that every aregular element is regular. To that end, let $x\in \areg(R)$. It follows that $x\in R\ssm \fN(R)$ and so by 4., there is an $n\in\N$ and $a\in R$ such that $x^na=0$ and $x^n+a\in \reg(R)$. Since $x\in \areg(R)$, $a\in \fN(R)$; choose $j\in \N$ such that $a^j=0$. Then by the Binomial Theorem together with $x^na=0$ produce the equation
$$x^{jn}=x^{nj}+a^{nj}=(x^n+a)^j.$$
It follows that $x^{jn}\in \reg(R)$, and hence also $x\in \reg(R)$.
\vspace{.1in}

5. $\Rightarrow$ 2. We assume that $R$ is almost complemented and $\reg(R)=\areg(R)$. Let $x\in R$. If $x\in \fN(R)$, then setting $y=1$, we get that $xy\in\fN(R)$ and $x+y\in U(R)$. So without loss of generality, we assume that $x\in R\ssm \fN(R)$. Applying Lemma \ref{lem-a.c.}, we can choose $y\in R$ such that $xy\in\fN(R)$ and $x+y\in \areg(R)$. But then $x+y\in\reg(R)$.
\end{proof}

\begin{definition}
Similar to the concept of $\pi$-regular, we say an element $x\in R$ is {\bf $\pi$-complemented} if some power of $x$ is complemented. The ring $R$ is called $\pi$-complemented if every element is $\pi$-complemented. Therefore, $q(R)$ is zero-dimensional if and only if $R$ is $\pi$-complemented if and only if $q(R)$ is $\pi$-complemented.

We show that $a\in R$ is $\pi$-complemented if and only if there is some $b\in R$ such that $ab\in \fN(R)$ and $a+b\in \reg(R)$.
\end{definition}

\begin{lemma}
The element $a\in R$ is $\pi$-complemented if and only if there is some $b\in R$ such that $ab\in\fN(R)$ and $a+b\in \reg(R)$
\end{lemma}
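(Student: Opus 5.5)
We prove the two implications separately. The forward direction is a binomial-expansion argument. The backward direction needs a little more care: we replace $b$ by a suitable power and multiple of $b$ so that the product with a power of $a$ becomes exactly $0$, while regularity of the sum is kept.

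($\Rightarrow$) Suppose $a^n$ is complemented with complement $c$, so $a^nc=0$ and $a^n+c\in\reg(R)$. Set $b=c^{\,n}$. Then
$$(ab)^n=a^nc^{n^2}=0,$$
so $ab\in\fN(R)$. For regularity, expand $(a+c^n)^n$ with the Binomial Theorem. Every mixed term $a^ic^{nj}$ with $i+j=n$ and $i,j\ge1$ is nilpotent, because $(a^ic^{nj})^n$ is divisible by $a^nc$. Hence $(a+c^n)^n\equiv a^n+c^{n^2}$ modulo $\fN(R)$. By Lemma \ref{MainLemma}(5), $a^n$ is complemented with complement $c^n$, so $a^n+c^{n^2}$ is regular. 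Lemma \ref{MainLemma}(2) then makes $(a+c^n)^n$ regular, and a factor of a regular element is regular, so $a+b\in\reg(R)$.

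($\Leftarrow$) Suppose $ab\in\fN(R)$ and $a+b\in\reg(R)$. Choose $k$ with $(ab)^k=0$, so $a^kb^k=0$. We claim $a^k$ is complemented with complement $b^k$. The product condition $a^kb^k=0$ already holds, so only $a^k+b^k\in\reg(R)$ remains. Expanding $(a+b)^{2k-1}$, each term $a^ib^j$ with $i+j=2k-1$ has $i\ge k$ or $j\ge k$. So we can write
$$(a+b)^{2k-1}=a^kr+b^ks$$
for suitable $r,s\in R$.

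Now suppose $z(a^k+b^k)=0$. Multiplying by $a^k$ and using $a^kb^k=0$ gives $a^{2k}z=0$. Then $(a+b)^{2k}a^kz$ expands into terms each containing $a^{2k}$ or $a^kb^k$, so it equals $0$. Since $(a+b)^{2k}$ is regular, $a^kz=0$, and then $b^kz=-a^kz=0$. Therefore $(a+b)^{2k-1}z=a^krz+b^ksz=0$, and regularity of $(a+b)^{2k-1}$ gives $z=0$. Hence $a^k$ is complemented, i.e. $a$ is $\pi$-complemented.

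The main obstacle is this last regularity check. The approach is to first kill $a^kz$ using the regular element $(a+b)^{2k}$, and then use the decomposition of $(a+b)^{2k-1}$. Alternatively one could cite the equivalence of 2 and 3 in Theorem \ref{Thm-pi.c.}, but that only gives the ring-wide statement, so the elementwise argument above is needed.
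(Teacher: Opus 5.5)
Your proof is correct. The forward direction is the paper's argument, except that the paper uses the complement $c$ of $a^n$ itself as the witness. Indeed $(ac)^n=a^nc^n=0$, and $(a+c)^n$ differs from $a^n+c^n$ (regular by Lemma~\ref{MainLemma}(5)) only by multiples of $ac$, so replacing $c$ by $c^n$ is unnecessary. There is also a small slip: as written, ``$a^n$ is complemented with complement $c^n$, so $a^n+c^{n^2}$ is regular'' does not follow in one step. Apply (5) with $m=n^2$ directly instead.

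The real difference is in the backward direction. You and the paper use the same witness, $b^k$ as a complement of $a^k$, but the paper checks regularity in one line. Every mixed term of $(a+b)^k$ is a multiple of $ab$, hence nilpotent. So $a^k+b^k=(a+b)^k-x$ with $x\in\fN(R)$, and this is regular by Lemma~\ref{MainLemma}(2), the same tool you already used in the forward direction. Your annihilator chase through $(a+b)^{2k}$ and $(a+b)^{2k-1}$ is valid and self-contained, since it avoids (2). However, the step you flagged as the main obstacle is in fact immediate, and the paper's route makes the two directions symmetric.
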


\begin{proof}
Suppose $a^n$ is complemented and let $b\in R$ such that $a^nb=0$ and $a^n+b\in\reg(R)$. By (5) of Lemma \ref{MainLemma}, $a^n$ and $b^n$ are complemented. It follows that $a^nb^n=0$ and $a^n+b^n\in\reg(R)$. So, $ab\in \fN(R)$. Since $(a+b)^n=a^n+b^n+x$ with $x\in \fN(R)$, it follows from (2) of Lemma \ref{MainLemma} that $(a+b)^n$ is regular and therefore $a+b\in\reg(R)$.
\vspace{.1in}

Conversely, suppose that there is some $b\in R$ such that $ab\in\fN(R)$ and $a+b\in \reg(R)$. For some $n\in N$, $(ab)^n=0$. Consider the regular element $(a+b)^n = a^n+b^n+x$ for some nilpotent element $x$. It follows that $a^n+b^n\in\reg(R)$ by (2) of \ref{MainLemma}. Therefore, $a^n$ is complemented, and thus, $a$ is $\pi$-complemented.
\end{proof}

The following two corollaries to Theorem \ref{Thm-pi.c.} ought to be evident.

\begin{corollary}
1) If $R$ is semi-complemented, then $R$ is $\pi$-complemented.

2) If $R$ is $\pi$-complemented, then $R$ is almost complemented.
\end{corollary}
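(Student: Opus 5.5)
Both parts will be read off from Theorem \ref{Thm-pi.c.}, using condition 4 (some power of every element is complemented) for part 1) and condition 5 for part 2). By the remark after that theorem, $R$ being $\pi$-complemented is exactly condition 4.

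For 1), let $R$ be semi-complemented and take $x\in R$. There are two cases.
\begin{itemize}
\item If $x\notin\fN(R)$, then $x$ is complemented by the definition of semi-complemented. So $x^1$ is complemented and $x$ is $\pi$-complemented.
\item If $x\in\fN(R)$, then some power satisfies $x^n=0$. By Lemma \ref{MainLemma}(1), $0$ is complemented (take complement $1$). So $x$ is again $\pi$-complemented.
\end{itemize}
Hence every element is $\pi$-complemented, and $R$ satisfies condition 4 of Theorem \ref{Thm-pi.c.}. This also shows, via that theorem, that $q(R)$ is zero-dimensional.

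For 2), let $R$ be $\pi$-complemented. Then condition 4 holds, so condition 5 of Theorem \ref{Thm-pi.c.} holds, and in particular $R$ is almost complemented.

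As a direct check that avoids condition 5, take $x\in R$. The preceding lemma gives $y$ with $xy\in\fN(R)$ and $x+y\in\reg(R)$. By Remark \ref{remark-reg}, $\reg(R)\subseteq\areg(R)$. So $xy\in\fN(R)$ and $x+y\in\areg(R)$, and $R$ is almost complemented by the reformulation of Lemma \ref{lem-a.c.}.

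No step is a real obstacle. The only point needing care is the nilpotent case in 1), which is handled because $0$ is complemented.
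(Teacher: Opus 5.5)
Your proposal is correct and takes essentially the paper's approach. The paper gives no written proof; it only says both parts are evident corollaries of Theorem \ref{Thm-pi.c.}, and your argument (the definitional check for 1), then condition 4 $\Rightarrow$ condition 5 for 2)) is precisely that reading, with your direct check via the preceding lemma and $\reg(R)\subseteq\areg(R)$ matching the ``2.\ implies almost complemented'' step in the theorem's proof.
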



In comparison to \ref{MainLemma}, we have the following.
\begin{lemma}
1) Suppose that $a\in R, s\in\areg(R)$, and $sa$ is an almost complemented element of $R$. Then $a$ is an almost complemented element of $R$.

2) Then $a\in \reg(R)$ if and only if $a\in areg(R)$ and $a$ is complemented.
\end{lemma}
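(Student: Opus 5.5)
For part 1) I will pass to the reduced ring $\ol{R}=R/\fN(R)$ and mimic the proof of (3) of Lemma \ref{MainLemma}. Since $sa$ is almost complemented, choose $b\in R$ with $sab\in\fN(R)$ and $sa+b\in\areg(R)$. I claim that $b$ itself is an almost-complement of $a$.

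First, $s(ab)\in\fN(R)$ and $s\in\areg(R)$. By the internal description of aregular elements, this gives $ab\in\fN(R)$.

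Second, I must show $a+b\in\areg(R)$. Suppose $y(a+b)\in\fN(R)$; I need $y\in\fN(R)$. Working in $\ol{R}$, we have $\ol{y}\,\ol{a}=-\ol{y}\,\ol{b}$. Multiplying by $\ol{a}$ and by $\ol{b}$ respectively, and using $\ol{a}\,\ol{b}=0$, gives $\ol{y}\,\ol{a}^2=0=\ol{y}\,\ol{b}^2$. Now expand
$$\ol{y}(\ol{s}\,\ol{a}+\ol{b})^2=\ol{y}\,\ol{s}^2\ol{a}^2+2\ol{y}\,\ol{s}\,\ol{a}\,\ol{b}+\ol{y}\,\ol{b}^2=0.$$
Since $\areg(R)$ is multiplicative, $(sa+b)^2\in\areg(R)$, so $\ol{(sa+b)^2}$ is regular in $\ol{R}$. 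Hence $\ol{y}=0$, that is, $y\in\fN(R)$. Therefore $a+b\in\areg(R)$ and $a$ is almost complemented. This is a routine transcription of the earlier argument into $\ol{R}$.

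For part 2), the forward direction is immediate. If $a\in\reg(R)$, then $\ol{a}\in\reg(\ol{R})$ by Remark \ref{remark-reg}, so $a\in\areg(R)$. Also $a$ is complemented with complement $0$, since $a\cdot 0=0$ and $a+0\in\reg(R)$.

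For the converse, suppose $a\in\areg(R)$ and $b$ is a complement of $a$, so $ab=0$ and $a+b\in\reg(R)$.
\begin{enumerate}
\item From $ab=0\in\fN(R)$ and $a$ aregular, we get $b\in\fN(R)$. Choose $j$ with $b^j=0$.
\item Since $ab=0$, the Binomial Theorem gives $(a+b)^j=a^j+b^j=a^j$.
\item Because $(a+b)^j$ is regular, $a^j$ is regular, and therefore $a$ is regular, since a factor of a regular element is regular.
\end{enumerate}
Alternatively, one can note that $a=(a+b)-b$ is a regular element minus a nilpotent, and apply (2) of Lemma \ref{MainLemma} directly.

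The only step needing any care is the observation in part 1) that the squared element $(sa+b)^2$ is again aregular, which follows because $\areg(R)$ is multiplicatively closed. I do not expect any real obstacle.
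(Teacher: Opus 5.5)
Your proof is correct and follows the paper's route. In part 1) the paper applies (3) of Lemma \ref{MainLemma} in $\ol{R}$ using $\ol{s}\in\reg(\ol{R})$, which is exactly the argument you write out. In part 2) the paper uses $b\in\fN(R)$ and $a=(a+b)-b$ with (2) of Lemma \ref{MainLemma}, which is your stated alternative; your binomial computation $(a+b)^j=a^j$ is an equally valid direct version of that step. You also supply the easy forward direction, which the paper leaves implicit.
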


\begin{proof}
1) Since $sa$ is almost complemented it follows that $\overline{sa}$ is a complemented element of $\overline{R}$. Therefore, using that $\overline{s}\in\reg(\overline{R})$, and applying \ref{MainLemma}, $\overline{a}$ is a complemented element of $\overline{R}$. It follows that $a$ is an almost complemented element of $R$.
\vspace{.1in}

2) If $a$ is a complemented aregular element, then for some $b\in R$, $a+b\in\reg(R)$ and $ab=0$. The first tells us that $b\in\fN(R)$ and then we get (Lemma \ref{MainLemma}) that the difference of a regular and a nilpotent is regular, i.e. $a= (a+b) -b \in\reg(R)$.
\end{proof}

\begin{proposition}
The ring $R$ is almost complemented if and only if $q(R)$ is almost complemented.
\end{proposition}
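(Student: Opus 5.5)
The plan is to mimic the proof of Proposition \ref{q(R)}, working with the element-wise reformulation of almost complementedness from Lemma \ref{lem-a.c.}: $R$ is almost complemented iff every $x\in R\ssm\fN(R)$ admits $y$ with $xy\in\fN(R)$ and $x+y\in\areg(R)$. First I will record some facts about $Q=q(R)$. We have $\frac{a}{s}\in\fN(Q)$ iff $a\in\fN(R)$, so $\fN(Q)=\{\frac{n}{s}:n\in\fN(R)\}$. In addition, I claim that $\frac{a}{s}\in\areg(Q)$ iff $a\in\areg(R)$. Indeed, $\frac{a}{s}\frac{c}{t}\in\fN(Q)$ iff $ac\in\fN(R)$, and $\frac{c}{t}\in\fN(Q)$ iff $c\in\fN(R)$. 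Since units of $Q$ are aregular and $\areg$ is multiplicatively closed and saturated, it suffices to check the case $s=1$.

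For the forward direction, suppose $R$ is almost complemented and let $x=\frac{a}{s}\in Q\ssm\fN(Q)$. Then $a\notin\fN(R)$, so there is $b$ with $ab\in\fN(R)$ and $a+b\in\areg(R)$. Put $y=\frac{b}{s}$. Then $xy=\frac{ab}{s^2}\in\fN(Q)$ and $x+y=\frac{a+b}{s}\in\areg(Q)$ by the facts above. Hence $Q$ is almost complemented.

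For the converse, suppose $Q$ is almost complemented and let $r\in R\ssm\fN(R)$. Then $\frac r1\notin\fN(Q)$, so there is $\frac{b}{s}$ with $\frac{rb}{s}\in\fN(Q)$ and $\frac{r}{1}+\frac bs=\frac{sr+b}{s}\in\areg(Q)$. Translating, $rb\in\fN(R)$ and $sr+b\in\areg(R)$. Since $s$ is regular, $srb\in\fN(R)$ as well, so $sr$ is an almost complemented element of $R$ with complement $b$. Now $s\in\reg(R)\subseteq\areg(R)$ by Remark \ref{remark-reg}, so part 1) of the preceding lemma shows that $r$ is almost complemented. Hence $R$ is almost complemented.

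The step I expect to be the main obstacle is the characterization of $\areg(Q)$, specifically the implication that $a\in\areg(R)$ forces $\frac a1\in\areg(Q)$. Suppose $\frac a1\cdot\frac ct=\frac{ac}{t}$ is nilpotent in $Q$. Then $(ac)^n/t^n=0$, so $(ac)^n=0$ in $R$, using that a fraction with regular denominator vanishes iff its numerator does. Thus $ac\in\fN(R)$, hence $c\in\fN(R)$ since $a$ is aregular, and therefore $\frac ct\in\fN(Q)$. The reverse implication is immediate, because $R\subseteq Q$ and an element of $R$ is nilpotent in $Q$ iff it is nilpotent in $R$. Once this is in place, both directions are routine.
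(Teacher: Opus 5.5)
Your proof is correct and follows the paper's argument essentially step for step. In the forward direction you divide the almost complement by $s$, and in the converse you show $sr$ is almost complemented in $R$ and then cancel the regular element $s$ via part 1) of the preceding lemma; your explicit check that $\frac{a}{s}\in\areg(q(R))$ if and only if $a\in\areg(R)$ just spells out the step the paper carries out inline.
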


\begin{proof}
Suppose $R$ is almost complemented and let $\frac{x}{s}\in q(R)$. Then there is some $y\in R$ such that $xy\in\fN(R)$ and $x+y\in\areg(R)$. Clearly, $\frac{x}{s}\cdot \frac{y}{s}\in\fN(q(R))$. We claim that $\frac{x+y}{s}\in\areg(q(R))$. To that end, suppose $\frac{c}{t}\cdot \frac{x+y}{s}\in \fN(q(R))$. Then $c(x+y)\in\fN(R)$ and so $c\in\fN(R)$. It follows that $\frac{c}{t}\in\fN(q(R))$. Thus, $q(R)$ is almost complemented.
\vspace{.1in}

Conversely, suppose that $q(R)$ is almost complemented and let $x\in R\ssm\fN(R)$. Then there is some $\frac{y}{s}$ such that $\frac{x}{1}\cdot\frac{y}{s}\in\fN(q(R))$ and $\frac{sx+y}{s}\in\areg(q(R))$. So, $(sx)y\in\fN(R)$ and $sx+y\in\areg(R)$. This shows that $sx$ is almost complemented element of $R$. Since $s$ is regular, the previous lemma ensures us that $x$ is an almost complemented element of $R$.
\end{proof}

\begin{remark}
In \ref{main-example}, we provide an example of an almost complemented ring for which $\reg(R)\subset \areg(R)$. In fact, any example of an almost complemented ring that is not $\pi$-complemented will work.
\end{remark}

\begin{definition}
It is obvious that if $R\ssm \fN(R)=U(R)$, then $R$ is zero-dimensional. These are the precisely the local rings with nil Jacobson radical. Generalizing this slightly, we see that if $R\ssm\fN(R)=\reg(R)$, then $R$ is semi-complemented. These two classes of rings were first mentioned and studied in \cite{snapper}. There, the author called rings for which $R\ssm\fN(R)=\reg(R)$ {\bf primary} rings and rings that satisfy $R\ssm \fN(R)=U(R)$ {\bf completely primary}. These rings, especially the finite ones, have played an important role in the classification of those abelian groups which are the group of units of a ring.

The above terminology has been used even recently, but we should point out that other authors have used these names to mean different things. For example, Lam \cite{lam}, defines a completely primary ring as a local ring whose Jacobson radical is nilpotent. This is stronger than the above definition. We shall avoid the use of these terms altogether, and so for the lack of a better term, we shall say that a ring $R$ satisfies {\bf Property $D$} if $R\ssm \fN(R)=\reg(R)$. And if $R\ssm \fN(R)=\areg(R)$, then we shall say that $R$ satisfies {\bf Property $D^\flat$}. Clearly, $R$ satisfies Property $D^\flat$ if and only if $R/\fN(R)$ is an integral domain. We shall say that $R$ has a unique prime ideal when $R=U(R)\cup \fN(R)$.

It should be clear that if $R$ satisfies Property $D$, then $R$ is semi-complemented. And if $R$ satisfies Property $D^\flat$, then $R$ is almost complemented.

Finally, notice that the ring $R$ satisfies Property $D$ if and only if $q(R)$ satisfies Property $D$ if and only if $q(R)$ has a unique prime ideal.

\end{definition}

\begin{example}
Let $R=F[x_1,x_2,\ldots]/(x_1^2, x_2^3, \ldots, x_n^{n+1},\ldots)$. Then $R$ has a unique prime ideal but its Jacobson radical is not nilpotent.
\end{example}

\begin{remark}
In \cite[Proposition 4.4]{bmz2}, the authors show (in the case of commutative rings and using our notation) that $R\ssm\reg(R)=\fN(R)$ if and only if $R[t, t^{-1}] \subseteq  \fJ(R[t, t^{-1}])$.
\end{remark}

\begin{example}
We consider Example 7 of \cite{km}. Let $K$ be a field and set
$$R = K[x, y]_{(x,y)}/(xy, y^2).$$
We observe that $R$ does not satisfy Property $D$ since $x+(xy,y^2)$ is a zero-divisor that is not nilpotent. It does satisfy Property $D^\flat$ since the nilradical of $R$ is $\fN(R) = (y+ (xy, y^2))$ and $R/\fN(R)$ is isomorphic to $K[x]_{(x)}$, which is a domain.
\end{example}

Here is our main theorem of this section; we characterize the semi-complemented rings. We provide a different proof later on, but mention that the following one lends itself more to a possible generalization to non-commutative rings.

\begin{theorem}\label{main}
$R$ is semi-complemented if and only if $R$ satisfies Property $D$ or $R$ is complemented.
\end{theorem}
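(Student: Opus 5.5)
The direction ``$\Leftarrow$'' has already been observed. If $R$ is complemented, then every element is complemented, in particular every non-nilpotent one. If $R$ satisfies Property $D$, every non-nilpotent $x$ is regular, and $y=0$ is a complement. So the work is in ``$\Rightarrow$''. My plan is to assume $R$ is semi-complemented but not complemented, and deduce Property $D$. Since a reduced semi-complemented ring is complemented (noted after the definition), this assumption means $R$ is not reduced. So I fix a nilpotent $n\neq 0$ for the rest of the argument.

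Let $x\in R\ssm\fN(R)$ and choose a complement $y$, so $xy=0$ and $x+y\in\reg(R)$. The key claim is that $y\in\fN(R)$. Granting this, Lemma \ref{MainLemma}(2) gives $x=(x+y)-y\in\reg(R)$, since it is the difference of a regular and a nilpotent element. This shows $R\ssm\fN(R)\subseteq\reg(R)$. The reverse inclusion is automatic, since a regular element of a ring with $1\neq 0$ cannot be nilpotent. Hence $R$ has Property $D$.

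To prove the claim I will use Lemma \ref{MainLemma}(6) with $a=x$, $b=y$ and $b'=n$. The element $yn$ is nilpotent. The element $x+yn$ is not nilpotent, because otherwise $x=(x+yn)-yn$ would be nilpotent. So $x+yn$ is complemented by semi-complementedness, and Lemma \ref{MainLemma}(6) forces $yn=0$.

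Now suppose, for contradiction, that $y\notin\fN(R)$. Complementarity is symmetric: $yx=0$ and $y+x\in\reg(R)$, so $x$ is a complement of $y$. The same argument with the roles swapped shows $y+xn$ is non-nilpotent, hence complemented, and Lemma \ref{MainLemma}(6) gives $xn=0$. Then $(x+y)n=0$, and regularity of $x+y$ yields $n=0$, contradicting the choice of $n$. Therefore $y\in\fN(R)$, which proves the claim and finishes the proof.

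The only delicate step is spotting the trick of perturbing $x$ by the nilpotent $yn$ so that Lemma \ref{MainLemma}(6) applies. The other check needed is that this perturbation stays outside $\fN(R)$, which is what lets the semi-complemented hypothesis be invoked. Both points are routine once the perturbation is chosen.
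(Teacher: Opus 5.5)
Your proof is correct and is essentially the paper's argument: you use the same perturbation $x+yn$, apply Lemma~\ref{MainLemma}(6) once as is and once with the roles of $x$ and $y$ swapped, and then use regularity of $x+y$ to force $n=0$. The only difference is the packaging. The paper fixes a non-nilpotent zero-divisor $x$ (from the failure of Property $D$) and shows every nilpotent vanishes, whereas you fix a nonzero nilpotent and show that every non-nilpotent element has a nilpotent complement and is therefore regular.
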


\begin{proof}
The sufficiency is clear.
\vspace{.1in}

Let $R$ be a semi-complemented ring. We assume that $R$ does not satisfy Property $D$ and show that $R$ is reduced.
Let $n\in \fN(R)$.

Since $R$  does not satisfy Property $D$, $R$ contains a zero-divisor that is not a nilpotent, say $x$. Let $0\neq y$ be a complement of $x$: $xy=0$ and $x+y\in\reg(R)$. Observe that by Lemma \ref{MainLemma}, $y\notin \fN(R)$. Since $yn\in \fN(R)$ it follows that $x+yn\notin \fN(R)$. So, by hypothesis, $x+yn$ is complemented. Applying (6) of Lemma \ref{MainLemma}, we conclude that $yn=0$. But applying this argument again we conclude that $xn=0$ and so by regularity of $x+y$, $n=0$.
\end{proof}

\vspace{.2in}

Next, we share an illustration of the classes defined above.
\[
\begin{array}
[c]{ccccccc}
     \text{vNR} & \Rightarrow & \text{Compl.}  \\
     \Downarrow && \Downarrow \\
     \text{ZD} & & \text{Semi-Compl.} & \Rightarrow & \text{$\pi$-Compl.} & \Rightarrow & \text{Almost-Compl.} \\
    \Uparrow & & \Uparrow & & & &\Uparrow\\
 \text{Completely Primary}    & \Rightarrow & \text{Property $D$}  && \Rightarrow && \text{Property $D^\flat$}
\end{array}
\]

\vspace{.3in}

\section{Rings of Quotients and Products}

We briefly turn to rings of quotients. Since $\areg(R)$ is a multiplicative system we can form its ring of quotients $\areg(R)^{-1}R$. For the sake of simplicity, we shall denote it by $q_a(R)$; the subscript reminding us that we are taking denominators belonging to the set of aregular elements. We let $\varphi:R\ra q_a(R)$ be the ring homomorphism defined by $\varphi(x)=\frac{x}{1}$.

\begin{lemma}
The ring homomorphism $\varphi:R\ra q_a(R)$ has as its kernel
$$\ker \varphi = \bigcup_{s\in \areg(R)} \Ann(s).$$
Therefore, $\varphi$ is injective if and only if $\reg(R)=\areg(R)$.

Moreover, $\areg(R/\ker\varphi)=\reg(R/\ker\varphi)$, and furthermore, $q(R/\ker\varphi)=q_a(R)$.

\end{lemma}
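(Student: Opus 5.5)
The plan is to verify the four assertions one after another, using only the definition of localization and the fact that $\areg(R)$ is a saturated multiplicative set.

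\textbf{The kernel.} By the definition of localization, $\frac{x}{1}=0$ in $q_a(R)$ exactly when $sx=0$ for some $s\in\areg(R)$. This is the same as saying $x\in\bigcup_{s\in\areg(R)}\Ann(s)$. Call this union $K=\ker\varphi$.

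\textbf{Injectivity.} First, $\reg(R)\subseteq\areg(R)$ always holds. Indeed, if $r$ is regular and $ra\in\fN(R)$, then $(ra)^n=r^na^n=0$ for some $n$. Regularity of $r^n$ gives $a^n=0$, so $a\in\fN(R)$. It follows that $K=0$ if and only if every aregular element has zero annihilator, that is, if and only if $\areg(R)\subseteq\reg(R)$.

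\textbf{Aregular equals regular in $R/K$.} Write $S=R/K$ and use a tilde for cosets. Since $K\subseteq\fN(R)$ (if $sx=0$ with $s$ aregular then $x\in\fN(R)$), we have $\fN(S)=\fN(R)/K$. Hence $S/\fN(S)\cong R/\fN(R)$, and so $\tilde x\in\areg(S)$ if and only if $x\in\areg(R)$.

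\begin{itemize}
\item Let $x\in\areg(R)$ and suppose $\tilde x\tilde a=0$, i.e.\ $xa\in K$. Then $sxa=0$ for some $s\in\areg(R)$. Since $sx\in\areg(R)$, we get $a\in K$. So $\tilde x$ is regular, which gives $\areg(S)\subseteq\reg(S)$.
\item The reverse inclusion $\reg(S)\subseteq\areg(S)$ holds in any ring, as shown above.
\end{itemize}

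\textbf{The rings of quotients.} We show $q(S)=q_a(R)$, i.e.\ $\reg(S)^{-1}S\cong\areg(R)^{-1}R$. By the previous step, $\reg(S)$ is exactly the image of $\areg(R)$ in $S$. The standard fact that localization commutes with passing to the quotient by $K$ gives $\areg(R)^{-1}R/\areg(R)^{-1}K\cong \overline{\areg(R)}^{-1}(R/K)$. Moreover $\areg(R)^{-1}K=0$, since every element of $K$ is killed by some element of $\areg(R)$. Alternatively, one can check directly that $\frac{x}{s}\mapsto\frac{\tilde x}{\tilde s}$ is well defined, bijective and a ring map.

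The only delicate point is $K\subseteq\fN(R)$, and the fact that aregularity is therefore unchanged in $R/K$. Everything else is routine localization bookkeeping.
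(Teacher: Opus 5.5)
Your proof is correct and follows essentially the same route as the paper. The kernel is read off from the definition of localization, injectivity follows from it, $\areg(S)\subseteq\reg(S)$ is proved by pulling back to $\areg(R)$ and using that $\areg(R)$ is multiplicative, and the last claim is the standard fact that localizing commutes with factoring out the kernel. The only minor difference is in the step $\tilde x\in\areg(S)\Rightarrow x\in\areg(R)$: you get it structurally from $K\subseteq\fN(R)$ (so $S/\fN(S)\cong R/\fN(R)$), whereas the paper verifies it by a direct elementwise nilpotence computation.
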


\begin{proof}
Note that $\frac{x}{1}=\frac{0}{1}$ if and only if there is some $s\in \areg(R)$ such that $xs=0$. Thus, the characterization of the kernel is complete. The second statement follows from the first.
\vspace{.1in}

Let $\eta(R)=\ker\varphi$ and $S=R/\eta(R)$. Let $x\in R$ satisfy $x+\eta(R)\in\areg(S)$. We aim to show that $x\in \areg(R)$. To that end, let $y\in R$ such that $xy\in\fN(R)$. Choose $n\in \N$ such that $x^ny^n=0$. It follows that $xy+\eta(R)\in \fN(S)$ and so, by choice of $x$, we know that $y+\eta(R)\in \fN(S)$. This implies that $(y+\eta(R))^t=0$ for some natural $t$, i.e. $y^t\in \eta(R)$. In turn, this implies that there is some $z\in \areg(R)$ such that $zy^t=0$. It follows that $zy\in \fN(R)$ and since $z$ is aregular, $y\in \fN(R)$. Recapping, we showed that $xy\in \fN(R)$ implies $y\in \fN(R)$. Therefore, $x\in\areg(R)$.
\vspace{.1in}

Next, we show that if $x\in \areg(R)$, then $x+\eta(R)\in \reg(S)$. To that end, if $(x+\eta(R))(y+\eta(R))=0$, then $xy\in \eta(R)$ which implies that for some $s\in \areg(R)$, $(xs)y=(xy)s=0$. Since $xs\in\areg(R)$, $y\in \eta(R)$, whence $y+\eta(R)=0$. It follows that $x+\eta(R)\in\reg(R/\eta(R))$.
\vspace{.1in}

Putting these two pieces together yields that $\areg(R/\eta(R))\subseteq \reg(R/\eta(R))$. The reverse containment is always true, whence we have equality. The last statement follows from the well-known condition that the localization at a multiplicative set can also be obtained by first factoring out its kernel, and then localizing at the factor's multiplicative set of regular elements.
\end{proof}

\vspace{.3in}
We observe that due to the definition of aregular elements, an element annihilated by an aregular is nilpotent. Therefore, the ideal $\eta(R)$ is contained in the nilradical of $R$.

\begin{definition}
We say the ring $R$ is {\bf nearly reduced} if $\eta(R)=\{0\}$. Of course, this is equivalent to saying that $\reg(R)=\areg(R)$.
\end{definition}

Obviously, a reduced ring is nearly reduced. Any $\Z/p^k\Z$ is nearly reduced and not reduced (as long as $k\geq 2$). Furthermore, by Theorem \ref{Thm-pi.c.}, an almost complemented ring is $\pi$-complemented if and only if it is nearly reduced.

\vspace{.3in}

We next consider products. Recall that $R=\displaystyle{\Pi_{i\in I} R_i}$ is von Neumann regular (resp. complemented) if and only if each $R_i$ is von Neumann regular (resp. complemented). In the finite case, say $R=S\times T$, since $\fN(R)=\fN(S)\times \fN(T)$ and $R/\fN(R)=S/\fN(S)\times T/\fN(T)$, we also gather that $R$ is almost complemented if and only if both $S$ and $T$ are both almost complemented. Furthermore, $\reg(R)=\reg(S)\times \reg(T)$, and so it also follows that $\areg(R)=\areg(S)\times \areg(T)$. In the infinite case, once must take care to determine when a product is zero-dimensional. Recall that a collection of rings $\{R_i\}$ is said to be of bounded nilpotence if there is some $k\in\N$ such that for all $i\in I$ and all $x\in \fN(R_i)$, $x^k=0$. For example, the direct product of an infinite family of copies of $\Z/4\Z$ is of bounded nilpotence, whilst the product of the family $\{\Z/2^k\Z\}_{k\in\N}$ is not. The following is known; we include a proof for completeness sake. See the reference for more equivalent statements.

\begin{remark}
When studying products $\Pi_{i\in I} R_i$ we let $\chi_i$ denote the function that sends $i$ to 1 and every other index to 0. For an $a_i\in R_i$ we let $a_i\chi_i$ denote the function that sends $i$ to $a_i$ and every other index to 0.
\end{remark}

\begin{proposition}\cite[Theorem 3.4]{gh}
Let $J=\{R_i\}_{i\in I}$ be a family of rings and let $R=\Pi_{i\in I} R_i$. The following statements are equivalent.
\begin{enumerate}[label={\rm \arabic*.}, nolistsep]
\item $R$ is zero-dimensional.
\item Each $R_i$ is zero-dimensional and $\fN(R)=\Pi_{i\in I} \fN(R_i)$.
\item Each $R_i$ is zero-dimensional and $J$ is of bounded nilpotence.
\end{enumerate}
\end{proposition}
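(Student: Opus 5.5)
I will prove the cycle $1\Rightarrow 3\Rightarrow 2\Rightarrow 1$. Throughout I use the characterization of zero-dimensionality from \cite[Theorem 6]{arapovic}: a ring is zero-dimensional if and only if every element is $\pi$-regular, if and only if its reduction modulo the nilradical is von Neumann regular. I also use the coordinatewise behaviour of products.

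\emph{$1\Rightarrow 3$.} Each $R_i$ is a homomorphic image of $R$ via the projection, and zero-dimensionality passes to factor rings, so each $R_i$ is zero-dimensional. For bounded nilpotence, suppose it fails. Then for each $k\in\N$ there is an index $i_k$ and a nilpotent $x_k\in\fN(R_{i_k})$ with $x_k^k\neq 0$. Since a nilpotent with $x^k\neq 0$ also has $x^j\neq 0$ for every $j\le k$, I may pass to infinitely many distinct indices, or reuse one index if that single index already works for all $k$. Set $x=\sum_k x_k\chi_{i_k}$; this is a function on $I$, equal to $0$ off the chosen indices. No power of $x$ is zero, so $x\notin\fN(R)$.

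I then use $\pi$-regularity of $x$: there are $n$ and $a$ with $x^{2n}a=x^n$. Reading this in coordinate $i_k$ with $k>n$ (fine, since one coordinate is nilpotent, hence zero after enough powers) gives $x_k^{2n}a_{i_k}=x_k^n$. Iterating, $x_k^n=x_k^{mn}a_{i_k}^{m-1}$ for every $m$, and taking $m$ large gives $x_k^n=0$. This contradicts $x_k^k\neq 0$ for $k>n$. I expect this to be the main obstacle: building a non-nilpotent element out of nilpotent coordinates with unbounded index, and checking that $\pi$-regularity forces the coordinates to vanish.

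\emph{$3\Rightarrow 2$.} Clearly $\fN(R)\subseteq\Pi\fN(R_i)$. Conversely, if every coordinate of $x$ is nilpotent, bounded nilpotence with bound $k$ gives $x^k=0$ coordinatewise. So the two sets are equal.

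\emph{$2\Rightarrow 1$.} Let $x=(x_i)\in R$. Then $R/\fN(R)=\Pi R_i/\Pi\fN(R_i)\cong\Pi(R_i/\fN(R_i))$, which is a product of von Neumann regular rings. This product is von Neumann regular, since for each coordinate we can choose $a_i$ with $\bar x_i^2\bar a_i=\bar x_i$. Hence $R/\fN(R)$ is von Neumann regular, and $R$ is zero-dimensional by the theorem cited above.
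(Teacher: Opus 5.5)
\paragraph{The gap: the single-index case of $1\Rightarrow 3$.} Your $3\Rightarrow 2$ and $2\Rightarrow 1$ are correct. In fact your $2\Rightarrow 1$ is cleaner than the paper's $3\Rightarrow 1$. The paper needs a uniform exponent from bounded nilpotence and Huckaba's argument, whereas you simply observe that $R/\fN(R)\cong\Pi(R_i/\fN(R_i))$ is von Neumann regular.

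The gap is in $1\Rightarrow 3$, at the clause ``or reuse one index if that single index already works for all $k$.'' Your element $x=\sum_k x_k\chi_{i_k}$ is defined only when the $i_k$ are distinct. You can arrange that exactly when $\{i_k\}$ is infinite. If the $i_k$ take only finitely many values, then a single $R_i$ contains nilpotents $x_k$ with $x_k^k\neq 0$ for infinitely many $k$. These would all have to occupy the same coordinate, so no such $x$ exists. Nothing about $R_i$ being zero-dimensional rules this situation out.

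\paragraph{The case cannot be repaired.} With the paper's definition of bounded nilpotence (a single $k$ working for every $i\in I$), the implication $1\Rightarrow 3$ is false. Let $R_1=F[x_1,x_2,\ldots]/(x_1^2,x_2^3,\ldots)$, the ring from the paper's own example. Its maximal ideal $(x_1,x_2,\ldots)$ is nil, so $R_1$ is zero-dimensional, yet $x_k^k\neq 0$ for every $k$. Take the family consisting of $R_1$ alone, or $R_1$ together with any number of copies of $F$. Its product is zero-dimensional and satisfies 2, but the family is not of bounded nilpotence. The paper's proof goes $1\Rightarrow 2$ and $3\Rightarrow 1$, and it hides the same error in the claim that 2 and 3 are ``clearly'' equivalent, since $2\Rightarrow 3$ fails for this ring.

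\paragraph{The corrected statement.} Condition 3 should read: each $R_i$ is zero-dimensional, and there is a $k$ such that $x^k=0$ for every $x\in\fN(R_i)$, for all but finitely many $i$. For that version your proof goes through:
\begin{itemize}
\item For $1\Rightarrow 3$, the negation gives, for each $k$, infinitely many indices carrying a nilpotent $y$ with $y^k\neq 0$. So you can choose the $i_k$ distinct, and your $\pi$-regularity argument applies verbatim.
\item For $3\Rightarrow 2$, you only need to take the maximum of the finitely many exceptional nilpotency indices.
\item Your $2\Rightarrow 1$ is unchanged.
\end{itemize}
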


\begin{proof}
Clearly, 2. and 3. are equivalent. Suppose that 1. is true. Since the homomorphic image of a zero-dimensional ring is zero-dimensional, if $R$ is zero-dimensional, then so is each $R_i$. Let $f=(f_i)\in \fN(R)$, i.e.  each $f_i\in \fN(R_i)$. Since $R$ is zero-dimensional some power of $f$ is von Neumann regular, say $f^n$. Then in $R_i$, $f_i^n$ is von Neumann regular and nilpotent, so $f_i^n=0$. Hence $f^n=0$.
Thus, $f\in \Pi_{i\in I} \fN(R_i)$ and so 2. holds.
\vspace{.1in}

Next, suppose that 3. is true. In order to show that $R$ is zero-dimensional we take $s=(r_i)\in R$ and show that some power of $s$ is von Neumann regular. In \cite[Theorem 3.1]{huckaba}, it is shown in the proof of (3) $\Rightarrow$ (2), that if $r,x\in R_i$ and $n\in \N$ satisfies
$$0=(r-r^2x)^n,$$
then there is some $y\in R_i$ such that $r^{n+1}y=r^n$. Next, in the proof of (2) $\Rightarrow$ (1) of \cite[Theorem 3.1]{huckaba}, it is shown that the previous equality implies that $r^n=r^{2n}y^n$.   By 3., there is an $n\in\N$ such that for all $i\in I$, $0=(r_i-r_i^2x_i)^n$. And so for each $i$ there is a $y_i\in R_i$ such that $r_i^n=r_i^{2n}y_i^n$. Let $x=(y_i)\in R$. Then $s^n=s^{2n}x$, whence $s$ is $\pi$-regular, and therefore $R$ is zero-dimensional.
\end{proof}

We turn to the $\pi$-complemented condition for arbitrary products. To our knowledge this result is new.

\begin{proposition}
Let $\{R_i\}_{i\in I}$ be a family of rings and let $R=\Pi_{i\in I} R_i$. The following statements are equivalent.
\begin{enumerate}[label={\rm \arabic*.}, nolistsep]
\item $R$ is $\pi$-complemented.
\item Each $R_i$ is $\pi$-complemented and $\fN(R)=\Pi_{i\in I} \fN(R_i)$.
\item Each $R_i$ is $\pi$-complemented and the family is of bounded nilpotence.
\end{enumerate}
\end{proposition}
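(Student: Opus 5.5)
We will follow the pattern of the preceding proposition on zero-dimensional products, using the element-wise characterization from the lemma after the definition of $\pi$-complemented: $a$ is $\pi$-complemented if and only if there is $b$ with $ab\in\fN(R)$ and $a+b\in\reg(R)$. We also use the fact that $\reg(\Pi R_i)=\Pi\reg(R_i)$, since an element annihilates a product element exactly when it does so coordinatewise. The equivalence of 2.\ and 3.\ is immediate, exactly as before: $\fN(R)=\Pi\fN(R_i)$ holds precisely when there is a uniform nilpotency bound. Otherwise, picking $x_k\in\fN(R_{i_k})$ with $x_k^k\neq 0$ produces an element of $\Pi\fN(R_i)$ that is not nilpotent.

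For 1.\ $\Rightarrow$ 2., let $r_j\in R_j$ and take $a=r_j\chi_j+\sum_{i\neq j}\chi_i$, that is, $1$ in the other coordinates. If $a^n$ is complemented with complement $b$, then projecting onto coordinate $j$ shows that $r_j^n$ is complemented in $R_j$. Indeed $a^nb=0$ gives $r_j^nb_j=0$, and $a^n+b$ regular gives $r_j^n+b_j$ regular. Hence each $R_j$ is $\pi$-complemented. For the nilradical, let $f=(f_i)\in\Pi\fN(R_i)$. By hypothesis some power $f^n$ is complemented. Each coordinate $f_i^n$ is then a complemented nilpotent of $R_i$, so by (1) of Lemma \ref{MainLemma} $f_i^n=0$ for all $i$. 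Thus $f^n=0$, and $f\in\fN(R)$.

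For 3.\ $\Rightarrow$ 1., we will use the characterization in its first form with $n$ made uniform. Fix $k$ such that $x^k=0$ for every nilpotent $x$ in every $R_i$. Given $s=(r_i)\in R$, the lemma gives for each $i$ some $b_i$ with $r_ib_i\in\fN(R_i)$ and $r_i+b_i\in\reg(R_i)$. Then $(r_ib_i)^k=0$ in every coordinate, so with $b=(b_i)$ we get $(sb)^k=0$. Hence $sb\in\fN(R)$, while $s+b\in\Pi\reg(R_i)=\reg(R)$. The converse direction of the lemma then shows that $s$ is $\pi$-complemented.

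The main obstacle is this uniformity step. The converse direction of the lemma requires $sb$ to be genuinely nilpotent in $R$, not merely coordinatewise nilpotent, and this is exactly where bounded nilpotence enters. Alternatively, we will verify directly that $s^k$ is complemented with complement $b^k$. We have $s^kb^k=0$, and $(s+b)^k=s^k+b^k+x$ where each coordinate of $x$ is a sum of terms divisible by $r_ib_i$. So $x$ lies in $\Pi\fN(R_i)=\fN(R)$, and (2) of Lemma \ref{MainLemma} gives $s^k+b^k\in\reg(R)$.
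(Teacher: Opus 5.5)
Your $1\Rightarrow 2$ matches the paper's argument; padding $r_j$ with $1$'s instead of using $r_j\chi_j$ makes no difference. Your $3\Rightarrow 1$ construction is the paper's $2\Rightarrow 1$ argument. The genuine gap is the step $2\Rightarrow 3$, and your whole cycle depends on it.

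Your contrapositive chooses $x_k\in\fN(R_{i_k})$ with $x_k^k\neq 0$ and assembles them into a single element of $\Pi\fN(R_i)$. This works only if the indices $i_k$ can be taken pairwise distinct. With bounded nilpotence as the paper defines it (one $k$ for all $i$ and all $x\in\fN(R_i)$), they cannot be in general. Concretely, for a field $F$ let $R_0=F[x_1,x_2,\ldots]/(x_1^2,x_2^3,\ldots)$, which is the paper's own example.
\begin{itemize}
\item $R_0=U(R_0)\cup\fN(R_0)$, so every element is a unit or nilpotent. Hence $R_0$ is $\pi$-complemented.
\item $x_n^n\neq 0$ for every $n$, so $R_0$ has no uniform nilpotency bound.
\end{itemize}
For the one-member family $\{R_0\}$, or the infinite family $R_0,F,F,\ldots$, conditions 1 and 2 hold but 3 fails. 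So $2\Rightarrow 3$, and hence $1\Rightarrow 3$, is false as literally stated. The paper's remark that $2$ and $3$ are ``clearly'' equivalent hides the same defect.

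The difference lies in how the two arguments depend on this step. The paper proves $2\Rightarrow 1$ directly, so it validly establishes $1\Leftrightarrow 2$ and $3\Rightarrow 2$ regardless of the problem with 3. In your cycle $1\Rightarrow 2\Rightarrow 3\Rightarrow 1$, you reach $2\Rightarrow 1$ only through the false step, so your proof does not establish even this true implication.

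The repair is easy, and it removes what you call the main obstacle. Run your $3\Rightarrow 1$ argument under hypothesis 2 instead: there $sb\in\Pi\fN(R_i)=\fN(R)$ is given outright, and no uniform exponent is needed.

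To keep condition 3 in the equivalence, weaken bounded nilpotence to ``some $k$ works for all but finitely many $i$.'' Then both directions go through.
\begin{itemize}
\item For $2\Rightarrow 3$: if no such $k$ exists, then for each $k$ infinitely many indices violate the bound. So your $i_k$ can be chosen distinct, and your contrapositive is valid.
\item For $3\Rightarrow 2$: given $f\in\Pi\fN(R_i)$, take the maximum of $k$ and the nilpotency exponents of the finitely many exceptional coordinates of $f$.
\end{itemize}
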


\begin{proof}
That 2. and 3. are equivalent is clear.
\vspace{.1in}

1. $\Rightarrow$ 2. Suppose that $R$ is $\pi$-complemented and let $i\in I$ and $a_i\in R_i$. Let $f=a_i\chi_i$. By hypothesis, there is some $n\in \N$ such that $f^n$ is complemented. Let $g=(g_i)\in R$ be a complement of $f^n$. Then $a_i^ng_i=0$ and $a_i^n+g_i\in\reg(R_i)$. It follows that $a_i$ is a $\pi$-complemented element of $R_i$. Consequently, each $R_i$ is $\pi$-complemented.

Next, let $f=(f_i)\in \Pi_{i\in I} \fN(R_i)$ and choose $n\in \N$ such that $f^n$ is complemented. Let $g=(g_i)\in R$ be a complement of $f^n$. Since, for each $i\in I$, $f_i^n$ is complemented and nilpotent it follows by Lemma \ref{MainLemma} that $f_i^n=0$ for each $i$; whence $f^n=0$. Therefore, $\fN(R)=\Pi_{i\in I} \fN(R_i)$.
\vspace{.1in}

2. $\Rightarrow$ 1. Let $f=(f_i)\in R$. We use condition 2. of Theorem \ref{Thm-pi.c.}. For each $i\in I$ there is a $y_i\in R_i$ such that $f_iy_i\in\fN(R)$ and $f_i+y_i\in\reg(R_i)$. Let $g=(y_i)\in R$. Then $fg\in \Pi_{i\in I} \fN(R_i)=\fN(R)$ and $f+g\in\reg(R)$. It follows that $R$ is $\pi$-complemented.
\end{proof}

Next, we attempt to address when the product of a family of rings is almost complemented; unfortunately we do not have a complete answer. Recall that an element is a regular element of the product if and only if each coordinate is a regular element of its factor. Interestingly, the same cannot be said for aregular elements. However, we do get one direction.

\begin{lemma}\label{areg}
Let $\{R_i\}_{i\in I}$ be a family of rings and let $R=\Pi_{i\in I} R_i$. If $r=(r_i)\in \areg(R)$, then $r_i\in \areg(R_i)$ for each $i\in I$.

\end{lemma}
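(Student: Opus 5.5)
We use the internal description of aregular elements: $x\in\areg(R)$ precisely when $xa\in\fN(R)$ implies $a\in\fN(R)$. Fix $r=(r_i)\in\areg(R)$ and an index $i\in I$. Suppose $a_i\in R_i$ satisfies $r_ia_i\in\fN(R_i)$. We must show that $a_i\in\fN(R_i)$.

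To do this, lift $a_i$ to the product using the notation of the preceding remark, setting $f=a_i\chi_i\in R$. Then $rf=(r_ia_i)\chi_i$. Choose $n$ with $(r_ia_i)^n=0$. Then $(rf)^n=(r_ia_i)^n\chi_i=0$, so $rf\in\fN(R)$.

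The only care needed is that a coordinatewise nilpotent element of an infinite product need not be nilpotent. Here, however, $rf$ is supported on the single coordinate $i$, so a single exponent $n$ works and $rf$ is genuinely nilpotent in $R$.

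Since $r\in\areg(R)$, it follows that $f\in\fN(R)$. Say $f^m=0$. Projecting to coordinate $i$ gives $a_i^m=0$, so $a_i\in\fN(R_i)$. Therefore $r_i\in\areg(R_i)$.

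The argument has no real obstacle; the one point to note is the single-coordinate support that ensures nilpotence in the product. The converse fails precisely because coordinatewise nilpotence does not imply nilpotence when the nilpotency indices are unbounded.
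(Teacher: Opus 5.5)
Your proof is correct and is essentially the paper's own argument: the paper likewise sets $f=x_i\chi_i$, observes $rf\in\fN(R)$, and concludes $f\in\fN(R)$, hence $x_i\in\fN(R_i)$. The paper also checks separately, using $\chi_i$, that $r_i\notin\fN(R_i)$. That step is redundant once you use the internal criterion as an equivalence, since taking $a_i=1$ in your argument already excludes nilpotence of $r_i$.
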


\begin{proof}
First suppose that $r_i\in\fN(R_i)$. Observe that $\chi_i\notin\fN(R)$ but $r_i\chi_i\in \fN(R)$, contradicting our choice of $r\in\areg(R)$. Therefore, $r_i\notin\fN(R)$.

Next, suppose that $x_i\in R_i$ and $r_ix_i\in \fN(R_i)$. Let $f=x_i\chi_i$. Then $rf\in \fN(R)$, whence $f\in \fN(R)$ from which we gather that $x_i\in\fN(R_i)$.
\end{proof}

Here are a couple of consequences of this result.

\begin{proposition}
Suppose $R=\Pi_{i\in I} R_i$. Then $R$ is nearly reduced if and only if each $R_i$ is nearly reduced.
\end{proposition}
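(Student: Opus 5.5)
Recall that $R$ is nearly reduced exactly when $\reg(R)=\areg(R)$; since $\reg(R)\subseteq\areg(R)$ always holds, the content is the reverse inclusion. For the product $R=\Pi_{i\in I}R_i$ we know $\reg(R)=\Pi_{i\in I}\reg(R_i)$, and Lemma \ref{areg} gives $\areg(R)\subseteq \Pi_{i\in I}\areg(R_i)$. So I will compare the two sets coordinatewise.

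For sufficiency, assume each $R_i$ is nearly reduced and let $r=(r_i)\in\areg(R)$. By Lemma \ref{areg}, each $r_i\in\areg(R_i)$, and since $R_i$ is nearly reduced, $r_i\in\reg(R_i)$. Hence $r\in\Pi_{i\in I}\reg(R_i)=\reg(R)$, so $\areg(R)\subseteq\reg(R)$ and $R$ is nearly reduced.

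For necessity, assume $R$ is nearly reduced, fix $i$, and let $a\in\areg(R_i)$. I would embed $a$ into $R$ as $f=a\chi_i+(1-\chi_i)$, that is, $a$ in coordinate $i$ and $1$ elsewhere. The claim is that $f\in\areg(R)$. Suppose $fg\in\fN(R)$ for some $g=(g_j)\in R$, with $(fg)^n=0$. Then $f^ng^n=0$ holds coordinatewise. For $j\neq i$ this gives $g_j^n=0$, and in coordinate $i$ it gives $(ag_i)^n=0$, so $g_i^m=0$ for some $m$ because $a$ is aregular in $R_i$. Taking $N=\max(n,m)$ gives $g^N=0$, so $g\in\fN(R)$ and $f\in\areg(R)$. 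Since $R$ is nearly reduced, $f\in\reg(R)$, so each coordinate is regular and in particular $a\in\reg(R_i)$.

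The only step needing care is the uniform nilpotence bound in the necessity direction. It works here because all coordinates other than $i$ are controlled by the single exponent $n$ coming from $(fg)^n=0$, so no bounded-nilpotence hypothesis on the family is needed. The same observation shows why the converse of Lemma \ref{areg} can fail in general but causes no trouble in this argument: we only use the inclusion $\areg(R)\subseteq\Pi_{i\in I}\areg(R_i)$ together with the special embedded element $f$.
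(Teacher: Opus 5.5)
Your proposal is correct and follows the paper's proof essentially verbatim. Both arguments embed $a$ as the element with $a$ in coordinate $i$ and $1$ elsewhere, and use the single exponent from $(fg)^n=0$ to bound nilpotence in the other coordinates. Both also get the converse from Lemma \ref{areg} together with $\reg(R)=\Pi_{i\in I}\reg(R_i)$. Your omission of the check that $f\notin\fN(R)$ is harmless, since the internal characterization of aregularity already forces it.
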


\begin{proof}
Suppose that $R$ is nearly reduced and let $j\in I$ and $a_j\in \areg(R_j)$. Let $f\in R$ be defined by
$$f(i)=\begin{cases}
       a_j , & \mbox{if } i=j \\
       1 , & \mbox{otherwise}.
      \end{cases}$$

We claim that $f\in \areg(R)$. Clearly, $f\notin \fN(R)$. So suppose $g=(g_i)\in R$ and that $fg\in \fN(R)$. It follows that for each $i\neq j$, $g_i\in \fN(R_i)$ and $a_jg_j\in \fN(R_j)$. Since $a_j\in\areg(R_j)$, then $g_j\in \fN(R_j)$. Choose $n\in \N$ such that both $(fg)^n=0$ and $g_j^n=0$. Then $g^n=0$. Thus, $f\in \areg(R)$. By hypothesis, $f\in\reg(R)$ and so $a_j\in \reg(R_j)$.
\vspace{.1in}

Next suppose that each $R_i$ is nearly reduced and let $f=(f_i)\in\areg(R)$. Then, by Lemma \ref{areg}, each $f_i\in\areg(R_i)=\reg(R_i)$. It follows that $f\in\reg(R)$.
\end{proof}

\begin{lemma}\label{nia}
Let $\{R_i\}_{i\in I}$ be a family of rings and let $R=\Pi_{i\in I} R_i$. If $\fN(R)=\Pi_{i\in I} \fN(R_i)$, then $\areg(R)=\Pi_{i\in I} \areg(R_i)$.
\end{lemma}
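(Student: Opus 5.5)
One inclusion is already Lemma \ref{areg}: if $r=(r_i)\in\areg(R)$, then $r_i\in\areg(R_i)$ for every $i$, so $\areg(R)\subseteq\Pi_{i\in I}\areg(R_i)$. This holds for any product and does not use the hypothesis. The work is therefore in the reverse inclusion, and that is where I will use $\fN(R)=\Pi_{i\in I}\fN(R_i)$.

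For the reverse inclusion, take $r=(r_i)$ with each $r_i\in\areg(R_i)$, and suppose $g=(g_i)\in R$ satisfies $rg\in\fN(R)$. I have to show $g\in\fN(R)$. The first step is coordinatewise: $rg\in\fN(R)$ gives some $n$ with $(rg)^n=0$, so $r_i^n g_i^n=0$ for every $i$. Hence $r_ig_i\in\fN(R_i)$, and since $r_i$ is aregular in $R_i$, we get $g_i\in\fN(R_i)$ for each $i$. This puts $g$ in $\Pi_{i\in I}\fN(R_i)$, and the hypothesis turns that into $g\in\fN(R)$.

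It remains to check that $r$ itself is not nilpotent. This is immediate: if $r$ were nilpotent, then $r_i$ would be nilpotent in $R_i$ for every $i$, contradicting $r_i\in\areg(R_i)$. (The implication above already covers this anyway, since with $g=1$ it would force $1\in\fN(R)$.) So $r\in\areg(R)$, and together with Lemma \ref{areg} this gives equality.

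The only real obstacle is passing from "each coordinate of $g$ is nilpotent" to "$g$ is nilpotent". In general this fails without bounded nilpotence, and this is exactly the point where the hypothesis $\fN(R)=\Pi_{i\in I}\fN(R_i)$ is used. Every other step is a coordinatewise verification.
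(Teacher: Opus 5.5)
Your proof is correct and is essentially the paper's argument. The paper likewise checks only $\Pi_{i\in I}\areg(R_i)\subseteq\areg(R)$ by the same coordinatewise reasoning (non-nilpotence from the coordinates, then $fg\in\fN(R)$ forces each $f_i\in\fN(R_i)$, and the hypothesis gives $f\in\fN(R)$), while leaving the other inclusion via Lemma~\ref{areg} implicit where you cite it explicitly.
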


\begin{proof}
Let $g=(g_i)\in \Pi_{i\in I} \areg(R_i)$. Then each $g_i\in\areg(R_i)$ and so $g_i\notin\fN(R_i)$ for each $i\in I$. It follows that $g\notin\fN(R)$. Next, suppose that $fg\in\fN(R)$ for $f=(f_i)\in R$. Since for each $i$, $g_if_i\in \fN(R_i)$ it follows that $f_i\in \fN(R_i)$ and so $f\in \Pi_{i\in I} \fN(R_i)$. Whence $f\in \fN(R)$.
\end{proof}

So far our next two results are the best we can say about when a product is almost complemented. We have been unable to show that a family of rings whose product is almost complemented must have bounded nilpotence.

\begin{proposition}
Let $\{R_i\}_{i\in I}$ be a family of rings and let $R=\Pi_{i\in I} R_i$.  If $R$ is almost complemented, then each $R_i$ is almost complemented. With the added assumption that the family has bounded nilpotence, the converse holds.
\end{proposition}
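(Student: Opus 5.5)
For the forward direction we pass to a single coordinate, mirroring the proof of 1.~$\Rightarrow$~2.\ in the $\pi$-complemented product proposition. Fix $j\in I$ and $a_j\in R_j$, and set $f=a_j\chi_j\in R$. Since $R$ is almost complemented, the reformulation after the definition of $\areg$ gives $g=(g_i)\in R$ with $fg\in\fN(R)$ and $f+g\in\areg(R)$.

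Taking the $j$-th coordinate, $a_jg_j$ is the $j$-th coordinate of a nilpotent element, so $a_jg_j\in\fN(R_j)$. The $j$-th coordinate of $f+g$ is $a_j+g_j$, and Lemma \ref{areg} gives $a_j+g_j\in\areg(R_j)$. Hence $a_j$ is an almost complemented element of $R_j$. Since $a_j$ was arbitrary, $R_j$ is almost complemented. This direction requires no further input.

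For the converse, assume each $R_i$ is almost complemented and the family has bounded nilpotence, say $x^k=0$ for every $i$ and every $x\in\fN(R_i)$. The first step is to show $\fN(R)=\Pi_{i\in I}\fN(R_i)$.
\begin{itemize}
\item The inclusion $\subseteq$ is automatic, since each coordinate of a nilpotent element of $R$ is nilpotent.
\item For $\supseteq$, if $h=(h_i)$ has every $h_i$ nilpotent, then $h_i^k=0$ for all $i$, so $h^k=0$.
\end{itemize}
With this equality, Lemma \ref{nia} yields $\areg(R)=\Pi_{i\in I}\areg(R_i)$.

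Now let $f=(f_i)\in R$. For each $i$, Lemma \ref{lem-a.c.} (in the form valid for all elements, including nilpotent ones) gives $y_i\in R_i$ with $f_iy_i\in\fN(R_i)$ and $f_i+y_i\in\areg(R_i)$. Put $g=(y_i)$; this choice uses the axiom of choice, as in the earlier product proofs. Then
\begin{itemize}
\item $fg\in\Pi_{i\in I}\fN(R_i)=\fN(R)$, and
\item $f+g\in\Pi_{i\in I}\areg(R_i)=\areg(R)$.
\end{itemize}
So every element of $R$ is almost complemented, and $R$ is almost complemented.

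The only place the argument needs care is the identification of the nilradical and of $\areg$ of the product. This is precisely where bounded nilpotence enters. Without it, $fg$ could fail to be nilpotent, and an element of $\Pi_{i\in I}\areg(R_i)$ need not be aregular in $R$. Once Lemma \ref{nia} is available, the remaining steps are coordinatewise bookkeeping.
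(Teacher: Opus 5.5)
Your proof is correct and follows the paper's argument essentially verbatim: the forward direction takes an almost complement of $a_j\chi_j$ and reads off the $j$-th coordinate via Lemma \ref{areg}, and the converse builds $g$ coordinatewise, using $\fN(R)=\Pi_{i\in I}\fN(R_i)$ together with Lemma \ref{nia}. Your explicit check that bounded nilpotence gives this equality of nilradicals fills in a step the paper leaves implicit.
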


\begin{proof}
Suppose that $R$ is almost complemented and let $a_i\in R_i$. Let $g=(g_i)$ be an almost complement for the element $a_i\chi_i$. Then notice that $a_ig_i\in\fN(R_i)$ and $a_i+g_i\in\areg(R_i)$. It follows that $R_i$ is almost complemented.

For the reverse direction, assume that $\fN(R)=\Pi_{i\in I} \fN(R_i)$ and let $f=(f_i)\in R$. For each $i\in I$, let $g_i\in R_i$ be an almost complement of $f_i$ and set $g=(g_i)$. Then since for each $i\in I$ $f_ig_i\in\fN(R_i)$, the hypothesis yields that $fg\in \fN(R)$. Similarly, by Lemma \ref{nia}, $f+g\in\areg(R)$.
\end{proof}

\begin{proposition}
Let $\{R_i\}_{i\in I}$ be a family of rings and let $R=\Pi_{i\in I} R_i$. Then the following statements are equivalent.
\begin{enumerate}[label={\rm \arabic*.}, nolistsep]
\item $R$ is almost complemented and $\fN(R)=\Pi_{i\in I} \fN(R_i)$.
\item $R$ is almost complemented and $\areg(R)=\Pi_{i\in I} \areg(R_i)$.
\end{enumerate}

\end{proposition}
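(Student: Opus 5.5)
The plan is to prove each implication directly, combining Lemma \ref{areg} and Lemma \ref{nia} with the definition of an almost complemented element. The almost complemented hypothesis is only needed for 2. $\Rightarrow$ 1.

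\emph{1. $\Rightarrow$ 2.} Assume $\fN(R)=\Pi_{i\in I}\fN(R_i)$. Lemma \ref{areg} gives $\areg(R)\subseteq \Pi_{i\in I}\areg(R_i)$ for any product. The proof of Lemma \ref{nia} shows that, under the hypothesis $\fN(R)=\Pi_{i\in I}\fN(R_i)$, every element of $\Pi_{i\in I}\areg(R_i)$ is aregular in $R$. Together these give $\areg(R)=\Pi_{i\in I}\areg(R_i)$, and $R$ is still almost complemented.

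\emph{2. $\Rightarrow$ 1.} The inclusion $\fN(R)\subseteq \Pi_{i\in I}\fN(R_i)$ always holds. So let $f=(f_i)$ with each $f_i\in\fN(R_i)$; the aim is $f\in\fN(R)$. Since $R$ is almost complemented, the reformulation following Lemma \ref{lem-a.c.} (valid for every element, nilpotent or not) gives $h=(h_i)\in R$ with $fh\in\fN(R)$ and $f+h\in\areg(R)$. By Lemma \ref{areg}, each $f_i+h_i\in\areg(R_i)$.

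Since $f_i$ is nilpotent, $\overline{h_i}=\overline{f_i+h_i}$ in $R_i/\fN(R_i)$, so $h_i\in\areg(R_i)$ for every $i$. Hence $h\in\Pi_{i\in I}\areg(R_i)=\areg(R)$ by hypothesis 2. Finally, $fh\in\fN(R)$ with $h$ aregular forces $f\in\fN(R)$, by the internal characterization of aregular elements. This proves $\Pi_{i\in I}\fN(R_i)\subseteq\fN(R)$.

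I expect no step to be hard. The one point needing care is noting that the almost complement exists even for the (coordinatewise) nilpotent element $f$. This is guaranteed by the ``moreover'' part of Lemma \ref{lem-a.c.}, or simply by the fact that nilpotent elements are almost complemented.
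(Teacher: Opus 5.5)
Your proof is correct and is essentially the paper's argument. For 1.~$\Rightarrow$~2.\ the paper cites Lemma~\ref{nia}, and you rightly pair it with Lemma~\ref{areg}, since the proof of Lemma~\ref{nia} only checks $\Pi_{i\in I}\areg(R_i)\subseteq\areg(R)$. For 2.~$\Rightarrow$~1.\ you do what the paper does: take an almost complement $h$ of $f$, get $h_i\in\areg(R_i)$ for all $i$, hence $h\in\areg(R)$, and conclude $f\in\fN(R)$ from $fh\in\fN(R)$.

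The only slip is your alternative justification ``nilpotent elements are almost complemented''. It does not apply directly, because $f$ is only coordinatewise nilpotent and is not yet known to lie in $\fN(R)$. The correct reason is the ``moreover'' part of Lemma~\ref{lem-a.c.}: every element of an almost complemented ring has an almost complement.
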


\begin{proof}
1. $\Rightarrow$ 2. Clear by Lemma \ref{nia}.
\vspace{.1in}

2. $\Rightarrow$ 1. Let $f=(f_i)\in \Pi_{i\in I} \fN(R_i)$. Let $g=(g_i)\in R$ satisfy $fg\in \fN(R)$ and $f+g\in \areg(R)$. Since each $f_i\in \fN(R_i)$ and $f_i+g_i\in\areg(R_i)$ it follows that each $g_i\in\areg(R_i)$. By hypothesis, $g\in \areg(R)$. Consequently, $f\in \fN(R)$.
\end{proof}

\begin{example}
Observe that if $\{R_i\}_{i\in I}$ is a collection of $\pi$-complemented rings, then since each $R_i$ is nearly reduced it follows that the product is nearly reduced, and therefore the product is almost complemented if and only if the product is $\pi$-complemented. This also shows that if each $R_i$ is almost complemented and $\areg(R)=\Pi_{i\in I} \areg(R)$, then it does not follow that the family has bounded nilpotence.
\end{example}
\vspace{.3in}

It should be apparent that if one of the factors of a product has non-zero nilpotent elements, then the product cannot be semi-complemented. In other words, if $R$ is semi-complemented and not reduced, then $R$ is indecomposable.
We can now supply a different proof of our main theorem from the last section: Theorem \ref{main}.
\begin{theorem}
The ring $R$ is semi-complemented if and only if $R$ satisfies Property $D$ or $R$ is complemented.
\end{theorem}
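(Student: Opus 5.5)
The plan is to route the argument through the ring of quotients and the remark just made that a non-reduced semi-complemented ring is indecomposable. Sufficiency is immediate, as before: Property $D$ means every non-nilpotent element is regular, hence complemented by $0$; and a complemented ring is trivially semi-complemented. For necessity, let $R$ be semi-complemented and not reduced. We must show $R$ has Property $D$.

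First pass to $q(R)$. By Proposition \ref{q(R)}, $q(R)$ is semi-complemented. It is also not reduced, since $\frac{a}{s}$ is nilpotent exactly when $a$ is. Moreover, $R$ has Property $D$ if and only if $q(R)$ does, and this holds if and only if $q(R)$ has a unique prime ideal, as observed after the definition of Property $D$. So we may assume $R=q(R)$, i.e.\ every regular element is a unit.

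Now take $x\in R\ssm\fN(R)$ with complement $y$. Then $x+y$ is a unit $u$. Put $e=xu^{-1}$ and $f=yu^{-1}$. Then $ef=0$ and $e+f=1$, so $e$ is an idempotent with $x=eu$. By the remark preceding the theorem, $R$ is indecomposable: if $e\neq 0,1$, then $R\cong eR\times (1-e)R$, and one factor carries a non-zero nilpotent. In that case the product cannot be semi-complemented, because a nilpotent $(n,0)$ with $n\neq0$ times $(1,0)$ gives the non-nilpotent element $(1,n)$ of the other decomposition. The cleaner route is to use the earlier remark directly. Hence $e\in\{0,1\}$. Since $x$ is not nilpotent, $x\neq0$, so $e=1$ and $x=u$ is a unit. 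Therefore $R\ssm\fN(R)=U(R)=\reg(R)$, which is Property $D$. Returning to the original ring via the equivalence above completes the proof.

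The main obstacle is justifying the indecomposability claim rigorously, namely that a product $S\times T$ with $\fN(S)\neq0$ is not semi-complemented. I would argue it as follows. Pick $0\neq n\in\fN(S)$ and consider $(n,1)$, which is not nilpotent. A complement $(a,b)$ must satisfy that $n$ is complemented in $S$ with complement $a$, contradicting (1) of Lemma \ref{MainLemma}, because regularity and annihilation are computed coordinatewise.
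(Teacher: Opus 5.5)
Your proof is correct, and its outer skeleton matches the paper's second proof. You pass to $q(R)$, which is still semi-complemented and non-reduced. You use that a non-reduced semi-complemented ring is indecomposable, show every element of $q(R)\ssm\fN(q(R))$ is a unit, and pull Property $D$ back to $R$.

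The step that produces the idempotent is different. The paper argues that semi-complemented rings are $\pi$-complemented, so $q(R)$ is zero-dimensional by Theorem \ref{Thm-pi.c.}. Hence some power $x^n$ is von Neumann regular and $x^n q(R)=e\,q(R)$ for an idempotent $e$, which indecomposability forces to be $1$. You instead use that regular elements of $q(R)$ are units. A complement $y$ of $x$ then gives the idempotent $e=x(x+y)^{-1}$ with $x=e(x+y)$ directly.

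Your version is more elementary and self-contained:
\begin{itemize}
\item it needs neither the zero-dimensionality of $q(R)$ nor any $\pi$-regularity, and the relevant implication of Theorem \ref{Thm-pi.c.} is one the paper quotes from the literature rather than proves;
\item it supplies a proof of the indecomposability claim, via $(n,1)$ and Lemma \ref{MainLemma}(1), which the paper only asserts as apparent.
\end{itemize}
The paper's route, in exchange, places the theorem inside the $\pi$-complemented/zero-dimensional framework developed in Section 2.

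One blemish to remove: the sentence claiming that $(n,0)$ times $(1,0)$ gives the non-nilpotent element $(1,n)$ is wrong, since that product is $(n,0)$. It contributes nothing. Your final paragraph already gives the correct argument.
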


\begin{proof}
Suppose $R$ is semi-complemented and further assume that $R$ is not reduced. Then $q(R)$ is a semi-complemented zero-dimensional ring that is not reduced; hence $q(R)$ is indecomposable. Take $x\in q(R)\ssm \fN(q(R))$. Then some power of $x$ is von Neumann regular, say $x^n$. This implies that $x^nR=eR$ for some idempotent $e\in R$. Since $q(R)$ cannot have any direct summands it follows that either $e=0$ or $e=1$, but it cannot be the former since $x$ is not nilpotent. It follows that $x^n$ and hence $x$ is a unit. Consequently, $q(R)\ssm\fN(q(R))=U(q(R))$. Consequently, $R$ satisfies Property $D$.
\end{proof}

\vspace{.2in}

\section{Constructing Semi-Complemented Rings}
We would like to share a construction that can be used to produce other semi-complemented rings. Throughout this section, $T$ denotes a ring with a unital subring $S$ and an ideal $I$ such that $T=S+I$ and $S\cap I=\{0\}$. The latter two conditions imply that every element of $T$ has a unique representation $t=s+i$ with $s\in S$ and $i\in I$. Many of the constructions of commutative rings can be viewed this way. Here are some examples.

\begin{example}
The quintessential example of the construction $T=S+I$ is the idealization of a ring $R$ over an $R$-algebra $V$: $T=I(R,V)$, $S=R$, and $I=V$. As a set $I(R,V)=R\times V$ with coordinate-wise addition and multiplication defined by $(r,v)(s,u)=(rs,ru+sv+uv)$. In fact, the idealization of a ring is the external way of creating the $T=S+I$ construction.

1) A special case of the idealization is the trivial extension where we have an $R$-module $M$ equipped with the trivial multiplication: for all $m,n\in M$, $mn=0$. This construction is denoted by $R\propto M$. (We point out that we assume the left and right $R$-module structure is the same: for all $r\in R$, $m\in M$, $rm=mr$.)
\vspace{.1in}

2) The $A+B$ construction: $T=A+B$, $S=A$, and $I=B$.
\vspace{.1in}

3) The polynomial ring: $T=R[\{x_i\}]$, $S=R$, and $I=\langle \{x_i\}\rangle$.
\vspace{.1in}

\end{example}

Notice that in $T=S+I$, since $I$ is an ideal it is an $S$-module. Recall that an $R$-module $M$ is said to be {\bf torsion-free} if for all $r\in \reg(R)$ and $m\in M$, $rm=0$ implies $m=0$.

The proof of the following should be evident.
\begin{lemma}\label{tf}
Let $T=S+I$. The ideal $I$ is a torsion-free $S$-module if and only if $\reg(T)\cap S=\reg(S)$.
\end{lemma}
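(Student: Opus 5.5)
We prove the two implications separately. Throughout, we use the unique decomposition of each $t\in T$ as $t=s+i$ with $s\in S$ and $i\in I$. We also use that $I$ is an ideal of $T$, so that $Si\subseteq I$ and $I\cdot I\subseteq I$.

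First suppose that $I$ is a torsion-free $S$-module, and let $s\in\reg(S)$. We must show that $s\in\reg(T)$. Take $t=s'+i\in T$ with $st=0$. Then $ss'+si=0$, where $ss'\in S$ and $si\in I$. Uniqueness of the representation of $0$ forces $ss'=0$ and $si=0$. Since $s\in\reg(S)$, the first equation gives $s'=0$. Since $I$ is torsion-free over $S$, the second gives $i=0$. Hence $t=0$, and $s\in\reg(T)$. This proves $\reg(S)\subseteq\reg(T)\cap S$.

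The reverse inclusion $\reg(T)\cap S\subseteq\reg(S)$ holds unconditionally. If $s\in S$ is regular in $T$ and $ss'=0$ with $s'\in S\subseteq T$, then $s'=0$. Thus the hypothesis yields $\reg(T)\cap S=\reg(S)$.

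Conversely, suppose that $\reg(T)\cap S=\reg(S)$. Let $s\in\reg(S)$ and $i\in I$ with $si=0$. By hypothesis $s$ is regular in $T$, and $i\in T$, so $i=0$. Thus $I$ is torsion-free as an $S$-module.

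No step is a genuine obstacle. The only point needing care is in the first implication: we separate the $S$- and $I$-components of $st=0$ using $S\cap I=\{0\}$, which is what lets the regularity of $s$ in $S$ and the torsion-freeness of $I$ be applied independently.
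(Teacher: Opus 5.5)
Your proof is correct and complete. The paper omits the argument, saying it ``should be evident.'' Your verification is exactly the routine check intended: you split $st=0$ into its $S$- and $I$-components using $S\cap I=\{0\}$, note that $\reg(T)\cap S\subseteq\reg(S)$ always holds, and read off torsion-freeness directly for the converse.
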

\vspace{.2in}

\subsection{Nil Ideals}

We begin by considering the case that $I$ is a nil ideal.
\begin{lemma}\label{aregnil}
Suppose $T=S+I$ and $I$ is a nil ideal. Then $\areg(T)\cap S=\areg(S)$.
\end{lemma}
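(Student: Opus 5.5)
The plan is to first pin down the nilradical of $T$ and then verify the two inclusions straight from the internal description of aregular elements: $x\in\areg(R)$ if and only if $xa\in\fN(R)$ forces $a\in\fN(R)$.

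The key preliminary step is the identity $\fN(T)=\fN(S)+I$, which I will establish first.
\begin{itemize}
\item For $\supseteq$: $I$ is nil, so $I\subseteq\fN(T)$, and $\fN(S)\subseteq\fN(T)$ trivially. Since $\fN(T)$ is an ideal, $\fN(S)+I\subseteq\fN(T)$.
\item For $\subseteq$: take $s+i\in\fN(T)$ with $s\in S$, $i\in I$. Then $s=(s+i)-i$ is a sum of two nilpotents, hence nilpotent in $T$, and so nilpotent in $S$.
\end{itemize}
The useful consequence is that for $s\in S$ we have $s\in\fN(T)$ if and only if $s\in\fN(S)$. Equivalently, the projection $T\to S$, $s+i\mapsto s$, is a ring homomorphism with kernel $I\subseteq\fN(T)$. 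This gives $T/\fN(T)\cong S/\fN(S)$, with $\bar s$ corresponding to $\bar s$ for $s\in S$.

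With this isomorphism available, the cleanest route is to observe that for $s\in S$,
\[
s\in\areg(T)\iff \bar s\in\reg(T/\fN(T))\iff \bar s\in\reg(S/\fN(S))\iff s\in\areg(S),
\]
which gives the equality immediately.

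If a hands-on argument is preferred, I would check the two inclusions directly.
\begin{itemize}
\item Inclusion $\areg(T)\cap S\subseteq\areg(S)$: let $s\in\areg(T)\cap S$ and $a\in S$ with $sa\in\fN(S)$. Then $sa\in\fN(T)$, so $a\in\fN(T)\cap S=\fN(S)$.
\item Inclusion $\areg(S)\subseteq\areg(T)$: let $s\in\areg(S)$ and $t=a+i\in T$ with $st\in\fN(T)$. Since $st=sa+si$ with $sa\in S$ and $si\in I$, the decomposition of $\fN(T)$ gives $sa\in\fN(S)$. Hence $a\in\fN(S)$, and therefore $t\in\fN(S)+I=\fN(T)$.
\end{itemize}

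The only step with real content is the description $\fN(T)=\fN(S)+I$, specifically the fact that the $S$-component of a nilpotent element of $T$ is nilpotent. I expect no serious obstacle beyond this. It rests on the standard facts that a sum of nilpotents is nilpotent in a commutative ring, and that $S\cap I=\{0\}$ makes the decomposition $t=s+i$ unique.
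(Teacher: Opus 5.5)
Your proposal is correct, and your hands-on check of the two inclusions is essentially the paper's own proof: the paper likewise argues that $sa=st-si$ is nilpotent because $si\in I$ is nil, and uses $\fN(T)\cap S=\fN(S)$. Your shorter route through $T/\fN(T)\cong S/\fN(S)$ is the same observation the paper itself invokes later when it treats almost complemented $S+I$ rings, so nothing is missing.
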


\begin{proof}
Let $s\in \areg(T)\cap S$. Then $s\notin \fN(S)$. Let $r\in S$ satisfy $rs\in \fN(S)$. Then $rs\in\fN(T)$ and so $r\in \fN(T)$ and thus also $r\in \fN(S)$. It follows that $s\in \areg(S)$.
\vspace{.1in}

Conversely, suppose $s\in\areg(S)$. Then $s\notin \fN(T)$. Let $r+j\in T$ satisfy $s(r+j)\in\fN(T)$. Since $j$ is nilpotent it follows that $sr\in\fN(T)$ and so $sr\in\fN(S)$, thus $r\in \fN(T)$ and also $r\in \fN(S)$.
\end{proof}

\begin{theorem}\label{fishD}
Let $T=S+I$ and suppose that $I$ is a non-zero nil ideal of $T$. Then the following are equivalent.
\begin{enumerate}[label={\rm \arabic*.}, nolistsep]
\item $T$ is semi-complemented.
\item $T$ has Property $D$.
\item $S$ has Property $D$ and $I$ is a torsion-free $S$-module.
\end{enumerate}
\end{theorem}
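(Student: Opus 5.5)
We prove the cycle 1.$\Rightarrow$2.$\Rightarrow$3.$\Rightarrow$1.

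\emph{1.$\Rightarrow$2.} Since $I\neq\{0\}$ is nil, $T$ is not reduced, so $T$ is not complemented. Theorem \ref{main} then forces $T$ to satisfy Property $D$.

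\emph{2.$\Rightarrow$3.} Assume $T\ssm\fN(T)=\reg(T)$. We first check the nilradicals. Every element of $I$ is nilpotent, and $T/I\cong S$. It follows that $\fN(T)=\fN(S)+I$, and in particular $\fN(T)\cap S=\fN(S)$.

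\emph{Property $D$ for $S$.} Take $s\in S\ssm\fN(S)$. Then $s\notin\fN(T)$, so $s\in\reg(T)$. Suppose $rs=0$ with $r\in S$. Regularity of $s$ in $T$ gives $r=0$, so $s\in\reg(S)$.

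\emph{Torsion-freeness of $I$.} Take $s\in\reg(S)$ and $i\in I$ with $si=0$. Since $s$ is regular in $S$ it is not nilpotent (the ring is nonzero), so $s\notin\fN(T)$. Property $D$ for $T$ gives $s\in\reg(T)$, hence $i=0$.

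\emph{3.$\Rightarrow$1.} Property $D$ for $T$ already implies $T$ is semi-complemented, so it suffices to prove 3.$\Rightarrow$2. Take $t=s+i\in T\ssm\fN(T)$.
\begin{itemize}
\item Since $i$ is nilpotent, $s\notin\fN(T)$, and hence $s\notin\fN(S)$.
\item Property $D$ for $S$ gives $s\in\reg(S)$.
\item Lemma \ref{tf} and torsion-freeness give $s\in\reg(T)$.
\item Lemma \ref{MainLemma}(2) says a regular plus a nilpotent is regular, so $t=s+i\in\reg(T)$.
\end{itemize}
Thus $T\ssm\fN(T)\subseteq\reg(T)$. Conversely, a regular element of the nonzero ring $T$ is not nilpotent, so $\reg(T)\subseteq T\ssm\fN(T)$ and equality holds.

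The main obstacle is the forward direction 1.$\Rightarrow$2., which depends entirely on Theorem \ref{main}. The remaining steps are bookkeeping with the decomposition $t=s+i$, the identity $\fN(T)=\fN(S)+I$, and Lemma \ref{tf}.
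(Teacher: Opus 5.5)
Your proof is correct and follows essentially the same route as the paper: Theorem \ref{main}, together with the fact that $T$ is not reduced, gives the passage from semi-complemented to Property $D$, and the remaining implications are the same bookkeeping with $t=s+i$, $\reg(T)\cap S=\reg(S)$, and Lemma \ref{MainLemma}(2). The only difference is organizational: you run the cycle $1\Rightarrow 2\Rightarrow 3\Rightarrow 1$ and check torsion-freeness of $I$ directly, whereas the paper goes $1\Rightarrow 3\Rightarrow 2\Rightarrow 1$ and leaves torsion-freeness implicit via Lemma \ref{tf}.
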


\begin{proof}
1. $\Rightarrow$ 3. Suppose $T$ is semi-complemented. Then since $T$ has non-zero nilpotent elements it follows from Theorem \ref{main} $T$ has Property $D$. Let $s\in S\ssm \fN(S)$. Then $s\in T\ssm \fN(T)$ and so $s\in \reg(T)\cap S\subseteq \reg(S)$. Therefore, $S$ has Property $D$ and $I$ is a torsion-free $S$-module.
\vspace{.1in}

3. $\Rightarrow$ 2. Let $t=s+i\in T\ssm \fN(T)$. Now, $s$ is either a regular element of $S$ and hence of $T$, or it is nilpotent. In the first case, $t$, being the sum of a regular and nilpotent, is regular. In the second case $t$ is nilpotent. Therefore, $T$ has Property $D$ and hence is semi-complemented.
\vspace{.1in}

2. $\Rightarrow$ 1. Obvious
\end{proof}

\begin{corollary}\label{CorfishD}
1) For a ring $R$ and an $R$-module $M$, $R\propto M$ is semi-complemented if and only if $R$ satisfies Property $D$ and $M$ is a torsion-free $R$-module.
\vspace{.1in}

2) Let $R$ be a ring and set $T=R[x]/(x^{n+1})$ with $n\geq 1$. Then $T$ is semi-complemented if and only if $R$ satisfies Property $D$.
\vspace{.1in}

3) Consider $Q=R[\{x_i\}]$ the polynomial ring over $R$ in a countable number of indeterminates. Let $J$ be the ideal generated by the set $\{x_n^{n+1}\}$ and set $T=Q/J$. Then the ideal $I$ of $T$ generated by the set $\{x_n\}$ is a nil ideal and a torsion-free $R$-module; also $T=R+I$. Therefore, $T$ satisfies Property $D$ if and only if $R$ satisfies Property $D$.
\vspace{.1in}

\end{corollary}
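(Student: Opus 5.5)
Each part of Corollary \ref{CorfishD} will be obtained by writing the ring in the form $T=S+I$ with $I$ a non-zero nil ideal and then applying Theorem \ref{fishD}. What remains in each case is to identify $S$ and $I$, check that $I$ is nil and non-zero, and then decide whether $I$ is a torsion-free $S$-module, or show that it is automatically so under Property $D$.

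For 1), take $T=R\propto M$, $S=R$ and $I=0\propto M$. Since $(0,m)^2=(0,0)$, the ideal $I$ is nil. If $M=0$, then $T\cong R$. In that case I read the statement as requiring $M\neq 0$, or I note that the zero module is trivially torsion-free and appeal to Theorem \ref{main}. Assuming $M\neq 0$, Theorem \ref{fishD} gives that $T$ is semi-complemented if and only if $R$ has Property $D$ and $M$ is torsion-free as an $R$-module. The $R$-module structure of $I$ coincides with that of $M$, so this is exactly the claim. Lemma \ref{tf} makes the identification of the two torsion-free conditions transparent.

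For 2), take $S=R$ and $I=(x)/(x^{n+1})$, so that every element is $r_0+r_1x+\cdots+r_nx^n$. Since $n\ge1$, $I$ is non-zero, and since $x^{n+1}=0$, $I$ is nil. It remains to show that $I$ is torsion-free over $R$ whenever $R$ has Property $D$. As an $R$-module, $I$ is free with basis $x,\dots,x^n$. If $r\in\reg(R)$ and $r\sum r_jx^j=0$, then $rr_j=0$ for every $j$, hence each $r_j=0$. So $I$ is always torsion-free, and Theorem \ref{fishD} yields the equivalence.

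For 3), I will show that $T=Q/J$ decomposes as $R+I$ with $R\cap I=0$. The key observation is that $J$ is a monomial ideal. Hence $T$ is a free $R$-module whose basis consists of the monomials $\prod x_n^{e_n}$ with $e_n\le n$ and only finitely many $e_n$ non-zero. Its constant part is $R$, and the span of the non-constant monomials is $I$. Each generator $x_n$ of $I$ is nilpotent, so $I$ is nil. The same freeness argument as in 2) shows that $I$ is a torsion-free $R$-module: a regular $r$ annihilates a basis expansion only if every coefficient vanishes. Theorem \ref{fishD} then gives that $T$ has Property $D$ if and only if $R$ does.

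I expect the main obstacle to be justifying the free monomial basis of $Q/J$, that is, that $S\cap I=0$ and that coefficients are unique. I would argue this by noting that a monomial ideal contains a polynomial exactly when it contains each of that polynomial's monomials.
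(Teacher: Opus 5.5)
Your proposal is correct and follows the paper's implicit argument: each part is Theorem \ref{fishD} applied with $S=R$ and the evident nil ideal, and in 2) and 3) torsion-freeness is automatic because $I$ is a free $R$-module on the non-constant standard monomials. The one fix is the $M=0$ aside in 1): there the statement is genuinely false (e.g.\ $R=\Z\times\Z$ is complemented, hence semi-complemented, but lacks Property $D$), so the fallback via Theorem \ref{main} cannot work and $M\neq 0$ must be assumed, matching the hypothesis $I\neq 0$ in Theorem \ref{fishD}.
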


We continue with the case that $I$ is nil and classify when $T=S+I$ is either almost complemented or $\pi$-complemented. We leave the proof of the following lemma to the interested reader.

\begin{lemma}
Suppose that $I$ is nil and $T=S+I$, and let $t=s+i\in T$. The following are equivalent.
\begin{enumerate}[label={\rm \arabic*.}, nolistsep]
\item $t\in\areg(T)$.
\item $s\in\areg(T)$
\item $s\in\areg(S)$
\end{enumerate}
\end{lemma}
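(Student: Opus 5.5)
The plan is to prove the chain $1\Leftrightarrow 2$ and $2\Leftrightarrow 3$. The second equivalence is essentially Lemma \ref{aregnil}, which gives $\areg(T)\cap S=\areg(S)$ whenever $I$ is nil; since $s\in S$, the conditions $s\in\areg(T)$ and $s\in\areg(S)$ coincide. So the real content is $1\Leftrightarrow 2$, that is, showing that $t$ and its $S$-component $s$ are simultaneously aregular.

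For this I would use the internal description of aregularity: $x\in\areg(T)$ if and only if $xa\in\fN(T)$ implies $a\in\fN(T)$. Equivalently, $x\in\areg(T)$ if and only if $\bar x$ is regular in $\bar T=T/\fN(T)$. The key observation is that $i\in I\subseteq\fN(T)$, so in $\bar T$ we have $\bar t=\bar s+\bar i=\bar s$. Hence $\bar t$ is regular in $\bar T$ exactly when $\bar s$ is, which is precisely $t\in\areg(T)\Leftrightarrow s\in\areg(T)$.

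If I want to avoid passing to the quotient, the same argument can be run elementwise. Suppose $ta\in\fN(T)$. Then $sa=ta-ia$ is a difference of two nilpotents in a commutative ring, so $sa\in\fN(T)$; the converse direction is symmetric. One also checks non-nilpotence: $t\in\fN(T)$ if and only if $s\in\fN(T)$, again because $i$ is nilpotent.

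I expect no genuine obstacle. The only point needing care is remembering that $\fN(T)$ is an ideal, so that adding the nilpotent $i$ (or $ia$) does not affect membership in $\fN(T)$. Once that is noted, the heavy lifting for $2\Leftrightarrow 3$ has already been done in Lemma \ref{aregnil}.
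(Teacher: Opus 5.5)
Your proposal is correct: you get $2\Leftrightarrow 3$ from Lemma \ref{aregnil}, and $1\Leftrightarrow 2$ from the fact that $\bar t=\bar s$ in $T/\fN(T)$ because $i\in I\subseteq\fN(T)$. The paper leaves this lemma to the reader, and your argument is essentially the intended one, since the paper itself uses the identification $T/\fN(T)=S/\fN(S)$ in the proposition that immediately follows.
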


\begin{proposition}
Let $T=S+I$ and $I$ a nil ideal.

1) $T$ is almost complemented if and only if $S$ is almost complemented.
\vspace{.1in}

2) $T$ is $\pi$-complemented if and only if $S$ is $\pi$-complemented and $I$ is a torsion-free $S$-module.
\vspace{.1in}

3) $T$ has Property $D^\flat$ if and only if $S$ has Property $D^\flat$.
\end{proposition}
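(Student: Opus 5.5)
The plan rests on one reduction. When $I$ is nil, $\fN(T)=\fN(S)+I$, and the projection $\pi\colon T\to S$, $s+i\mapsto s$, induces an isomorphism $T/\fN(T)\cong S/\fN(S)$. Indeed, $\pi$ is a surjective ring homomorphism with kernel $I\subseteq\fN(T)$, and it carries $\fN(T)$ onto $\fN(S)$. All three parts will be reduced to this isomorphism, together with the preceding lemma ($s+i\in\areg(T)\iff s\in\areg(S)$) and Lemma \ref{tf}.

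For 1), note that almost complementedness is by definition a property of $T/\fN(T)$. The isomorphism above therefore gives the equivalence immediately. For 3), Property $D^\flat$ for $T$ says that $T/\fN(T)$ is an integral domain, and this holds exactly when $S/\fN(S)$ is one, i.e.\ when $S$ has Property $D^\flat$.

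For 2), I use condition 5 of Theorem \ref{Thm-pi.c.}: a ring is $\pi$-complemented if and only if it is almost complemented and $\reg=\areg$ (nearly reduced). By 1), it then suffices to show that $\reg(T)=\areg(T)$ if and only if $\reg(S)=\areg(S)$ and $I$ is torsion-free.

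For the forward direction, assume $T$ is nearly reduced and let $s\in\areg(S)$. By Lemma \ref{aregnil}, $s\in\areg(T)=\reg(T)$, so $s\in\reg(T)\cap S$. Since $sr=0$ with $r\in S$ forces $r=0$, we get $s\in\reg(S)$, and also $s$ kills no nonzero element of $I$. This yields $\areg(S)\subseteq\reg(S)$. For torsion-freeness, take $r\in\reg(S)$; then $r\in\areg(S)$, because $\reg\subseteq\areg$ always holds. By Lemma \ref{aregnil}, $r\in\areg(T)=\reg(T)$, so $r$ annihilates no nonzero element of $I$.

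For the converse, let $t=s+i\in\areg(T)$. By the preceding lemma $s\in\areg(S)=\reg(S)$. Lemma \ref{tf}, using torsion-freeness, then gives $s\in\reg(T)$. Finally, $t=s+i$ is the sum of a regular element and a nilpotent one, so it is regular by Lemma \ref{MainLemma}(2). Hence $\areg(T)\subseteq\reg(T)$, and the reverse inclusion is automatic.

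The main obstacle is the small verification that $\fN(T)=\fN(S)+I$, which underpins the isomorphism $T/\fN(T)\cong S/\fN(S)$. The inclusion $\supseteq$ holds because a sum of nilpotents is nilpotent. For $\subseteq$, if $(s+i)^n=0$, then projecting gives $s^n=\pi\bigl((s+i)^n\bigr)=0$, so $s\in\fN(S)$. Everything else is bookkeeping with the cited lemmas.
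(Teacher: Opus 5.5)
Your proof is correct and follows essentially the same route as the paper's own terse argument. Parts 1) and 3) come from the identification $T/\fN(T)\cong S/\fN(S)$, and part 2) from combining 1) with Lemma \ref{aregnil} and condition 5 of Theorem \ref{Thm-pi.c.}; your verification of $\fN(T)=\fN(S)+I$ and your use of Lemma \ref{tf} and Lemma \ref{MainLemma}(2) for the nearly-reduced condition fill in details the paper leaves implicit.
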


\begin{proof}
For the proof of 1) and 3) observe that since $I$ is nil, $T/\fN(T)=S/\fN(S)$.

For 2), use 1), Lemma \ref{aregnil} and Theorem \ref{Thm-pi.c.}.
\end{proof}

\begin{example}\label{main-example}
Consider $R=\Z\propto \Z_5$. This ring satisfies Property $D^\flat$, and hence is an almost complemented ring. The element $(5^k,n)$ is not regular for any $n\in \Z_5$ and any $k\in\N$: $(5^k,n)(0,1)=(0,0)$. Furthermore, the element $(5^k,n)$ is also not complemented. To see this first observe that if $(0,0)=(5^k,n)(z,m)=(5^kz,zn)$, then $z=0$. Secondly, $(5^k,n)+(0,m)=(5^k,n+m)$ which is not regular. Consequently, $R$ is not $\pi$-complemented.
\end{example}
\vspace{.1in}

\begin{example}\label{ex2}
Observe that if $R$ is any complemented ring that is not an integral domain, then $R\propto R$ is a $\pi$-complemented ring that is not semi-complemented. Moreover, if $R$ is a von Neumann regular ring that is not a field, then for any non-trivial ideal $I$, $R\propto R/I$ is a $\pi$-complemented ring that is not semi-complemented.

\end{example}

\begin{remark}
There are many examples of torsion-free $R$-modules: any ring extension $R\leq S\leq Q(R)$ where $Q(R)$ is the maximal ring of quotients of $R$, as well as any product of such rings. The polynomial rings and power series rings over $R$ are also torsion-free $R$-modules. Any non-zero ideal of $R$ is torsion-free. On the other hand the module $M=R/I$ for some proper ideal $I$ need not be torsion-free. For $M$ to be torsion-free, $I$ must satisfy the property that if $x\notin I$, then $rx\notin I$ for all $r\in \reg(R)$. In particular, it is necessary that $I$ cannot contain any regular elements. However, this is not sufficient. For example, suppose $R$ is complemented and $I$ is not a radical ideal and contains no regular elements. Let $x\in R$ satisfy $x\notin I$ but $x^2\in I$. Then for any complement $y$ of $x$, $(x+y)x\in I$.
\vspace{.1in}

For a specific example, let $R=C(\R)$ and let $f\in C(\R)$ be the function defined by
$$f(x) =\begin{cases}
    x & \mbox{if } x\geq 0 \\
    0, & \mbox{otherwise}.
  \end{cases}$$
Then $I=f^2C(\R)$ does not contain any regular elements yet $C(\R)/I$ is not torsion-free.
\vspace{.1in}

It follows that if $R$ is semi-complemented and $R/I$ is torsion-free, then either $I$ is a radical ideal or $I\subseteq \fN(R)$. When $R$ is reduced we get a partial converse. If $I$ is an intersection of minimal prime ideals, say $I=\cap P_i$, then $R/I$ is torsion-free, since if $rx\in I$ and $r\in \reg(R)$, then because $r\notin P_i$ for each $i$, we gather that $x\in I$. For example, $R/\fN(R)$ is a torsion-free $R$-module.

We now point out that when $R\propto R/I$ is semi-complemented, then it is necessary that $I$ be a nil ideal. Example \ref{ex2} shows that we cannot extend this to $\pi$-complemented rings.
\end{remark}

\begin{proposition}
Let $T=R\propto R/I$, where $I$ is a proper ideal of $R$. The following are equivalent:
\begin{enumerate}
\item $T$ is semi-complemented.
\item $T$ is semi-complemented and $I\subseteq {\rm nil}(R)$.
\item $T$ satisfies Property $D$.
\item $R$ satisfies Property $D$ and $R/I$ is torsion-free.
\end{enumerate}
\end{proposition}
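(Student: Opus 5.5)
We prove the cycle $(1)\Rightarrow(2)\Rightarrow(3)\Rightarrow(4)\Rightarrow(1)$, working with $T=R\propto M$ where $M=R/I$. In $T$ we have $(r,m)^2=(r^2,2rm)$, so $(r,m)$ is nilpotent if and only if $r\in\fN(R)$. Hence $\fN(T)=\fN(R)\propto M$. In particular $0\propto M$ is a nil ideal, and it is non-zero because $I$ is proper. Thus $T=R+(0\propto M)$ is an instance of the $S+I$ construction with a non-zero nil ideal, and Theorem \ref{fishD}, or more directly part 1) of Corollary \ref{CorfishD}, applies.

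The equivalences $(1)\Leftrightarrow(3)\Leftrightarrow(4)$ then follow at once. Corollary \ref{CorfishD} 1) states that $T$ is semi-complemented if and only if $R$ has Property $D$ and $R/I$ is a torsion-free $R$-module. Theorem \ref{fishD} gives that $T$ semi-complemented is equivalent to $T$ having Property $D$, since the ideal $0\propto R/I$ is non-zero and nil. The implication $(2)\Rightarrow(1)$ is trivial. So the only real content is $(1)\Rightarrow(2)$, or equivalently $(4)\Rightarrow I\subseteq\fN(R)$.

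For this step I would argue by contradiction. Assume $R$ has Property $D$ and $R/I$ is torsion-free, and pick $x\in I\ssm\fN(R)$. By Property $D$, $x\in\reg(R)$. For any $m=r+I\in R/I$ we get $xm=xr+I=I=0$ in $R/I$, since $x\in I$. Torsion-freeness then forces $m=0$ for every $m$, so $R/I=0$, that is, $I=R$. This contradicts properness, so $I\subseteq\fN(R)$.

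I expect no serious obstacle. The only points needing care are checking that the nilradical of the trivial extension is $\fN(R)\propto M$, so that the hypotheses of Theorem \ref{fishD} hold, and the observation that a regular element lying in $I$ annihilates $R/I$. One could also argue (1)$\Rightarrow$(2) directly from Theorem \ref{main}: $T$ is not reduced, so it has Property $D$, and any $(x,0)$ with $x\in I\ssm\fN(R)$ is non-nilpotent yet kills $(0,1+I)\neq 0$, a contradiction.
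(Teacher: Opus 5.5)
Your proposal is correct and follows essentially the paper's route: the paper gives no separate proof of this proposition. It relies on Corollary~\ref{CorfishD}~1), that is, Theorem~\ref{fishD} applied to the non-zero nil ideal $0\propto R/I$, for the equivalence of (1), (3) and (4). For the remaining condition it uses the preceding remark that torsion-freeness of $R/I$ forbids $I$ from containing a regular element, which together with Property $D$ gives $I\subseteq\fN(R)$, exactly as you argue.
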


\begin{corollary}
The ring $R\propto R/\fN(R)$ is semi-complemented if and only if $R$ satisfies Property $D$.
\end{corollary}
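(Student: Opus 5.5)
The plan is to apply the preceding Proposition with $I=\fN(R)$. Since $\fN(R)$ is always a proper ideal of $R$ (rings are assumed nonzero), that Proposition says $R\propto R/\fN(R)$ is semi-complemented if and only if $R$ satisfies Property $D$ and $R/\fN(R)$ is a torsion-free $R$-module. So the whole proof comes down to one claim: if $R$ satisfies Property $D$, then $R/\fN(R)$ is automatically torsion-free. The converse direction needs nothing new, since it is just the implication $(1)\Rightarrow(4)$ of the Proposition.

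To prove the claim, I take $r\in\reg(R)$ and $x\in R$ with $rx\in\fN(R)$, and aim to show $x\in\fN(R)$. The key observation is that every regular element is aregular, regardless of Property $D$. Indeed, suppose $r\in\reg(R)$ and $ra\in\fN(R)$. Then $(ra)^n=r^na^n=0$ for some $n$. Since $r^n$ is regular, this forces $a^n=0$, so $a$ is nilpotent. Applying this with $a=x$ gives $x\in\fN(R)$, which is exactly torsion-freeness of $R/\fN(R)$. This argument uses only the inclusion $\reg(R)\subseteq\areg(R)$; Remark \ref{remark-reg} already points out this inclusion, and the remark following the Proposition (that $R/\fN(R)$ is torsion-free) also anticipates it.

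With the claim in hand, the two directions follow at once. If $R$ satisfies Property $D$, then condition (4) of the Proposition holds, so $T=R\propto R/\fN(R)$ is semi-complemented. If $T$ is semi-complemented, then (4) gives Property $D$ for $R$.

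I do not expect a real obstacle, since everything rests on the previous Proposition. The only step needing care is the reduction $r^na^n=0\Rightarrow a^n=0$, which holds because powers of regular elements are regular. If one instead argues directly through Theorem \ref{fishD} with $S=R$ and the nil ideal $I=0\propto R/\fN(R)$, the same torsion-freeness check is needed. In that case one should also note that $I$ is non-zero whenever $\fN(R)\neq R$, which always holds.
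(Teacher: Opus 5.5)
Your proposal is correct and is essentially the paper's argument: the corollary comes from specializing the preceding Proposition to $I=\fN(R)$, with the needed torsion-freeness of $R/\fN(R)$ recorded in the remark just before that Proposition. The only cosmetic difference is how torsion-freeness is justified: the paper notes that a regular element lies in no minimal prime, whereas you use $\reg(R)\subseteq\areg(R)$ via $r^na^n=0\Rightarrow a^n=0$, and both arguments work.
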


\begin{example}
Here is an example of a ring $R$ satisfying Property $D$ with an ideal $I\subset \fN(R)$ for which $R/I$ is not torsion-free. It follows that $R\propto R/I$ is not semi-complemented.

Let $R=\Z\propto \Z$ and let $I=(0,2)R=0\propto 2\Z$. Then $(0,1)\notin I$ but $(2,0)(0,1)\in I$. This argument generalizes to any ring with a non-zero nilpotent element $n\in \fN(R)$ and an $r\in\reg(R)\ssm U(R)$. Then let $I=rnR$. Observe that $n\notin I$, otherwise, there would exist an $s\in R$ such that $n=rsn$ and so in $\overline{R}$, $rs=1$, implying that $r\in U(R)$, a contradiction. Then $R/I$ is not a torsion-free $R$-module. It follows that if every nil ideal $I$ satisfies $R/I$ is torsion-free, then either $R$ is reduced or $\reg(R)=U(R)$. The latter of these two conditions is equivalent to the saying that every $R$-module is torsion-free.
\end{example}

\begin{proposition}
For a ring $R$ the following statements are equivalent.
\begin{enumerate}[label={\rm \arabic*.}, nolistsep]
\item $R$ has a unique prime ideal.
\item $R\ssm \fN(R)=U(R)$.
\item $R\propto M$ is semi-complemented for all $R$-modules $M$.
\end{enumerate}
\end{proposition}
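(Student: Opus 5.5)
1 $\Leftrightarrow$ 2 is essentially the definition the paper adopted: "$R$ has a unique prime ideal" was declared to mean $R=U(R)\cup\fN(R)$. Since units are never nilpotent (here $1\neq 0$), this union is disjoint, so it is the same as $R\ssm\fN(R)=U(R)$. For completeness I will note the ring-theoretic meaning. If $R\ssm\fN(R)=U(R)$, then every proper ideal lies in $\fN(R)$, so $\fN(R)$ is the unique prime ideal. Conversely, if $\fN(R)$ is the only prime, then every non-nilpotent element lies in no maximal ideal and is therefore a unit.

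2 $\Rightarrow$ 3. Fix an $R$-module $M$ and set $T=R\propto M$, so $T=S+I$ with $S=R$ and $I=0\propto M$. The ideal $I$ is nil, since $(0,m)^2=0$. By part 1) of Corollary \ref{CorfishD}, $T$ is semi-complemented if and only if $R$ has Property $D$ and $M$ is torsion-free. (If $M=0$, then $T\cong R$ and the conclusion follows directly from Property $D$ as well.) Under 2, $\reg(R)\subseteq R\ssm\fN(R)=U(R)\subseteq\reg(R)$, so $\reg(R)=U(R)=R\ssm\fN(R)$. In particular $R$ has Property $D$. Every $R$-module is torsion-free, because $rm=0$ with $r$ a unit forces $m=0$. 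Hence $T$ is semi-complemented.

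3 $\Rightarrow$ 2. This is the step with real content. First apply 3 with $M=0$ (or $M=R$), which shows $R$ has Property $D$; thus $R\ssm\fN(R)=\reg(R)$. It remains to show $\reg(R)=U(R)$. Suppose $r\in\reg(R)\ssm U(R)$, and take $M=R/rR$, a nonzero module since $rR$ is proper. Then $r\cdot\bar 1=0$ with $\bar 1\neq 0$, so $M$ is not torsion-free. By Corollary \ref{CorfishD}(1), $R\propto R/rR$ is not semi-complemented, contradicting 3. Therefore $\reg(R)=U(R)$, and combined with Property $D$ this gives 2.

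The only delicate point is choosing the test module $R/rR$ to witness torsion, and checking that Corollary \ref{CorfishD}(1) applies with $M\neq 0$. That corollary rests on Theorem \ref{fishD}, which needs $I$ nonzero; here $I\neq0$ because $M\neq 0$.
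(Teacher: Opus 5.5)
Your proof is correct and follows the route the paper has in mind. The paper states this proposition without proof, right after remarking that every $R$-module is torsion-free exactly when $\reg(R)=U(R)$; with Corollary~\ref{CorfishD}(1) this gives $3\Leftrightarrow$ (Property $D$ and $\reg(R)=U(R)$) $\Leftrightarrow 2$, which is your argument, including the test module $R/rR$.

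One slip needs fixing. In $3\Rightarrow 2$, the choice $M=0$ does not give Property $D$. Since $R\propto 0\cong R$, condition 3 at $M=0$ only says $R$ is semi-complemented, and by Theorem~\ref{main} that allows $R$ to be complemented without Property $D$ (for example $R=\Z\times\Z$). You must use $M=R$, or any nonzero module, as your own closing remark that $I\neq 0$ is needed already requires.
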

\vspace{.3in}

\subsection{Polynomial and power series rings}

Recall that a polynomial ring $R[x]$ is complemented if and only if $R$ is reduced and has a compact minimal prime spectrum (see \cite[Corollary 4.6]{huckaba}). Internally, the condition that defines the latter is the following: for each $a\in R$ there is a finitely generated ideal $J$ such that $aJ=0$ and $aR+J$ is a dense ideal (i.e. zero annihilator.) The reason that one does not need the full force of $R$ being complemented is that $R[x]$ always has Property $A$. We turn and answer when $R[[x]]$ is complemented as well as address some other issues.

Clearly, $R[x]$ and $R[[x]]$ are both torsion-free $R$-modules. The interesting question which we have been unable to solve is for which $R$ is $R[[x]]$ a torsion-free $R[x]$-module. We do show it for Property $D$.

\begin{lemma}\label{lemmapoly}
If $R$ satisfies Property $D$, then $R[[x]]$ is a torsion-free $R[x]$-module.
\end{lemma}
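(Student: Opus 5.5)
We must show: if $p\in\reg(R[x])$ and $f\in R[[x]]$ with $pf=0$, then $f=0$. The plan is to reduce to the statement that a regular polynomial over a ring with Property $D$ has a regular coefficient. Note first that Property $D$ means every element of $R$ is either nilpotent or regular.

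\emph{Step 1: some coefficient of $p$ is not nilpotent.} Write $p=a_0+a_1x+\cdots+a_nx^n$. If every $a_i$ were nilpotent, then $p$ would be nilpotent in $R[x]$. A nonzero ring has no regular nilpotent elements, so this contradicts $p\in\reg(R[x])$. Hence some $a_i\notin\fN(R)$, and by Property $D$ that $a_i$ is regular in $R$. Let $k$ be the \emph{smallest} index with $a_k\notin\fN(R)$, so $a_k\in\reg(R)$ while $a_0,\dots,a_{k-1}$ are all nilpotent.

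\emph{Step 2: split off the nilpotent part.} Write $p=q+x^ku$, where $q=a_0+\cdots+a_{k-1}x^{k-1}$ is nilpotent in $R[x]$. Then $u=a_k+a_{k+1}x+\cdots$ has regular constant term $a_k$.

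First, $x^ku$ is regular in $R[[x]]$. Indeed $x$ is regular there, and if $ug=0$ with $g\neq 0$ of lowest-order term $g_mx^m$, then the coefficient of $x^m$ in $ug$ is $a_kg_m=0$. This forces $g_m=0$, a contradiction.

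Next, $q$ is nilpotent in $R[[x]]$ as well. By Lemma \ref{MainLemma}(2), applied in the ring $R[[x]]$, the element $p=x^ku+q$ is the sum of a regular element and a nilpotent, hence regular in $R[[x]]$. Therefore $pf=0$ forces $f=0$, which is exactly the torsion-freeness.

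\emph{Expected obstacle.} The only delicate point is Step 2: we need regularity of $p$ in $R[[x]]$, not merely in $R[x]$. Choosing the minimal index $k$ handles this. It makes the low-order part genuinely nilpotent, so that the lowest-order-term argument gives regularity of $x^ku$, and Lemma \ref{MainLemma}(2) finishes. Lemma \ref{MainLemma}(2) is stated for an arbitrary ring, so it may be applied to $R[[x]]$.
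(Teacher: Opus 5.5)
Your proof is correct and follows the paper's route: you reduce to $\reg(R[x])\subseteq\reg(R[[x]])$ and use Property $D$ to produce a regular coefficient. The only difference is that the paper delegates the final step to \cite[Lemma 2]{fields}, whereas you prove it directly by taking the lowest non-nilpotent coefficient and applying Lemma \ref{MainLemma}(2) in $R[[x]]$ (the same splitting the paper uses for Proposition \ref{poly}); insisting on the minimal index is the right call, since in general a polynomial with some regular coefficient can still be a zero-divisor in $R[[x]]$.
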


\begin{proof}
By Lemma \ref{tf} it suffices to show that each regular element of $R[x]$ is a regular element of $R[[x]]$. To that end, let $f(x)\in\reg(R[x])$ and let $f(x)=f_0+f_1x+\cdots + f_nx^n$. Then at least one of the coefficients of $f(x)$ is not nilpotent, and therefore is regular in $R$. By \cite[Lemma 2]{fields}, $f(x)\in \reg(R[[x]])$.
\end{proof}

\begin{proposition}\label{poly}
Let $R$ be a ring. Then $R$ satisfies Property $D$ if and only if $R[x]$ satisfies Property $D$.
\end{proposition}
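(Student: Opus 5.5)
We prove the two implications separately, using the standard facts that $\fN(R[x])=\fN(R)[x]$ and McCoy's theorem: a polynomial that is a zero-divisor in $R[x]$ is annihilated by a nonzero constant $c\in R$.

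\emph{($\Leftarrow$)} Suppose $R[x]$ satisfies Property $D$. Let $a\in R\ssm\fN(R)$. Viewed as a constant polynomial, $a\notin\fN(R[x])$, so $a\in\reg(R[x])$. Every nonzero $c\in R$ with $ac=0$ would also kill $a$ in $R[x]$, so no such $c$ exists. Hence $a\in\reg(R)$, and $R$ satisfies Property $D$.

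\emph{($\Rightarrow$)} Suppose $R\ssm\fN(R)=\reg(R)$, and let $f=f_0+f_1x+\cdots+f_nx^n\in R[x]\ssm\fN(R[x])$. Since $\fN(R[x])=\fN(R)[x]$, some coefficient of $f$ is not nilpotent, hence regular in $R$. Let $k$ be the smallest index with $f_k\notin\fN(R)$. Write $f=g+h$, where $g=f_0+\cdots+f_{k-1}x^{k-1}$ and $h=f_kx^k+\cdots+f_nx^n$. The polynomial $g$ is nilpotent, so by (2) of Lemma \ref{MainLemma} it suffices to show that $h$ is regular. Since $x^k$ is regular in $R[x]$, this reduces to showing that $h'=f_k+f_{k+1}x+\cdots$, whose constant term $f_k$ is regular in $R$, is regular in $R[x]$.

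This last step is the main point. Suppose $h'p=0$ with $p\neq0$. By McCoy's theorem there is a nonzero $c\in R$ with $ch'=0$, so $cf_k=0$, which contradicts the regularity of $f_k$. Alternatively, one can avoid McCoy: take $p=p_mx^m+\cdots$ with $p_m\neq 0$ its lowest nonzero coefficient. The lowest-degree term of $h'p$ is $f_kp_m x^m$, so $f_kp_m=0$, and regularity of $f_k$ forces $p_m=0$, a contradiction. Either way, $h'$ is regular in $R[x]$.

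Hence $f$ is regular, and $R[x]$ satisfies Property $D$.
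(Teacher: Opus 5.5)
Your proof is correct and follows the paper's own argument. Like the paper, you split $f$ at the first non-nilpotent coefficient $f_k$ into a nilpotent part $g$ and a part $h$ whose coefficient $f_k$ is regular in $R$, then conclude with (2) of Lemma~\ref{MainLemma}. You go further than the paper in two places: you justify $h\in\reg(R[x])$ (by McCoy or by the lowest-coefficient argument), which the paper asserts without comment, and you get $f_k\in\reg(R)$ directly from Property $D$ of $R$, whereas the paper's phrasing reads as if it used Property $D$ of $R[x]$.
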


\begin{proof}
The sufficiency is clear. For the necessity, let $f(x) = \sum_{i=0}^n a_ix^i \in R[x]\ssm \fN(R[x])$. Then there exists $0\leq k\leq n$ such that $a_k \notin \fN(R)$ and $a_i\in \fN(R)$ for $i = 0, \ldots , k-1$. Since $a_k\notin \fN(R)$ it also is not in $\fN(R[x])$ and thus, $a_k\in \reg(R[x])\cap R=\reg(R)$. Therefore, $g(x) = \sum^{k-1}_{i=0} a_ix^i \in \fN(R[x])$, and $h(x) = \sum_{i=k}^n a_ix^i \in \reg(R[x])$. Consequently, $f(x) = g(x) + h(x) \in\reg(R[x])$ by Lemma \ref{MainLemma}.
\end{proof}

\begin{proposition}
Suppose $R$ satisfies Property $D$. Then, $R[x]\propto R[[x]]$ satisfies Property $D$.
\end{proposition}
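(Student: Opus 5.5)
The plan is to apply Theorem \ref{fishD} with $T=R[x]\propto R[[x]]$, $S=R[x]$ (identified with $R[x]\propto 0$) and $I=0\propto R[[x]]$. The ideal $I$ is non-zero. It is nil, indeed $I^2=0$, because the multiplication on the module part of a trivial extension is trivial. Also $T=S+I$ with $S\cap I=\{0\}$. By part 1) of Corollary \ref{CorfishD}, or directly by the equivalence 2.$\Leftrightarrow$3. of Theorem \ref{fishD}, $T$ has Property $D$ if and only if two conditions hold: $R[x]$ has Property $D$, and $R[[x]]$ is a torsion-free $R[x]$-module.

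First, since $R$ satisfies Property $D$, Proposition \ref{poly} gives that $R[x]$ satisfies Property $D$. Second, Lemma \ref{lemmapoly} gives that $R[[x]]$ is a torsion-free $R[x]$-module under the natural action by multiplication. Here I will check that the $R[x]$-module structure on $R[[x]]$ used in the trivial extension is the one from Lemma \ref{lemmapoly}: $f\cdot g=fg$ computed in $R[[x]]$. That lemma shows each regular $f\in R[x]$ is regular in $R[[x]]$, so $fg=0$ forces $g=0$, which is exactly torsion-freeness.

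With both conditions in hand, condition 3. of Theorem \ref{fishD} holds, and so $T$ has Property $D$. Alternatively one can argue directly: an element $(f,g)\notin\fN(T)$ has $f\notin\fN(R[x])$, so $f\in\reg(R[x])$ by Proposition \ref{poly}. Then $(f,0)$ is regular in $T$, since $(f,0)(h,k)=(fh,fk)=0$ forces $h=0$ by regularity of $f$ in $R[x]$ and $k=0$ by torsion-freeness. Finally, $(f,g)=(f,0)+(0,g)$ is regular plus nilpotent, hence regular by Lemma \ref{MainLemma}(2).

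There is no real obstacle, because the substantive input is Lemma \ref{lemmapoly}, which in turn rests on the cited result from \cite{fields} that a polynomial with some regular coefficient is regular in the power series ring. The only care needed is to make sure the hypotheses of Theorem \ref{fishD} are met: $I\neq 0$, $I$ is nil, and $I$ is torsion-free as a module over $S$.
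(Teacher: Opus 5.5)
Your proposal is correct and is essentially the paper's proof: Proposition \ref{poly} gives Property $D$ for $R[x]$, Lemma \ref{lemmapoly} gives torsion-freeness of $R[[x]]$ over $R[x]$, and Corollary \ref{CorfishD} (equivalently Theorem \ref{fishD}) concludes. One caution about your gloss on that lemma's input: the claim ``a polynomial with some regular coefficient is regular in $R[[x]]$'' is false for general rings. For a field $k$, $R=k[a]\propto M$ with $M=k[a,a^{-1}]/k[a]$, the monic polynomial $x-(a,0)$ annihilates the nonzero series $\sum_{m\ge 0}(0,\overline{a^{-m-1}})x^m$. What Property $D$ actually supplies is that the lowest-index non-nilpotent coefficient $f_k$ is regular while all earlier coefficients are nilpotent. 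Hence $f$ is a nilpotent plus $x^k$ times a series with regular constant term, and is therefore regular in $R[[x]]$.
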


\begin{proof}
Suppose that $R$ satisfies Property $D$. Then so does $R[x]$ by Proposition \ref{poly}. The rest follows by Lemma \ref{lemmapoly} and  Corollary \ref{CorfishD}.
\end{proof}

At this point we have been unable to characterize when $R[[x]]$ has Property $D$. Clearly, it is necessary that $R$ has Property $D$. But it is not sufficient as we now supply an example.

\begin{example} Let $F$ be a field and set $R = F[x_1, x_2, \ldots]/K$ where $K$ is the ideal generated by the collection $\{x_i^{i+1}:i\in \N\}\cup \{x_ix_j:i\neq j\}$. The following hold.
\begin{enumerate}
\item $R/\fN(R) \cong F$; so $R = \fN(R)\cup U(R)$.
\item $\fN(R)$ is not nilpotent.
\item $R[[t]]$ does not have Property $D$.
\end{enumerate}
\end{example}

\begin{proof}
(1) and (2). These are easily proved.

(3) Let $f(t) = x_1+x_2t+\ldots+x_nt^{n-1}+\ldots \in R[[t]]$. First, $f(t)$ is not nilpotent. Indeed,
if $f(t)^k = 0$ for some $k \geq 2$, then, since $(x_1 +x_2t+\ldots+x_{k-1}t^{k-2})^k = 0$, it follows that
$$(x_kt^{k-1}+x_{k+1}t^{k}+x_{k+2}t^{k+1}+\ldots  )^p=[f(x)-(x_1 +x_2t+\ldots+x_kt^{k-1})]^p = 0$$
for some natural $p\in \N$, a contradiction. So $f(t)\in R[[t]]$ is not nilpotent. But $f(t)$ is a zero-divisor as $x_1f(t)=0$.
\end{proof}

To our knowledge, a classification of when the power series over $R$ is a complemented ring has not been expressed in the literature. We need a way, as in the case for polynomials, to determine when a power series is a zero-divisor. For any power series $f(x)\in R[[x]]$, we let $c(f)$ denote the (countably generated) ideal of $R$ generated by the coefficients of $f(x)$.

\begin{lemma}
Let $R$ be a reduced ring and suppose $f(x)=\sum_{i=0}^\infty f_ix^i\in R[[x]]$. Then $f(x)\in\reg(R[[x]])$ if and only if $c(f)$ is a dense ideal of $R$.
\end{lemma}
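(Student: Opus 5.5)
We prove the two implications separately, using that $R$ is reduced and that every minimal prime $P$ of $R$ satisfies $P=\{a\in R: \Ann(a)\not\subseteq P\}$. Recall that $c(f)$ is dense when $\Ann(c(f))=0$.

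\emph{Necessity.} Suppose $c(f)$ is not dense, and pick $0\neq r\in R$ with $rc(f)=0$. Then $rf_i=0$ for every $i$, so the constant series $r$ satisfies $r\,f(x)=\sum_i rf_ix^i=0$. Hence $f(x)$ is a zero-divisor of $R[[x]]$. This direction needs no hypothesis on $R$.

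\emph{Sufficiency.} Assume $c(f)$ is dense, and let $g(x)=\sum_j g_jx^j\neq 0$ satisfy $fg=0$. We aim for a contradiction. The obvious tool is a McCoy-type argument, but McCoy's theorem fails for power series in general, so we use reducedness through minimal primes instead.

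Fix a minimal prime $P$ of $R$. The ring $R/P$ is a domain, hence so is $(R/P)[[x]]$. Reducing $fg=0$ modulo $P$ coefficientwise gives $\bar f\,\bar g=0$ in $(R/P)[[x]]$, so either $\bar f=0$ or $\bar g=0$.

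If $\bar f=0$, then every $f_i$ lies in $P$, so $c(f)\subseteq P$. We now show this is impossible. Since $R$ is reduced, $\bigcap\Min(R)=0$. Because $g\neq 0$, some coefficient $g_j\neq0$, so $g_j\notin Q$ for some minimal prime $Q$.

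\emph{Main obstacle.} This is the crux: we want $g_j$ to annihilate $c(f)$, which would contradict density. Working modulo $Q$, we have $\bar g\neq 0$ in $(R/Q)[[x]]$, so $\bar f=0$ there, i.e. $c(f)\subseteq Q$. That alone does not yet give a contradiction. To get one, we argue over all minimal primes at once. Let $S$ be the set of minimal primes containing $c(f)$. The case analysis above shows every minimal prime $P$ not in $S$ contains all the $g_j$. Hence
$$g_jf_i\in \bigcap_{P\in S}P\cap\bigcap_{P\notin S}P=\bigcap\Min(R)=0$$
for all $i,j$, since $f_i$ lies in every prime of $S$ and $g_j$ lies in every prime outside $S$. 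So $g_jc(f)=0$, and density forces $g_j=0$ for every $j$. Thus $g=0$, a contradiction.

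This argument actually avoids needing the characterization of minimal primes stated at the outset; the only facts used are that $R$ is reduced, so $\bigcap\Min(R)=0$, and that $(R/P)[[x]]$ is a domain.
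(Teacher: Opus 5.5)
Your proof is correct, but it obtains the key fact $c(f)c(g)=0$ differently from the paper; after that, both arguments finish the same way via density.

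The paper works purely with coefficients. From $c_n=\sum_{i+j=n}f_ig_j=0$, it multiplies by suitable coefficients of $f$ and $g$ and uses reducedness to peel off the products $f_ig_j$ one at a time, by induction on $n$. You instead pass to $(R/P)[[x]]$, which is a domain, so every prime $P$ contains $c(f)$ or $c(g)$. You then use $\bigcap\Min(R)=\fN(R)=0$.

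Your route has two advantages. It is shorter, and it avoids the inductive bookkeeping that the paper leaves as ``continuing on in this inductive manner.'' It also shows more: over any commutative ring, $fg=0$ in $R[[x]]$ forces $c(f)c(g)\subseteq\fN(R)$. The set $S$ and the restriction to minimal primes are unnecessary, since every prime works.

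The paper's route is elementwise and uses no prime ideals. So it does not depend on $\fN(R)=\bigcap\Min(R)$, which rests on Zorn's lemma.

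One correction: drop the sentence asserting that $c(f)\subseteq P$ is impossible for a minimal prime $P$. That holds when $c(f)$ is finitely generated, but it fails for power series. Take $R=\prod_{n\in\N}F$ with $F$ a field, and $f=\sum_n\chi_nx^n$. Then $c(f)=\bigoplus_nF\chi_n$ is dense. Yet it lies in every prime ideal arising from a free ultrafilter on $\N$, and all primes of this $R$ are minimal. Your final argument never uses that claim, so the proof stands.
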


\begin{proof}
If $c(f)$ is not a dense ideal, then there is some non-zero $r\in R$ such that $rc(f)=0$. It follows that $rf(x)=0$.
\vspace{.1in}

Conversely, suppose that $f(x)$ is annihilated by $g(x)\in R[[x]]$. For the sake of completeness we show that $c(f)c(g)=0$. (one can find the proof in \cite{brewer}.) Write $g(x)=\sum_{i=0}^\infty g_ix^i$ and write $f(x)g(x)=\sum_{i=0}^\infty c_ix^i$, where $c_n=f_0g_n+f_1g_{n-1}+\cdots + f_{n-1}g_1+ f_ng_0=0$. Observe that $c_0=0$ implies that $f_0g_0=0$. Also, $0=f_0g_1+f_1g_0$ and so multiplying both sides by $f_0$ produces the equation $(f_0g_1)^2=0$ and since $R$ is reduced. We get that $f_0g_1=0$. Similarly, $f_1g_0=0$. Moving to $c_2=f_0g_2+f_1g_1+f_2g_0$, again we first multiply by $f_0$ and observe that $f_0g_2=0$, then multiply by $g_0$ to get that $f_2g_0=0$ and hence $f_1g_1=0$.

Continuing on in this inductive manner shows that $f_ig_j=0$ for all $i,j\in\N$.
\end{proof}

\begin{corollary}
Suppose $R$ is a reduced ring. Then $R[[x]]$ is a torsion-free $R[x]$-module.
\end{corollary}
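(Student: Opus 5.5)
By Lemma \ref{tf}, applied to the decomposition $T=R[[x]]=R[x]+I$, it is enough to show that every regular element of $R[x]$ remains regular in $R[[x]]$. This is the same reduction used in the proof of Lemma \ref{lemmapoly}, now with reducedness in place of Property $D$. Strictly speaking, $R[[x]]$ is not of the form $S+I$ with $S=R[x]$ and $S\cap I=0$. So instead of quoting Lemma \ref{tf} directly, I will verify torsion-freeness from the definition: if $f\in\reg(R[x])$, $g\in R[[x]]$ and $fg=0$, then $g=0$.

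Let $f\in\reg(R[x])$ with $f=f_0+f_1x+\cdots+f_nx^n$. Since $R$ is reduced, McCoy's theorem (or the remark following Theorem \ref{complemented} on Property $A$ of $R[x]$) says that $f$ is a zero-divisor in $R[x]$ exactly when some nonzero $r\in R$ satisfies $rf=0$. As $f$ is regular, no such $r$ exists, so $\Ann(c(f))=0$; that is, $c(f)$ is a dense ideal of $R$.

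Now view $f$ as an element of $R[[x]]$. Its coefficient ideal is the same $c(f)$, which is dense. The preceding Lemma, which applies because $R$ is reduced, then gives $f\in\reg(R[[x]])$. Hence $fg=0$ with $g\in R[[x]]$ forces $g=0$, so $R[[x]]$ is a torsion-free $R[x]$-module.

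The only step that needs care is the first one: turning regularity in $R[x]$ into density of $c(f)$. If I would rather not cite McCoy, I can argue directly. Suppose $rc(f)=0$ for some $r\neq0$; then $rf=0$ in $R[x]$, contradicting regularity. This is exactly the easy direction of the previous Lemma run in $R[x]$, so it needs neither reducedness nor McCoy's theorem. The step that genuinely uses reducedness is the passage from density of $c(f)$ to regularity in $R[[x]]$, and that is supplied by the previous Lemma.
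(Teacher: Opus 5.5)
Your proof is correct and is essentially the paper's: $f\in\reg(R[x])$ forces $c(f)$ to be dense (if $rc(f)=0$ then $rf=0$, so McCoy's theorem is not needed, and it would not require reducedness anyway), and the preceding lemma for reduced $R$ then gives $f\in\reg(R[[x]])$, which is exactly torsion-freeness. Your observation that Lemma \ref{tf} does not literally apply is accurate, since $R[[x]]$ is not of the form $R[x]+I$ with $R[x]\cap I=\{0\}$, and checking the condition directly from the definition is the right fix.
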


\begin{proof}
Let $f(x)\in\reg(R[x])$. Then $c(f)$ is a dense ideal of $R$, and so $f(x)\in \reg(R[[x]])$.
\end{proof}

Our next result characterizes when $R[[x]]$ is a complemented ring. The result is not surprising when you recall the appropriate statement for $R[x]$.

\begin{theorem}
The power series ring $R[[x]]$ is a complemented ring if and only if $R$ is reduced and for each countably generated ideal $I$ there is  countably generated ideal $J$ such that $IJ=0$ and $I+J$ is dense.
\end{theorem}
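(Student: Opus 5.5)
Throughout, the tool is the preceding Lemma: over a reduced ring, $f\in\reg(R[[x]])$ if and only if $c(f)$ is dense. Alongside it I use the coefficient fact from its proof: if $R$ is reduced and $fg=0$ in $R[[x]]$, then $c(f)c(g)=0$. Conversely, $c(f)c(g)=0$ clearly gives $fg=0$.

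\emph{Necessity.} Suppose $R[[x]]$ is complemented. Then $R[[x]]$ is reduced, and hence so is its subring $R$. Let $I$ be countably generated, say by $a_0,a_1,\dots$; repeating generators if needed, we may take the list infinite. Put $f=\sum a_ix^i$, so $c(f)=I$. Choose a complement $g$ of $f$ and set $J=c(g)$, which is countably generated. From $fg=0$ the coefficient fact gives $IJ=0$. Since $f+g$ is regular, the Lemma makes $c(f+g)$ dense. Because $c(f+g)\subseteq c(f)+c(g)=I+J$, the ideal $I+J$ is dense as well.

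\emph{Sufficiency.} Assume $R$ is reduced and has the stated property on countably generated ideals. Note first that $R[[x]]$ is reduced: if $f$ is nilpotent with lowest nonzero coefficient $f_k$, then $f_k^n$ is the lowest coefficient of $f^n$, which is impossible in a reduced ring.

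Given $f$, set $I=c(f)$ and pick $J$ countably generated, with generators $b_0,b_1,\dots$, such that $IJ=0$ and $I+J$ is dense. The naive choice $g=\sum b_ix^i$ gives $fg=0$, since $c(f)c(g)=IJ=0$. However, $c(f+g)$ could be smaller than $I+J$ because coefficients may cancel. This cancellation is the main obstacle, and I would avoid it by interleaving the exponents. Take
$$g=\sum_i b_i x^{2i+1}\quad\text{and replace } f \text{ by } f(x^2)=\sum f_i x^{2i}.$$
Then $c(f(x^2)+g)=I+J$ exactly, so $f(x^2)+g$ is regular by the Lemma, and $f(x^2)g=0$.

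This shows that $f(x^2)$ is complemented, not yet $f$ itself, so one more step is needed. Instead of substituting, I build $h=f+g$ with $g$ chosen so that the coefficients of $f+g$ generate $I+J$ after passing to annihilators. Recall that density of an ideal $K$ only means $\Ann(K)=0$. Suppose $r$ kills every coefficient $f_n+g_n$ of $f+g$. Multiplying $r(f_n+g_n)=0$ by $f_n$ gives $rf_n^2=-rf_ng_n$, and $f_ng_n\in IJ=0$, so $rf_n^2=0$ and hence $(rf_n)^2=0$. Reducedness then gives $rf_n=0$, and therefore also $rg_n=0$.

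Thus for any $g$ with $c(g)=J$ and $fg=0$, we get $\Ann(c(f+g))=\Ann(I+J)=0$, so $f+g$ is regular. It remains only to realize $J$ as the content of some $g$, which is immediate: take $g=\sum b_ix^i$. So cancellation is harmless after all, and the interleaving trick is unnecessary. Hence every $f$ is complemented, and $R[[x]]$ is a complemented ring.
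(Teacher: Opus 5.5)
Your proof is correct and, once the interleaving detour you abandon is discarded, it is essentially the paper's argument. For necessity you take $f=\sum a_ix^i$ with $c(f)=I$, a complement $g$, $J=c(g)$, and use $c(f+g)\subseteq I+J$; for sufficiency you take $g$ with $c(g)=J$ and show that $r(f_n+g_n)=0$ forces $rf_n=rg_n=0$ in a reduced ring. Your explicit computation $rf_n^2=-rf_ng_n=0\Rightarrow rf_n=0$ spells out a step the paper states in one line, and the paragraph with $f(x^2)$ can simply be deleted.
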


\begin{proof}
Suppose $R[[x]]$ is a complemented ring. Then, $R[[x]]$ and hence $R$ is reduced. Let $I$ be a countably generated ideal, and let $f_0,f_1,\ldots,$ be a countable generating set for $I$ and set $f(x)=\sum_{i=0}^\infty f_ix^i$. Note that $c(f)=I$. By hypothesis, there is some complement of $f(x)$ in $R[[x]]$, say $g(x)=\sum_{i=0}^\infty g_ix^i$. It follows that $g(x)f(x)=0$ and $f(x)+g(x)\in \reg(R[[x]])$. The first part of that implies that $Ic(g)=c(f)c(g)=0$ where $c(g)$ is a countably generated ideal. The second part implies that $c(f+g)$ is a dense ideal. Since $c(f+g)\subseteq c(f)+c(g)$ it follows that $c(f)+c(g)$ is also a dense ideal of $R$.
\vspace{.1in}

Conversely, let $f(x)=\sum_{i=0}^\infty f_ix^i\in R[[x]]$. Choose a countably generated ideal $J$ such that $c(f)\cdot J=0$ and $c(f)+J$ is a dense ideal of $R$. Let $g(x)=\sum_{i=0}^\infty g_ix^i\in R[[x]]$ satisfy $c(g)=J$. Then clearly, $f(x)g(x)=0$. We show that $c(f+g)$ is also dense ideal of $R$. Let $r\in R$ be such that $rc(f+g)=0$. It follows that for all $n\in\N$, $r(f_n+g_n)=0$. Since $c(f)c(g)=0$ and $R$ is reduced we gather that for each $n\in\N$, $rf_n=rg_n=0$ and so $r(c(f)+J)=0$. By density, $r=0$.
\end{proof}

\begin{example}
Let $R$ be a reduced ring for which every ideal is countably generated. Then $R[[x]]$ is complemented since for any countably generated ideal $I$, the ideal $\Ann_R(I)$ is also countably generated and clearly $I\Ann_R(I)=0$ and $I+\Ann_R(I)$ is dense. Notice that this holds in the case that $R$ is countable and reduced.
\end{example}

A natural question is whether $R[[x]]$ being complemented implies that $\Min(R)$ is compact. We show that this is not the case.

\begin{example}
Let $A=\Z$ and take $\cP=\Max(\Z)$. The ring $R=A+B$ is reduced but not complemented since $\cP\nsubseteq \Min(\Z)$. However, $R$ does satisfy Property $A$ \cite[Proposition 2.1]{lucas86}. Therefore, we conclude that $\Min(R)$ is not compact. Notice that $B$ is a countable set since $\cI=\cP\times\N$ is a countable set. Since $A$ is also countable it follows that $R$ is a countable ring. Consequently, $R[[x]]$ is complemented.

\end{example}

\section{Roughly Complemented Elements and Rings}
When studying the difference between complemented elements and almost complemented elements, we see that we have two choices for the product $ab=0$ or $ab\in\fN(R)$, and also two choices for the sum: $a+b\in\reg(R)$ or $a+b\in \areg(R)$. The two extremes are complemented and almost complemented. We also know that the condition there is a $b\in R$ such that $ab\in\fN(R)$ and $a+b\in\reg(R)$ is saying that $a$ is $\pi$-complemented. Thus, we are left with one more variation.

\begin{definition}
We say the element $a\in R$ is {\bf roughly complemented} if there is some $b\in R$ such that $ab=0$ and $a+b\in\areg(R)$. (When this happens, we shall say that $a$ and $b$ are rough complements.) If every element of $R$ is roughly complemented, then we say $R$ is roughly complemented.
\end{definition}

Theorem \ref{Thm-pi.c.} characterizes the $\pi$-complemented rings as those rings which are almost complemented and nearly reduced. Recall that being nearly reduced is equivalent to saying that the only nilpotent element annihilated by an aregular element is 0. The other extreme is to say that every nilpotent element is annihilated by an aregular element. Symbolically, this means that $\eta(R)=\fN(R)$. In this case, we say that $R$ is {\bf roughly reduced}. The proof of the next result is clear.

\begin{proposition}
1) The ring $R$ is roughly reduced if and only if $q(R)$ is roughly reduced.

2) The ring $R$ is roughly complemented if and only if $q(R)$ is roughly complemented.
\end{proposition}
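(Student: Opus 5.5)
The plan is to reduce everything to how the relevant sets behave under the canonical map $R\to q(R)$, $r\mapsto \frac{r}{1}$. This is the same reduction used in Proposition \ref{q(R)} and in the analogous result for almost complemented rings. The preliminary facts, all of which appear earlier in the paper, are:
\begin{enumerate}[label={\rm (\roman*)}, nolistsep]
\item $\frac{a}{s}\in\fN(q(R))$ if and only if $a\in\fN(R)$.
\item $\frac{a}{s}\in\areg(q(R))$ if and only if $a\in\areg(R)$.
\item $\reg(q(R))\cap R=\reg(R)$, and every element of $q(R)$ is a regular element times an element of $R$.
\end{enumerate}
Fact (ii) comes from the proof of the almost complemented proposition together with (i). Since $\frac{1}{s}$ is a unit of $q(R)$, both $\areg$ and $\fN$ are insensitive to the denominator.

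For 1), I will compute $\eta(q(R))$. Take $\frac{a}{s}$ with $a\in\fN(R)$, so $\frac{a}{s}\in\fN(q(R))$. If $R$ is roughly reduced, there is $t\in\areg(R)$ with $ta=0$. Then $\frac{t}{1}\in\areg(q(R))$ by (ii), and it annihilates $\frac{a}{s}$, so $\frac{a}{s}\in\eta(q(R))$.

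Conversely, suppose $q(R)$ is roughly reduced and $a\in\fN(R)$. Then $\frac{a}{1}$ is killed by some $\frac{t}{u}\in\areg(q(R))$. This gives $ta=0$ in $R$, because $\frac{ta}{u}=0$ means $vta=0$ for some regular $v$, and $v$ can be cancelled. By (ii), $t\in\areg(R)$, so $a\in\eta(R)$. Hence $\eta(R)=\fN(R)$ exactly when $\eta(q(R))=\fN(q(R))$.

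For 2), the forward direction follows the first half of the proof of Proposition \ref{q(R)}. Given $\frac{a}{s}$, choose a rough complement $b$ of $a$ in $R$, so $ab=0$ and $a+b\in\areg(R)$. Then $\frac{b}{s}$ satisfies $\frac{a}{s}\cdot\frac{b}{s}=0$, and $\frac{a+b}{s}\in\areg(q(R))$ by (ii).

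For the converse, take $r\in R$ and a rough complement $\frac{b}{s}$ of $\frac{r}{1}$ in $q(R)$. This gives $rb=0$ and $\frac{sr+b}{s}\in\areg(q(R))$, hence $sr+b\in\areg(R)$. So $sr$ is roughly complemented in $R$ with rough complement $b$. What remains is a version of Lemma \ref{MainLemma}(3) for rough complements: if $s\in\reg(R)$ and $sr$ has rough complement $b$, then $r$ has rough complement $b$.

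I expect this cancellation step to be the main obstacle, and I will prove it by adapting the argument of Lemma \ref{MainLemma}(3).
\begin{itemize}
\item First, $s(rb)=(sr)b=0$, and $s$ is regular, so $rb=0$.
\item Next, suppose $y(r+b)\in\fN(R)$. Then $yr\cdot(r+b)=yr^2$ and $yb\cdot(r+b)=yb^2$, using $rb=0$, so both $yr^2$ and $yb^2$ lie in $\fN(R)$.
\item Expanding, $y(sr+b)^2=s^2yr^2+yb^2\in\fN(R)$.
\item Since $(sr+b)^2\in\areg(R)$, we get $y\in\fN(R)$. Thus $r+b\in\areg(R)$.
\end{itemize}
Combining this cancellation with the converse computation above finishes 2).
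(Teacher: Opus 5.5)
Your proof is correct and is essentially the argument the paper has in mind when it calls this result clear. It follows the template of Proposition \ref{q(R)} and the almost complemented analogue: facts (i)--(ii) transport $\fN$ and $\areg$ across $R\to q(R)$, and your rough-complement version of Lemma \ref{MainLemma}(3) handles the cancellation. That cancellation step could also be obtained by applying Lemma \ref{mlrc} in $q(R)$ with the unit $\frac{s}{1}$, which turns the rough complement $\frac{b}{s}$ of $\frac{r}{1}$ into $\frac{b}{1}$.
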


The following lemma shall be useful and is in the same vein as (3) of Lemma \ref{MainLemma}.

\begin{lemma}\label{mlrc}
Suppose that $a$ and $b$ are rough complements. For any $s\in \areg(R)$, $a$ and $sb$ are rough complements.
\end{lemma}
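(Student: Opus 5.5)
The product condition is immediate: from $ab=0$ we get $a(sb)=s(ab)=0$. So the whole task is to show that $a+sb\in\areg(R)$. I will work in $\ol{R}=R/\fN(R)$. There, being aregular means being regular, $\ol{a}\,\ol{b}=\ol{0}$, $\ol{a}+\ol{b}\in\reg(\ol{R})$, and $\ol{s}\in\reg(\ol{R})$. It therefore suffices to prove a purely reduced statement: if $\ol{a},\ol{b}$ are complementary in $\ol{R}$ and $\ol{s}$ is regular, then $\ol{a}+\ol{s}\,\ol{b}$ is regular.

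The key step is a squaring trick, modelled on the converse direction of (3) of Lemma \ref{MainLemma}. Suppose $y\in R$ satisfies $y(a+sb)\in\fN(R)$. Multiply by $a$. Since $ab=0$, this gives $ya^2\in\fN(R)$. Multiplying instead by $sb$ gives $ys^2b^2\in\fN(R)$, and because $s$ is aregular, $yb^2\in\fN(R)$. Next, $(a+b)^2=a^2+b^2$, again because $ab=0$. Hence $y(a+b)^2\in\fN(R)$. Since $\areg(R)$ is multiplicative, $(a+b)^2\in\areg(R)$, and so $y\in\fN(R)$.

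Finally, I must check that $a+sb\notin\fN(R)$. This is automatic: if $a+sb$ were nilpotent, then taking $y=1$ above would give $1\in\fN(R)$, which is absurd. Alternatively, one can note that the condition defining aregularity already excludes nilpotents.

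I do not expect a genuine obstacle. The only point requiring care is extracting $yb^2\in\fN(R)$ from $ys^2b^2\in\fN(R)$, and this uses only the internal characterization of aregular elements applied to $s^2\in\areg(R)$.
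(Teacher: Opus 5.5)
Your proof is correct and follows essentially the same route as the paper: use $ab=0$ to split off the $a$-part and the $b$-part, recombine them through a power of $a+b$, and then invoke aregularity. The difference is only cosmetic. You work modulo $\fN(R)$ with squares and cancel $s^2$ first, whereas the paper raises to a nilpotency exponent $n$ in $R$ and applies aregularity of the product $(a+b)s$ at the end.
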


\begin{proof}
So we are given that $ab=0$ and $a+b\in\areg(R)$. Let $s\in\areg(R)$. Clearly, $a(sb)=0$. So we show that $a+sb\in \areg(R)$. Let $x\in R$ satisfy $(a+sb)x\in \fN(R)$. Then for some $n\in\ N$, $a^nx^n=s^nb^nx^n$. Multiplying both sides by $a$ yields that $a^{n+1}x^n=0$, and then by $b$ produces $b^{n+1}s^nx^n=0$. Thus, $(a+b)^{n+1}s^{n+1}x^{n+1}=0$ and so $(a+b)sx\in \fN(R)$. By aregularity of $(a+b)s$, we conclude that $x\in\fN(R)$.
\end{proof}

We have chosen the definition of roughly complemented to say that all elements, even the nilpotents, are roughly complemented. We did so to obtain a sense of symmetry with respect to the $\pi$-complemented condition. Recall that a $\pi$-complemented ring is nearly reduced. So, we demonstrate that a roughly complemented ring is roughly reduced.

\begin{lemma}\label{rc}
Let $R$ be a ring and $x\in \fN(R)$. Then $x$ is roughly complemented if and only if $x\in \eta(R)$.
\end{lemma}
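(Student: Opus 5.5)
We prove both directions directly from the definitions, using the description $\eta(R)=\bigcup_{s\in\areg(R)}\Ann(s)$ given in the lemma on $\varphi:R\ra q_a(R)$.

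First suppose $x\in\eta(R)$, and choose $s\in\areg(R)$ with $xs=0$. Take $b=s$ as the candidate rough complement. Then $xb=0$ is immediate. It remains to show $x+s\in\areg(R)$. Since $x$ is nilpotent, $\overline{x+s}=\overline{s}$ in $\overline{R}=R/\fN(R)$. Membership in $\areg(R)$ depends only on the coset modulo $\fN(R)$, and $\overline{s}\in\reg(\overline{R})$. Hence $x+s\in\areg(R)$, and $x$ is roughly complemented.

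Conversely, suppose $x\in\fN(R)$ is roughly complemented. Choose $b$ with $xb=0$ and $x+b\in\areg(R)$. As before, $\overline{b}=\overline{x+b}$ is regular in $\overline{R}$, so $b\in\areg(R)$. Since $xb=0$, we get $x\in\Ann(b)\subseteq\eta(R)$.

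Both directions are essentially immediate. The only point needing care is that $\areg(R)$ is closed under adding nilpotents. This holds because $\areg(R)$ is defined through $\overline{R}$: if $(s+n)a\in\fN(R)$ with $n$ nilpotent, then $sa=(s+n)a-na\in\fN(R)$, so $a\in\fN(R)$. Here $na\in\fN(R)$ because $\fN(R)$ is an ideal, and $s+n\notin\fN(R)$ since $s\notin\fN(R)$. I do not expect any real obstacle.
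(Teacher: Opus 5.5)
Your proof is correct and is essentially the paper's argument. In one direction, nilpotence of $x$ forces the rough complement $b$ to be aregular, so $x\in\Ann(b)\subseteq\eta(R)$; in the other, an aregular annihilator $s$ of $x$ serves as the rough complement, because adding the nilpotent $x$ keeps $x+s$ in $\areg(R)$, a stability step the paper uses without comment and you verify explicitly.
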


\begin{proof}
Suppose that $x$ is roughly complemented and let $b\in R$ satisfy $xb=0$ and $x+b\in\areg(R)$. Since $x\in \fN(R)$ it follows that $b\in \areg(R)$. Therefore, since $bx=0$, $x\in \eta(R)$.

Conversely, if $x\in \eta(R)$, then there is some $b\in \areg(R)$ such that $xb=0$. Since $x\in \fN(R)$, $x+b\in \areg(R)$; whence $x$ is roughly complemented.
\end{proof}

\begin{proposition}
The ring $R$ is almost complemented and roughly reduced if and only if $R$ is roughly complemented.
\end{proposition}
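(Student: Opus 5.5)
The reverse direction is the short one, so I would handle it first. Suppose $R$ is roughly complemented. For $a\in R$, a rough complement $b$ satisfies $ab=0\in\fN(R)$ and $a+b\in\areg(R)$. Hence every element is almost complemented, and $R$ is almost complemented. If $x\in\fN(R)$, then $x$ is roughly complemented, so Lemma \ref{rc} gives $x\in\eta(R)$. Thus $\fN(R)\subseteq\eta(R)$. The reverse containment $\eta(R)\subseteq\fN(R)$ always holds, as observed after the definition of $q_a(R)$. So $\eta(R)=\fN(R)$ and $R$ is roughly reduced.

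For the forward direction, assume $R$ is almost complemented and $\eta(R)=\fN(R)$, and let $a\in R$. By the reformulation following the definition of $\areg(R)$, there is some $b\in R$ with $ab\in\fN(R)$ and $a+b\in\areg(R)$. The task is to replace $b$ by an element that actually annihilates $a$.

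Since $ab\in\fN(R)=\eta(R)$, some power $(ab)^n$ is nilpotent and hence lies in $\eta(R)$. More simply, $ab$ itself lies in $\eta(R)$, so there is an $s\in\areg(R)$ with $sab=0$. I would take $sb$ as the candidate rough complement. Then $a(sb)=0$, and it remains to show $a+sb\in\areg(R)$.

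Suppose $(a+sb)x\in\fN(R)$. Pass to $\overline{R}=R/\fN(R)$, where $\bar a\bar b=0$, $\bar s$ is regular and $\bar a+\bar b$ is regular. Multiplying $(\bar a+\bar s\bar b)\bar x=0$ by $\bar a$ gives $\bar a^2\bar x=0$. Multiplying it by $\bar b$ gives $\bar s\bar b^2\bar x=0$, so $\bar b^2\bar x=0$. Then $(\bar a+\bar b)^2\bar x=(\bar a^2+\bar b^2)\bar x=0$, and regularity of $\bar a+\bar b$ forces $\bar x=0$. Hence $x\in\fN(R)$ and $a+sb\in\areg(R)$.

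This is the same computation as in Lemma \ref{mlrc}, only with $ab\in\fN(R)$ instead of $ab=0$. That lemma needs $ab=0$ as a hypothesis, so I cannot cite it directly; I would redo the argument modulo $\fN(R)$ as above. Once that is done, $a$ and $sb$ are rough complements, and $R$ is roughly complemented.

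The only step needing care is the choice of $s$. It comes from $ab\in\eta(R)$, which holds precisely because $R$ is roughly reduced. The rest is routine bookkeeping modulo the nilradical.
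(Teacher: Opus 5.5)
Your proof is correct and follows the paper's. You use the same $s\in\areg(R)$ with $s(ab)=0$ coming from roughly reducedness, the same candidate rough complement $sb$, and Lemma \ref{rc} for the converse. The one difference is cosmetic: you verify $a+sb\in\areg(R)$ in $\ol{R}=R/\fN(R)$, where it is just part (3) of Lemma \ref{MainLemma} applied to the complementary pair $\bar a,\bar b$ and the regular element $\bar s$. The paper instead does an exponent computation directly in $R$, so your check is cleaner, and it also covers nilpotent $a$ without the paper's restriction to $a\notin\fN(R)$.
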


\begin{proof}
Necessity. Let $a\in R\ssm \fN(R)$. Choose $b\in R$ such that $ab\in\fN(R)$ and $a+b\in\areg(R)$. Let $n\in\N$ satisfy $(ab)^n=0$. Now, since $R$ is roughly reduced and $ab\in\fN(R)$, there is some $s\in\areg(R)$ such that $s(ab)=0$. We claim that $a+sb\in\areg(R)$ from which we can conclude that $a$ is roughly complemented. To that end, let $z\in R$ for which $(a+sb)z\in\fN(R)$. Let $k\in\N$ be so that $(az)^k+(sbz)^k=(az+sbz)^k=0$; whence $a^kz^k=-s^kb^kz^k$. Multiplying both sides by $a$ yields that $a^{k+1}z^k=0$.

Now,
\begin{eqnarray*}
 [(a+b)(sz)]^{k+1}  &=& (asz+bsz)^{k+1} \\
   &=& (asz)^{k+1}+(bsz)^{k+1} \\
   &=& 0.
\end{eqnarray*}

Since $a+b\in\areg(R)$ it follows that $sz\in\fN(R)$. Since $s\in\areg(R)$, $z\in\fN(R)$.
\vspace{.1in}

Sufficiency. Obviously if $R$ is roughly complemented, then $R$ is almost complemented. A consequence of Lemma \ref{rc} is that a roughly complemented ring is roughly reduced.
\end{proof}
\vspace{.1in}

Turning to $T=S+I$ rings we consider the case when $I$ is nil.

\begin{proposition}
Suppose $T=S+I$ with $I$ a nil ideal. Then, $T=S+I$ is roughly complemented if and only if for each $s\in S$ and $i\in I$, there are $t\in S$, $j\in I$ such that $st=0$, $s+t\in\areg(S)$, and $(s+i)j=it$. In particular, if $T$ is roughly complemented, then $S$ is roughly complemented.
\end{proposition}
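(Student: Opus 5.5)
The plan is to unwind the definition of a roughly complemented element of $T$ using the unique decomposition $T=S\oplus I$ (as additive groups). The criterion for aregularity in $T$ is supplied by the unlabeled lemma preceding the last proposition of the nil-ideal subsection, which says that for $I$ nil, $s+i\in\areg(T)$ if and only if $s\in\areg(S)$. The key observation is that both conditions in the definition of rough complement, $ab=0$ and $a+b\in\areg(T)$, split cleanly into an $S$-part and an $I$-part.

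Fix $s\in S$ and $i\in I$, and let $b=t'+j'$ with $t'\in S$, $j'\in I$ be an arbitrary element of $T$. Since $S$ is a subring and $I$ is an ideal, we have
$$(s+i)(t'+j')=st'+(sj'+it'+ij'),$$
where $st'\in S$ and $sj'+it'+ij'=(s+i)j'+it'\in I$. By uniqueness of the representation, $(s+i)b=0$ holds if and only if both $st'=0$ and $(s+i)j'=-it'$. Similarly, $(s+i)+b=(s+t')+(i+j')$ has $S$-component $s+t'$. By the lemma just cited, $(s+i)+b\in\areg(T)$ if and only if $s+t'\in\areg(S)$.

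Now set $t=t'$ and $j=-j'$, which is a bijective change of variables on $S\times I$. Then $s+i$ has a rough complement in $T$ if and only if there exist $t\in S$ and $j\in I$ with $st=0$, $s+t\in\areg(S)$ and $(s+i)j=it$. Since every element of $T$ has the form $s+i$, this gives the stated equivalence.

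For the ``in particular'' clause, apply the criterion with $i=0$. This yields, for every $s\in S$, some $t\in S$ with $st=0$ and $s+t\in\areg(S)$, so every $s$ is roughly complemented in $S$. The $j$-condition is then satisfied trivially by $j=0$.

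There is no serious obstacle here. The only point requiring care is the sign bookkeeping in the $I$-component and the reliance on the aregularity lemma, which the paper leaves to the reader. If that lemma needs justification, I would use two facts: $T/\fN(T)\cong S/\fN(S)$ because $I$ is nil, and $s+i\equiv s$ modulo $\fN(T)$. Together these give $\overline{s+i}\in\reg(\overline T)$ if and only if $\bar s\in\reg(\overline S)$.
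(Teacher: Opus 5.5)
Your proof is correct and follows essentially the same route as the paper. You split $(s+i)(t'+j')=0$ into its $S$- and $I$-components using $S\cap I=\{0\}$, read aregularity off the $S$-component $s+t'$ via the nil-ideal lemma, and replace $j'$ by $-j'$. Your explicit sign change also gets the converse right where the paper's sketch slips: it writes $s+t\in\reg(S)$ and claims $(s+i)(t+j)=0$, whereas with $(s+i)j=it$ the rough complement of $s+i$ is $t-j$, exactly as your substitution $j=-j'$ gives.
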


\begin{proof}
The second claim clearly follows from the first.
\vspace{.1in}

Suppose that $T$ is roughly complemented and let $s\in S$. Then there is some $t+j\in T$ such that $s(t+j)=0$ and $s+t+j\in\areg(T)$. Now, since $S\cap I=\{0\}$, it follows that $st=-sj=0$.  Since $j\in\fN(T)$, $s+t\in\areg(T)$ and hence $s+t\in\areg(S)$. Therefore, $S$ is roughly complemented.

Let $s\in S$, $i\in I$. By hypothesis, there is a $t\in S$, $j\in I$ such that $(s+i)(t+j)=0$ and $s+i+t+j\in\areg(T)$. It follows that $s+t\in \areg(S)$. Also, $0=-st=sj+it+ij$. Thus, the elements $t\in S$, $-j\in I$ satisfy the desired conditions.
\vspace{.1in}

Conversely, for $s+i\in T$, there are $t\in S$, $j\in I$ such that $s+t\in\reg(S)$ and $(s+i)=it$. We leave it to the interested reader to show that $(s+i)(t+j)=0$ and $s+i+t+j\in \areg(T)$.
\end{proof}

In the case of the trivial extension we can say more.  Recall that the $R$-module $M$ is called {\bf torsion} if for every $m\in M$ there is some $r\in \reg(R)$ such that $rm=0$. We say that $M$ is {\bf atorsion} if for each $m\in M$ there is some $r\in\areg(R)$ such that $rm=0$.

\begin{proposition}
Suppose $T=S+I$ with $I$ a nil ideal of $T$. If $S$ is roughly complemented and $I$ is an atorsion $S$-module, then $T$ is roughly complemented. Furthermore, if $I^2=0$, then the converse holds. In particular, $R\propto M$ is roughly complemented if and only if $R$ is roughly complemented and $M$ is an atorsion-module.
\end{proposition}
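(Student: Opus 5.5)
Throughout I use the Lemma preceding the Proposition on almost complemented $S+I$: since $I$ is nil, $s+i\in\areg(T)$ iff $s\in\areg(S)$.

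\emph{Forward direction.} Let $S$ be roughly complemented and $I$ atorsion, and take $x=s+i\in T$. Choose $t\in S$ with $st=0$ and $s+t\in\areg(S)$, and choose $u\in\areg(S)$ with $ui=0$. I propose $tu$ as a rough complement of $s+i$. First, $(s+i)tu=stu+itu=0+t(iu)=0$. By Lemma \ref{mlrc} applied inside $S$ (with $s,t$ rough complements and $u\in\areg(S)$), $s+tu\in\areg(S)$. Since $i$ is nilpotent, $s+i+tu\in\areg(T)$. The only delicate point is making the complement kill the nilpotent part $i$, and the atorsion hypothesis together with Lemma \ref{mlrc} handles exactly this.

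\emph{Converse when $I^2=0$.} By the preceding Proposition, $T$ roughly complemented already gives that $S$ is roughly complemented. For atorsion, take $i\in I$. Since $i\in\fN(T)$ and $T$ is roughly complemented, Lemma \ref{rc} gives $i\in\eta(T)$, so $i(t+j)=0$ for some $t+j\in\areg(T)$ with $t\in S$, $j\in I$. Then $t\in\areg(S)$, and $it=-ij=0$ because $I^2=0$. Thus $i$ is annihilated by an aregular element of $S$.

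The statement for $R\propto M$ is the special case $S=R$, $I=M$, where $I^2=0$. I expect the forward construction to be the main step. The converse is short once Lemma \ref{rc} is invoked, and the place where $I^2=0$ is needed is the step $ij=0$.
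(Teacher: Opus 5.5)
Your proposal is correct and matches the paper's proof essentially step for step. In the forward direction you use the same rough complement $tu$, built from Lemma~\ref{mlrc} and an aregular annihilator $u$ of $i$. In the converse, your appeal to Lemma~\ref{rc} simply packages the paper's direct argument: a rough complement $s+j$ of $i$ has $s\in\areg(S)$, and $si=-ji=0$ because $I^2=0$. The remaining difference is cosmetic: you cite the preceding proposition for $S$ being roughly complemented, where the paper just remarks that this is easily seen.
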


\begin{proof}
Suppose $S$ is roughly complemented and $I$ is an atorsion $S$-module. Let $s+i\in T$. Choose $r\in S$ such that $rs=0$ and $s+r\in\areg(S)$. Choose $u\in \areg(S)$ such that $ui=0$. We know that $s+ur\in \areg(S)$ by Lemma \ref{mlrc}, hence $s+ur\in\areg(T)$ by Lemma \ref{aregnil}. Furthermore, $(s+i)+ur\in \areg(T)$. Since $(s+i)(ur)=0$ it follows that $T$ is roughly complemented.
\vspace{.1in}

Next, assume that $I^2=0$ and that $T$ is roughly complemented. Let $i\in I$. There is some $s+j\in T$ such that $(s+j)i=0$ and $s+j+i\in\areg(T)$. Since $i,j\in\fN(T)$, $s\in \areg(T)$, and so $s\in\areg(S)$. Also, observe that $si=ji=0$, by hypothesis. Therefore, $I$ is an atorsion $S$-module. Moreover, one can easily see that $S$ is roughly complemented.
\vspace{.1in}

\end{proof}

\begin{example}
Let $R=\Z\propto (\Z\times \Z_5)$. Since $\Z\times \Z_5$ as a $\Z$-module is neither a torsion nor torsion-free, it follows that $R$ is almost complemented and neither roughly complemented nor $\pi$-complemented.
\end{example}

\begin{example}
Let $R$ be a roughly complemented ring and $M$ a torsion $R$-module. Then $S=R\propto M$ is a roughly complemented ring. Observe that since $M$ is an ideal of $S$, the ring $R \cong (R\propto M)/M)$ is an $S$-module, and for any ideal $J$ of $R$, $J$ is also an $S$-module, whence so is $R/J$.
\vspace{.1in}

So let $R=\Z$, $M=\Z_5$, and $S=\Z\propto \Z_5$. Observe that $\Z_5$ is an $S$-module; it is an atorsion $S$-module that is not a torsion $S$-module. Moreover, it follows that $S\propto \Z_5$ is roughly complemented.

\end{example}
\vspace{.3in}

\section{Final Thoughts and Some Questions}
\begin{example}
In general, the homomorphic image of a complemented ring need not even be reduced let alone complemented. The same can be said for the semi-complemented condition. Consider the ring $\Z_{12}\cong \Z_3\times \Z_4$. Since this ring is not reduced yet contains non-trivial idempotents, it is not semi-complemented. But it is almost complemented.
Interestingly, the question of whether a homomorphic image of an almost complemented ring is again almost complemented leads to the situation of starting with a complemented ring $R$ and trying to determine whether $R/I$ is complemented for each radical ideal $I$. Observe that the integers $R=\Z$ are a ring with this property. We do not know the answer in general and leave this for another time.

\end{example}
\vspace{.3in}
{\bf Questions}
\begin{enumerate}[label={\rm \arabic*.}, nolistsep]
\item Is the homomorphic image of an almost complemented ring again almost complemented. This is equivalent to asking whether a reduced homomorphic image of a complemented ring is complemented.
\item If $R$ is complemented, then is $R[[x]]$ also complemented?
\item When is an infinite product of rings almost complemented or roughly complemented?
\item When is $R[[x]]$ semi-complemented?
\end{enumerate}
\vspace{.1in}

{\bf Acknowledgement} This work was partially supported by a grant from NSERC of Canada (\#2022-03783 to Yiqiang Zhou). We would like to thank James Branca on his help with setting up some of the diagrams used in the article.

\end{document}